\font\msbm=msbm10
\numberwithin{equation}{section}
\theoremstyle{plain}
\newtheorem{theorem}{Theorem}[section]
\newtheorem{lemma}[theorem]{Lemma}
\newtheorem{condition}[theorem]{Condition}
\newtheorem{proposition}[theorem]{Proposition}
\newtheorem{remark}[theorem]{Remark}
\def\mathbb#1{\hbox{\msbm{#1}}}
\newcommand{\bu}{\boldsymbol{u}}
\newcommand{\bv}{\boldsymbol{v}}
\newcommand{\BA}{\boldsymbol{A}}
\newcommand{\BC}{\boldsymbol{C}}
\newcommand{\BG}{\boldsymbol{G}}
\newcommand{\BJ}{\boldsymbol{J}}
\newcommand{\BO}{\boldsymbol{O}}
\newcommand{\BQ}{\boldsymbol{Q}}
\newcommand{\BR}{\boldsymbol{R}}
\newcommand{\BS}{\boldsymbol{S}}
\newcommand{\BU}{\boldsymbol{U}}
\newcommand{\BV}{\boldsymbol{V}}
\newcommand{\BW}{\boldsymbol{W}}
\newcommand{\BX}{\boldsymbol{X}}
\newcommand{\BY}{\boldsymbol{Y}}
\newcommand{\BZ}{\boldsymbol{Z}}
\newcommand{\BPi}{\boldsymbol{\Pi}}
\newcommand{\BDelta}{\boldsymbol{\Delta}}
\newcommand{\BPhi}{\boldsymbol{\Phi}}
\newcommand{\bvarphi}{\boldsymbol{\varphi}}
\newcommand{\BLambda}{\boldsymbol{\Lambda}}
\newcommand{\BSigma}{\boldsymbol{\Sigma}}
\newcommand{\PP}{\mathcal{P}}
\newcommand{\pa}{\partial}
\newcommand{\I}{\boldsymbol{I}}
\newcommand{\RR}{\mathbb{R}}
\newcommand{\lag}{\langle}
\newcommand{\rag}{\rangle}
\newcommand{\eps}{\epsilon}
\DeclareMathOperator{\Tr}{Tr}
\DeclareMathOperator{\mi}{\mathrm{i}}
\DeclareMathOperator{\E}{\mathbb{E}}
\DeclareMathOperator{\diag}{diag}
\DeclareMathOperator{\Null}{null}
\DeclareMathOperator{\St}{St}
\DeclareMathOperator{\op}{op}
\DeclareMathOperator{\rank}{rank}
\DeclareMathOperator{\blkdiag}{blkdiag}
\long\def\\#1//{}
\definecolor{xl}{RGB}{200,50,120}
\begin{document}
\title{\bf Solving Orthogonal Group Synchronization \\ via Convex and Low-Rank Optimization: \\ Tightness and Landscape Analysis}
\author{Shuyang Ling\thanks{New York University Shanghai (Email: sl3635@nyu.edu)}}

\maketitle

\begin{abstract}
Group synchronization aims to recover the group elements from their noisy pairwise measurements. It has found many applications in community detection, clock synchronization, and joint alignment problem. This paper focuses on the orthogonal group synchronization which is often used in cryo-EM and computer vision.  However, it is generally NP-hard to retrieve the group elements by finding the least squares estimator. In this work, we first study the semidefinite programming (SDP)  relaxation of the orthogonal group synchronization and  its tightness, i.e., the SDP estimator is exactly equal to the least squares estimator. Moreover, we investigate the performance of the Burer-Monteiro factorization in solving the SDP relaxation by analyzing its corresponding optimization landscape. 
We provide deterministic sufficient conditions which guarantee: (i) the tightness of SDP relaxation; (ii) optimization landscape arising from the Burer-Monteiro approach is benign, i.e.,  the global optimum is exactly the least squares estimator and no other spurious local optima exist. Our result provides a solid theoretical justification of why the Burer-Monteiro approach is remarkably efficient and effective in solving the large-scale SDPs arising from orthogonal group synchronization. We perform numerical experiments to complement our theoretical analysis, which gives insights into future research directions.
\end{abstract}

\section{Introduction} 

Group synchronization requires to recover the group elements $\{g_i\}_{i=1}^n$ from their partial pairwise measurements:
\[
g_{ij} = g_i^{-1}g_j + w_{ij}, \quad (i,j)\in {\cal E}
\]
where $g_i$ belongs to a given group ${\cal G}$, $w_{ij}$ is the noise, and ${\cal E}$ is the edge set of an underlying network. Depending on the specific group choices, one has found many interesting problems including $\mathbb{Z}_2$-synchronization~\cite{ABBS14}, angular synchronization~\cite{S11,BBS17}, and permutation group~\cite{PKS13}, special orthogonal group~\cite{SS11, AKKSB12}, orthogonal group~\cite{MMMO17},  cyclic group~\cite{CC18}, and real number addition group~\cite{GK06}.

\begin{figure}[h!]
\centering
\includegraphics[width=80mm]{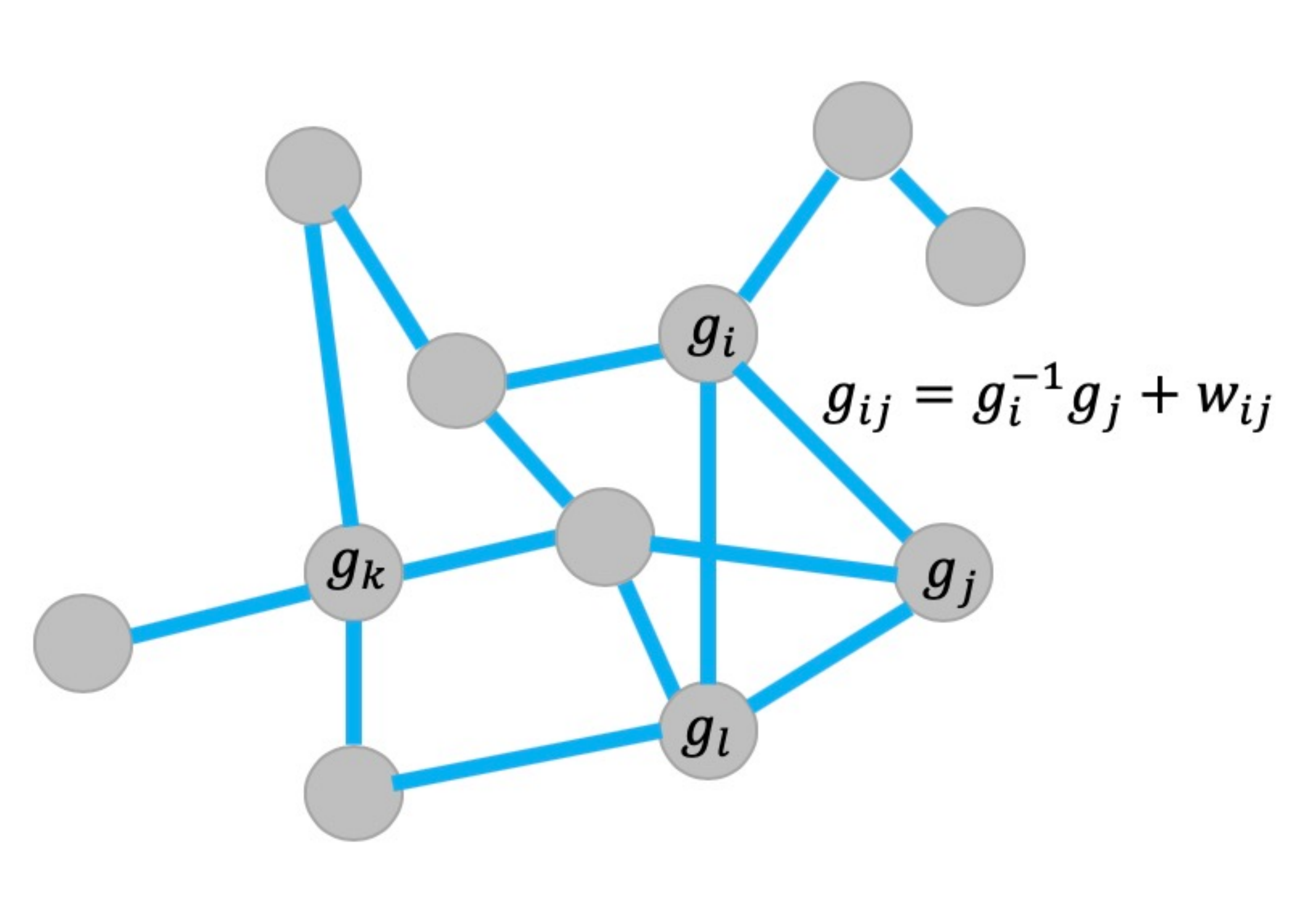}
\caption{Illustration of group synchronization on networks}
\end{figure}

Optimization plays a crucial role in solving group synchronization problems. One commonly used approach is the least squares method. 
However, the least squares objective arising from many of these aforementioned examples are usually highly nonconvex or even inherently discrete. This has posed a major challenge to retrieve the group elements from their highly noisy pairwise measurements because finding the least squares estimator is NP-hard in general. In the recent few years, many efforts are devoted to finding spectral relaxation and convex relaxation (in particular semidefinite relaxation) as well as nonconvex approaches to solve these otherwise NP-hard problems. 
 
In this work, we focus on the general orthogonal synchronization problem:
\begin{equation*}
\BA_{ij} = \BG_i\BG_j^{\top} + \text{noise}
\end{equation*}
where $\BG_i$ is a $d\times d$ orthogonal matrix belonging to
\begin{equation}\label{def:OD}
O(d) : = \{ \BO \in\RR^{d\times d}:  \BO^{\top}\BO =  \BO\BO^{\top}  = \I_d\}.
\end{equation}
Orthogonal group synchronization is frequently found in cryo-EM~\cite{SS11}, computer vision~\cite{AKKSB12} and feature matching problem~\cite{GK06}, and is a natural generalization of $\mathbb{Z}_2$- and angular synchronization~\cite{ABBS14,S11}. We aim to establish a theoretical framework to understand when convex and nonconvex approaches can retrieve the orthogonal group elements from the noisy measurements. In particular, we will answer the following two core questions.
\begin{itemize}
\item For the convex relaxation of orthogonal group synchronization, 
\begin{center}
\emph{When is the convex relaxation tight?}
\end{center}
Convex relaxation has proven to be extremely powerful in approximating or giving the exact solution to many otherwise NP-hard problems under certain conditions. However, it is not always the case that the relaxation yields a solution that matches the least squares estimator. Thus we aim to answer when one can find the least squares estimator of $O(d)$ synchronization with a simple convex program.

\item For the nonconvex Burer-Monteiro approach~\cite{BM03,BM05}, we are interested in answering this question:
\begin{center}
\emph{When does the Burer-Monteiro approach yield a benign optimization landscape?}
\end{center}
Empirical evidence has indicated that the Burer-Monteiro approach works extremely well to solve large-scale SDPs in many applications despite its inherent nonconvexity. One way to provide a theoretical justification is to show that the optimization landscape is~\emph{benign}, i.e., there is only one global minimizer and no other~\emph{spurious} local minima exist.

\end{itemize}

\subsection{Related works and our contribution}
Group synchronization problem is a rich source of many mathematical problems. 
Now we will give a review of the related works which inspire this work.
Table~\ref{tab:sync} provides a non-exhaustive summary of important examples in group synchronization with applications. 

\begin{table}[h!]
\centering
\begin{tabular}{c|c}
\hline
Group & Application \\
\hline
$\mathbb{Z}_2 = \{1,-1\}$ & Community detection and node label recovery~\cite{ABBS14,BVB16} \\ 
\hline
Cyclic group $\mathbb{Z}_n = \mathbb{Z}/n\mathbb{Z}$ & Joint alignment from pairwise differences~\cite{CC18} \\
\hline
$U(1) = \{e^{\mi\theta}: \theta\in [0,2\pi)\}$ & Angular synchronization~\cite{S11,BBS17,ZB18}, Phase retrieval~\cite{IPSV20} \\
\hline
Permutation group & Feature matching~\cite{PKS13,HG13} \\
\hline
SO(3): special orthogonal group & Cryo-EM~\cite{S18,SS11} and computer vision~\cite{AKKSB12} \\
\hline
Addition group on a finite interval & Clock synchronization on networks~\cite{GK06}\\
\hline
\end{tabular}
\caption{Examples of group synchronization and applications}
\label{tab:sync}
\end{table}

The most commonly used approach in group synchronization is to find the least squares estimator. As pointed out earlier, finding the least squares estimator of general $O(d)$ synchronization is NP-hard. In fact, if the group is $\mathbb{Z}_2$, the least squares objective function is closely related to the graph MaxCut problem, which is a well-known NP-hard problem.
Therefore, alternative methods are often needed to tackle this situation. One line of research focuses on the spectral and semidefinite programming relaxation (SDP) including $\mathbb{Z}_2$-synchronization~\cite{ABBS14}, angular synchronization~\cite{S11,BBS17,ZB18}, orthogonal group~\cite{SS11,AKKSB12,CKS15}, permutation group~\cite{PKS13,HG13}. Regarding the SDP relaxation, many efforts are devoted to designing approximation algorithms, such as the Goemans-Williamson relaxation~\cite{GW95} for graph MaxCut.  Inspired by the development of compressive sensing and low-rank recovery~\cite{CRT06, RFP10}, we are more interested in the tightness of convex relaxation: the data are not as adversarial as expected in some seemingly NP-hard problems. Convex relaxation in these problems admits the exact solution to the original NP-hard problem under certain conditions. Following this idea,~\cite{ABBS14} studies the SDP relaxation of $\mathbb{Z}_2$-synchronization with corrupted data; the tightness of SDP for angular synchronization is first investigated in~\cite{BBS17} and a near-optimal performance bound is obtained in~\cite{ZB18}; a very recent work~\cite{Z19} gives a sufficient condition to ensure the tightness of SDP for general orthogonal group synchronization. 
 
Despite the effectiveness of convex approaches, they are not scalable because solving large-scale SDPs are usually very expensive~\cite{N13}. It is more advantageous to keep the low-rank structure and obtain a much more efficient algorithm instead of solving large-scale SDPs directly. As a result, there is a growing interest in developing nonconvex approaches, particularly the first-order gradient-based algorithm. These methods enjoy the advantage of higher efficiency than the convex approach. However, there are also concerns about the possible existence of multiple local optima which prevent the iteration from converging to the global one. The recent few years have witnessed a surge of research in exploring fast and provably convergent nonconvex optimization approaches. Two main strategies are: (i) design a smart initialization scheme and then provide the global convergence; (ii) analyze the nonconvex optimization landscape. Examples include phase retrieval~\cite{CLS15,SQW18}, dictionary learning~\cite{SQW16}, joint alignment~\cite{CC18}, matrix completion~\cite{KOM09,GJZ17} and spiked tensor model~\cite{BMMN19}. 
These ideas are also applied to several group synchronization problems. 
Orthogonal group synchronization can be first formulated as a low-rank optimization program with orthogonality constraints and then tackled with many general solvers~\cite{B15, GLCY18, WY13}. The works~\cite{CC18} on joint alignment and~\cite{ZB18} on angular synchronization follow two-step procedures: first, use the spectral method for a good initialization and then show that the projected power methods have the property of global linear convergence.

Our focus here is on the optimization landscape of the Burer-Monteiro approach~\cite{BM03, BM05} in solving the large SDPs arising from $O(d)$ synchronization. The original remarkable work~\cite{BM05} by Burer and Monteiro shows that as long as $p(p+1)>2n$ where $p$ is the dimension of low-rank matrix and $n$ is the number of constraints, the global optima to Burer-Monteiro factorization match those of the corresponding SDP by using the idea from~\cite{P98}. Later on,~\cite{BVB16, BVB20} show that the optimization landscape is benign, meaning that no spurious local optima exist in the nonconvex objective function if $p$ is approximately greater than $\sqrt{2n}$. This bound is proven to be almost tight in~\cite{WW18}. On the other hand, it is widely believed that even if $p = O(1),$ the Burer-Monteiro factorization works provably, which is supported by many numerical experiments. 
We have benefitted greatly from the works regarding the Burer-Monteiro approach on group synchronization in~\cite{BBV16, X19, MMMO17}. In~\cite{BBV16}, the authors prove that the optimization landscape is benign for $\mathbb{Z}_2$-synchronization as well as community detection under the stochastic block model if $p = 2$. The optimization landscape of angular synchronization is studied in~\cite{X19}. 
The work~\cite{MMMO17} provides a lower bound for the objective function value evaluated at local optima. The bound depends on the rank $p$ and is smartly derived by using the Riemannian Hessian. However, the landscape and the tightness of the Burer-Monteiro approach for $O(d)$ synchronization have not been fully addressed yet, which becomes one main motivation for this work.
It is worth noting that the Burer-Monteiro approach is closely related to the synchronization of oscillators on manifold~\cite{LXB19, MTG17, MTG20}. The analysis of the optimization landscape of the Burer-Monteiro approach in $\mathbb{Z}_2$ with $p=2$ is equivalent to exploring the stable equilibria of the energy landscape associated with the homogeneous Kuramoto oscillators~\cite{LXB19}. This connection is also reflected in the synchronization of coupled oscillators on more general manifolds such as $n$-sphere and Stiefel manifold~\cite{MTG17, MTG20} on arbitrary complex networks.

An important problem regarding the tightness of convex relaxation and landscape analysis is how these two properties depend on the general notion of SNR (signal-to-noise ratio). In most cases, if the noise is rather small compared to the planted signal, optimization methods should easily recover the hidden signal since the tightness of SDP and the benign landscape are guaranteed. However, as the noise strengthens, the landscape becomes bumpy and optimizing the cost function becomes challenging. This leads to the research topic on detecting the critical threshold for this phase transition. Examples can be found in many applications including eigenvectors estimation~\cite{BBP05} and the community detection under the stochastic block model~\cite{A17}. For $\mathbb{Z}_2$- and angular synchronization, convex methods are tight all the way to the information-theoretical limit~\cite{B18, ZB18} but the analysis of optimization landscape remains suboptimal~\cite{BBV16, X19}. Our work on $O(d)$ synchronization will follow a similar idea and attempt to explore the critical threshold to ensure the tightness of convex relaxation and the Burer-Monteiro approach.

Our contribution is multifold. First, we prove the tightness of convex relaxation in solving the $O(d)$ synchronization problem by extending the work~\cite{BBS17, BBV16} on $\mathbb{Z}_2$- and angular synchronization. We propose a deterministic sufficient condition that guarantees the tightness of SDP and easily applies to other noise models. Our result slightly improves the very recent result on the tightness of $O(d)$ synchronization in~\cite{Z19}. Moreover, we analyze the optimization landscape arising from the Burer-Monteiro approach applied to $O(d)$ synchronization. For this low-rank optimization approach, we also provide a general deterministic condition to ensure a benign optimization landscape. 
The sufficient condition is quite general and applicable to several aforementioned examples such as $\mathbb{Z}_2$- and angular synchronization~\cite{BBV16, X19}, and permutation group~\cite{PKS13}, and achieve the state-of-the-art results. Our result on the landscape analysis serves as another example to demonstrate the great success of the Burer-Monteiro approaches in solving large-scale SDPs.

\subsection{Organization of this paper}
The paper proceeds as follows: Section~\ref{s:prelim} introduces the background of orthogonal group synchronization and optimization methods. We show the main results in Section~\ref{s:main}. Section~\ref{s:numerics} focuses on numerical experiments and we give the proofs in Section~\ref{s:proof}.

\subsection{Notation}
For any given matrix $\BX$, $\BX^{\top}$ is the transpose of $\BX$; $\BX\succeq 0$ means $\BX$ is positive semidefinite.
 Denote $\|\BX\|_{\op}$ the operator norm of $\BX$, $\|\BX\|_F$ the Frobenius norm, and $\|\BX\|_*$ the nuclear norm, i.e., the sum of singular values. For two matrices $\BX$ and $\BZ$ of the same size, $\BX\circ \BZ$ denotes the Hadamard product of $\BX$ and $\BZ$, i.e., $(\BX\circ \BZ)_{ij} = X_{ij}Z_{ij}$; $\lag \BX,\BZ\rag : = \Tr(\BX\BZ^{\top})$ is the inner product. ``$\otimes$" stands for the Kronecker product; $\diag(\bv)$ gives a diagonal matrix whose diagonal entries equal $\bv$; $\blkdiag(\BPi_{11},\cdots,\BPi_{nn})$ denotes a block-diagonal matrix whose diagonal blocks are $\BPi_{ii}$, $1\leq i\leq n.$
Let $\I_n$ be the $n\times n$ identity matrix, and $\BJ_n$ be an $n\times n$  matrix whose entries are 1. We denote $\BX\succeq \BZ$ if $\BX - \BZ\succeq 0$, i.e., $\BX-\BZ$ is positive semidefinite. We write $f(n)\lesssim g(n)$ for two positive functions $f(n)$ and $g(n)$ if there exists an absolute positive constant $C$ such that $f(n) \leq C g(n)$ for all $n.$

\section{Preliminaries}\label{s:prelim}

\subsection{The model of group synchronization}
We introduce the model for orthogonal group synchronization. We want to estimate $n$ matrices $\BG_1,\cdots,\BG_n \in O(d)$ from their pairwise measurements $\BA_{ij}$:
\begin{equation}\label{eq:model}
\BA_{ij} = \BG_i\BG_j^{-1} + \BDelta_{ij}, 
\end{equation}
where $\BA_{ij}\in\RR^{d\times d}$ is the observed data and $\BDelta_{ij}\in\RR^{d\times d}$ is the noise. 
Note that $\BG_i^{-1} = \BG_i$ holds for any orthogonal matrix $\BG_i$. Thus in the matrix form, we can reformulate the observed data $\BA$ as
\[
\BA= \BG\BG^{\top} + \BDelta \in\RR^{nd\times nd}
\]
where $\BG^{\top} = [\BG_1^{\top},\cdots,\BG_n^{\top}]\in\RR^{d\times nd}$ and the $(i,j)$-block of $\BA$ is $\BA_{ij} = \BG_i\BG_j^{\top} + \BDelta_{ij}.$ In particular, we set $\BA_{ii} = \I_d$ and $\BDelta_{ii} = 0.$

One benchmark noise model is the group synchronization from measurements corrupted with Gaussian noise, i.e.,
\[
\BA_{ij} = \BG_i\BG_j^{\top} + \sigma\BW_{ij}
\]
where each entry in $\BW_{ij}$ is an i.i.d. standard Gaussian random variable and $\BW_{ij}^{\top} = \BW_{ji}.$
The corresponding matrix form is
\[
\BA = \BG\BG^{\top} + \sigma\BW\in\RR^{nd\times nd}
\]
which is actually a matrix spike model.

One common approach to recover $\BG$ is to minimize the nonlinear least squares objective function over the orthogonal group $O(d)$:
\begin{equation}\label{eq:obj}
\min_{\BR_i\in O(d)} \sum_{i=1}^n\sum_{j=1}^n\| \BR_i \BR_j^{\top} - \BA_{ij}  \|_F^2.
\end{equation}
In fact, the global minimizer equals the maximum likelihood estimator of~\eqref{eq:model} under Gaussian noise, i.e., assuming each $\BDelta_{ij}$ is an independent Gaussian random matrix.

Throughout our discussion, we will deal with a more convenient equivalent form. More precisely, we perform a change of variable:
\begin{align*}
\| \BR_i \BR_j^{\top} - \BA_{ij}  \|_F^2 & = \| \BR_i\BR_j^{\top} - (\BG_i\BG_j^{\top} +  \BDelta_{ij}) \|^2_F \\
& = \| \BG_i^{\top}\BR_i \BR_j^{\top}\BG_j- (\I_d + \BG_i^{\top} \BDelta_{ij} \BG_j )  \|_F^2
\end{align*}
where $\BR_i$ and $\BG_i\in O(d)$.
By letting
\begin{equation}\label{eq:varchange}
\BR_i \leftarrow  \BG_i^{\top}\BR_i, \quad \BDelta_{ij} \leftarrow \BG_i^{\top} \BDelta_{ij} \BG_j
\end{equation}
then the updated objective function becomes
\[
\sum_{i=1}^n\sum_{j=1}^n\| \BR_i\BR_j^{\top} - (\I_d + \BDelta_{ij}) \|_F^2.
\]
Its global minimizer equals the global maximizer to 
\begin{equation}\label{def:OD}
\max_{\BR_i\in O(d)} \sum_{i=1}^n\sum_{j=1}^n \lag\BR_i\BR_j^{\top}, \I_d + \BDelta_{ij} \rag  \tag{P}
\end{equation}
The program (P) is a well-known NP-hard problem.
We will focus on solving (P) by convex relaxation and low-rank optimization approach, and study their theoretical guarantees.

\subsection{Convex relaxation}

The convex relaxation relies on the idea of lifting: let $\BX = \BR\BR^{\top} \in\RR^{nd\times nd}$ with $\BX_{ij} = \BR_i\BR_j^{\top}$. We notice that $\BX\succeq 0$ and $\BX_{ii} = \I_d$ hold for any $\{\BR_i\}_{i=1}^n\in O(d).$ 
The convex relaxation of $O(d)$ synchronization is 
\begin{equation}\label{def:CVX}
\max_{\BX\in\RR^{nd\times nd}}~\lag \BA, \BX\rag \quad \text{such that} \quad \BX_{ii} = \I_d, \quad \BX\succeq 0  \tag{SDP}
\end{equation}
where $(i,j)$-block of $\BA$ is $\BA_{ij} =  \I_d + \BDelta_{ij}\in\RR^{d\times d}.$
In particular, if $d= 1$, this semidefinite programming (SDP) relaxation reduces to the famous Goemans-Williamson relaxation for the graph MaxCut problem~\cite{GW95}.
Since we relax the constraint, it is not necessarily the case that the global maximizer $\widehat{\BX}$ to (SDP) is exactly rank-$d$, i.e., $\widehat{\BX} = \widehat{\BG}\widehat{\BG}^{\top}$ for some $\widehat{\BG}\in\RR^{nd\times d}$ with $\widehat{\BG}_i\in O(d).$ Our goal is to study~\emph{the tightness of this SDP relaxation}: when the solution to~\eqref{def:CVX} is exactly rank-$d$, i.e., the convex relaxation gives the global optimal solution to~\eqref{def:OD} which is also the least squares estimator.

\subsection{Low-rank optimization: Burer-Monteiro approach}
Note that solving the convex relaxation~\eqref{def:CVX} is extremely expensive especially for large $d$ and $n$. Thus an efficient, robust, and provably convergent optimization algorithm is always in great need. 
Since the solution is usually low-rank in many empirical experiments, it is appealing to take advantage of this property: keep the iterates low-rank and perform first-order gradient-based approach to solve this otherwise computationally expensive SDP. In particular, we will resort to the Burer-Monteiro approach~\cite{BM03,BM05} to deal with the orthogonal group synchronization problem. The core idea of the Burer-Monteiro approach is keeping $\BX$ in a factorized form and taking advantage of its low-rank property. Recall the constraints in~\eqref{def:CVX} read $\BX\succeq 0$ and $\BX_{ii} = \I_d$. In the Burer-Monteiro approach, we let $\BX = \BS\BS^{\top}$ where $\BS\in\RR^{nd\times p}$ with $p > d$. We hope to recover the group elements by maximizing $f(\BS)$:
\begin{equation}\label{def:BM}
f(\BS) : = \lag \BA, \BS\BS^{\top}\rag =  \sum_{i=1}^n\sum_{j=1}^n \lag \BA_{ij}, \BS_i\BS_j^{\top} \rag, \tag{BM}
\end{equation}
where 
\[
\BS^{\top} := [\BS_1^{\top}, \cdots, 
\BS_n^{\top} ]\in\RR^{p\times nd}, \quad \BS_i\BS_i^{\top} = \I_d.
\]
In other words, we substitute $\BR_i$ in~\eqref{def:OD} by a partial orthogonal matrix $\BS_i\in\RR^{d\times p}$ with $\BS_i\BS_i^{\top} = \I_d$ and $ p > d$. Therefore,
$\BS$ belongs to the product space of Stiefel manifold, i.e., $\St(d,p)^{\otimes n}:=\underbrace{\St(d,p)\times \cdots \times \St(d,p)}_{n \text{ times}}$
\begin{equation}\label{def:St}
\St(d,p) : = \{\BS_i\in\RR^{d\times p}: \BS_i\BS_i^{\top} = \I_d\}.
\end{equation}
Running projected gradient method on this objective function~\eqref{def:BM} definitely saves a large amount of computational resources and memory storage. However, the major issue here is the nonconvexity\footnote{Here we are actually referring to the nonconvexity of $-f(\BS).$ } of the objective function, i.e., there may exist multiple local maximizers in~\eqref{def:BM} and random initialization may lead to one of the local maximizers instead of converging to the global one. As a result, our second focus of this paper is to understand when the Burer-Monteiro approach works for $O(d)$ synchronization. In particular, we are interested in the optimization landscape of $f(\BS)$: 
\emph{when does there exist only one global maximizer, without any other spurious local maximizers?  Moreover, is this global maximizer exactly rank-$d$, i.e.,  it matches the solution to~\eqref{def:OD}?}

\section{Main theorem}\label{s:main}

\subsection{Tightness of SDP relaxation}
Here is our main theorem which provides a deterministic condition to ensure the tightness of SDP relaxation in orthogonal group synchronization.
\begin{theorem}[Deterministic condition for the tightness of SDP relaxation]\label{thm:cvx}
The solutions to~\eqref{def:OD} and~\eqref{def:CVX} are exactly the same, i.e., the global maximizer to the SDP is unique and exactly rank-$d$, if
\[
n \geq  \frac{3\delta^2d\|\BDelta\|^2_{\op}}{2n} + \delta\sqrt{\frac{d}{n}} \|\BDelta\|_{\op}\max_{1\leq i\leq n}\|\BDelta_i\|_{\op}  + \max_{1\leq i\leq n}\left\|\BDelta_{i} ^{\top}\BG\right\|_{\op} + \|\BDelta\|_{\op}, \quad \delta=4,
\]
where $\BDelta_i^{\top} = [\BDelta_{i1}, \cdots,\BDelta_{in}]\in\RR^{d\times nd}$ is the $i$th  block row of $\BDelta$ and $\BDelta_i^{\top}\BG = \sum_{j=1}^n \BDelta_{ij}\BG_j.$
\end{theorem}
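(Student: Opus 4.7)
My plan is to construct an explicit dual certificate tailored to the (unknown) least-squares estimator $\widehat{\BR}$ of \eqref{def:OD}, and then verify its PSDness and kernel dimension spectrally.

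\textbf{Step 1 (Dual construction via first-order optimality).} Let $\widehat{\BR}\in O(d)^{\otimes n}$ be any global maximizer of \eqref{def:OD}. Riemannian first-order optimality on the Stiefel manifold forces the $d\times d$ matrix $\widehat{\BR}_{i}^{\top}\big(\sum_{j}\BA_{ij}\widehat{\BR}_{j}\big)$ to be symmetric for every $i$, because the Lagrange multiplier attached to the orthogonality constraint $\widehat{\BR}_{i}\widehat{\BR}_{i}^{\top}=\I_{d}$ is symmetric. This motivates
\[
\BLambda_{i}:=\Big(\sum_{j=1}^{n}\BA_{ij}\widehat{\BR}_{j}\Big)\widehat{\BR}_{i}^{\top}, \qquad \BS:=\blkdiag(\BLambda_{1},\dots,\BLambda_{n})-\BA,
\]
so that $\BLambda_{i}=\widehat{\BR}_{i}\big(\widehat{\BR}_{i}^{\top}\sum_{j}\BA_{ij}\widehat{\BR}_{j}\big)\widehat{\BR}_{i}^{\top}$ is symmetric and $\BS\widehat{\BR}=\bzero$ holds by construction. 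Weak duality combined with $\widehat{\BX}_{ii}=\I_{d}$ and $\BS\widehat{\BR}=\bzero$ then implies that $\widehat{\BX}:=\widehat{\BR}\widehat{\BR}^{\top}$ is optimal for \eqref{def:CVX} provided $\BS\succeq\bzero$, with uniqueness and exact rank-$d$ following if additionally $\rank(\BS)=nd-d$.

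\textbf{Step 2 (Reduction to a spectral gap on $T^{\perp}$).} Let $T:=\Ran(\widehat{\BR})\subset\RR^{nd}$. Because $\BS\widehat{\BR}=\bzero$, decomposing any $\bu$ as $\bu_{T}+\bu_{T^{\perp}}$ yields $\bu^{\top}\BS\bu=\bu_{T^{\perp}}^{\top}\BS\bu_{T^{\perp}}$, and I aim to show
\[
\bu_{T^{\perp}}^{\top}\BS\bu_{T^{\perp}}\;\geq\;\Big(\min_{i}\lambda_{\min}(\BLambda_{i})-\sup_{\bv\in T^{\perp},\,\|\bv\|=1}\bv^{\top}\BA\bv\Big)\|\bu_{T^{\perp}}\|^{2}\;>\;0
\]
for every nonzero $\bu_{T^{\perp}}\in T^{\perp}$. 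This reduces the theorem to two quantitative tasks: (a) lower bound $\min_{i}\lambda_{\min}(\BLambda_{i})$, and (b) upper bound the restricted quadratic form $\sup_{\bv\in T^{\perp},\|\bv\|=1}\bv^{\top}\BA\bv$. Once both are in place, strictness of the inequality gives $\rank(\BS)=nd-d$, hence uniqueness.

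\textbf{Step 3 (Perturbation analysis).} Both bounds hinge on the fact that $T$ is close to $\Ran(\BG)$. Writing $\widehat{\BR}=\BG\BO^{*}+\BE$ for the best alignment $\BO^{*}\in O(d)$, a Davis--Kahan/Wedin-type argument starting from the optimality inequality $\langle\BA,\widehat{\BX}\rangle\geq\langle\BA,\BG\BG^{\top}\rangle$ and the bound $\|\BA-\BG\BG^{\top}\|_{\op}=\|\BDelta\|_{\op}$ yields $\|\BE\|_{F}\leq\delta\sqrt{d/n}\,\|\BDelta\|_{\op}$ with $\delta=4$. Substituting this into $\BLambda_{i}=(\sum_{j}\BA_{ij}\widehat{\BR}_{j})\widehat{\BR}_{i}^{\top}$ and sorting by powers of $\BE$: the zeroth-order term is $n\I_{d}+\text{sym}((\BDelta_{i}^{\top}\BG)\BG_{i}^{\top})$ so $\lambda_{\min}(\BLambda_{i})\geq n-\|\BDelta_{i}^{\top}\BG\|_{\op}$; the cross term between $\BE$ and $\BDelta$ loses at most $\delta\sqrt{d/n}\,\|\BDelta\|_{\op}\|\BDelta_{i}\|_{\op}$; and the quadratic-in-$\BE$ remainder loses at most $\tfrac{3\delta^{2}d\|\BDelta\|_{\op}^{2}}{2n}$. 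For (b), because the planted component $\BG\BG^{\top}$ nearly annihilates on $T^{\perp}$, one obtains $\sup_{\bv\in T^{\perp}}\bv^{\top}\BA\bv\leq\|\BDelta\|_{\op}$ up to corrections of the same form already accounted for above. Collecting all four contributions yields precisely the stated deterministic inequality.

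\textbf{Main obstacle.} The technical heart is the perturbation estimate $\|\widehat{\BR}-\BG\BO^{*}\|_{F}\lesssim\sqrt{d/n}\,\|\BDelta\|_{\op}$ and its clean propagation through $\BLambda_{i}$ without inflating the leading balance $n\gtrsim\max_{i}\|\BDelta_{i}^{\top}\BG\|_{\op}+\|\BDelta\|_{\op}$. I anticipate a bootstrap: first a crude Davis--Kahan bound on $\widehat{\BX}-\BG\BG^{\top}$ in Frobenius norm, then reinjection into the first-order identity $\widehat{\BR}_{i}^{\top}\sum_{j}\BA_{ij}\widehat{\BR}_{j}\in\mathrm{Sym}(d)$ to sharpen the row-wise control of $\BE$. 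Carefully separating the symmetric and antisymmetric parts of $\BE$ (so that only the symmetric part interacts with $\BLambda_{i}^{*}$ at leading order) is what pins down both the factor $\tfrac{3}{2}$ and the constant $\delta=4$ appearing in the theorem.
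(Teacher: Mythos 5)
Your architecture matches the paper's: construct the dual certificate $\BLambda$ from first-order optimality of $\widehat{\BR}$, reduce positive-semidefiniteness of $\BLambda-\BA$ to a spectral-gap estimate on $T^{\perp}$, and feed a proximity bound on $\widehat{\BR}-\BG\BO^{*}$ into the perturbation of $\BLambda_{ii}$. The gap is in Step~3 and in your ``Main obstacle'' paragraph, where you misidentify how $\delta=4$ is actually obtained.

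The proximity bound is not derived by a Davis--Kahan/Wedin perturbation argument with a bootstrap, nor does it require separating symmetric and antisymmetric parts of $\BE$. The paper's derivation (Proposition 5.6) is a single short algebraic step. After the change of variables so that $\BG$ is replaced by $\BZ$, global optimality of $\widehat{\BR}$ for (P) gives $\lag\BA,\widehat{\BR}\widehat{\BR}^{\top}\rag\geq\lag\BA,\BZ\BZ^{\top}\rag$, which rearranges to $n^{2}d-\|\BZ^{\top}\widehat{\BR}\|_{F}^{2}\leq\lag\BDelta,\widehat{\BR}\widehat{\BR}^{\top}-\BZ\BZ^{\top}\rag$. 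One then uses the factorization $\widehat{\BR}\widehat{\BR}^{\top}-\BZ\BZ^{\top}=(\widehat{\BR}-\BZ\BQ)(\widehat{\BR}+\BZ\BQ)^{\top}+\text{(antisym.)}$, where the antisymmetric part vanishes against the symmetric $\BDelta$, so the right-hand side is at most $\|\BDelta\|_{\op}\|\widehat{\BR}+\BZ\BQ\|_{F}\,d_{F}(\widehat{\BR},\BZ)\leq 2\sqrt{nd}\,\|\BDelta\|_{\op}\,d_{F}(\widehat{\BR},\BZ)$. On the left, the elementary two-sided inequality $\tfrac{n}{2}\,d_{F}^{2}(\widehat{\BR},\BZ)\leq n^{2}d-\|\BZ^{\top}\widehat{\BR}\|_{F}^{2}\leq n\,d_{F}^{2}(\widehat{\BR},\BZ)$, which follows purely from $0\leq\sigma_{i}(\BZ^{\top}\widehat{\BR})\leq n$ and $d_{F}^{2}=2(nd-\|\BZ^{\top}\widehat{\BR}\|_{*})$, closes the loop and gives $d_{F}(\widehat{\BR},\BZ)\leq 4\sqrt{d/n}\,\|\BDelta\|_{\op}$ in one shot. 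This identity linking $n^{2}d-\|\BZ^{\top}\widehat{\BR}\|_{F}^{2}$ to $d_{F}^{2}$ is the crucial ingredient missing from your sketch.

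A second, smaller inaccuracy: the factor $\tfrac{3}{2}$ in the theorem's leading term does not all come from the quadratic-in-$\BE$ remainder of $\BLambda_{i}$. In the paper, a contribution of $\tfrac{1}{2}\,\delta^{2}d\|\BDelta\|_{\op}^{2}/n$ comes from lower-bounding $\sigma_{\min}(\BZ^{\top}\widehat{\BR})$ inside $\BLambda_{ii}$, while a separate contribution of $\delta^{2}d\|\BDelta\|_{\op}^{2}/n$ comes from bounding $\bu^{\top}\BZ\BZ^{\top}\bu$ for $\bu\in T^{\perp}$ (using $\sum_{j}\widehat{\BR}_{j}^{\top}\bu_{j}=0$ so $\bu^{\top}\BZ\BZ^{\top}\bu=\|\sum_{j}(\widehat{\BR}_{j}-\BQ)^{\top}\bu_{j}\|^{2}\leq\|\widehat{\BR}-\BZ\BQ\|_{\op}^{2}\|\bu\|^{2}$); these add to $\tfrac{3}{2}$. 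Attributing the whole $\tfrac{3}{2}$ to the expansion of $\BLambda_{i}$, as you do, would not reproduce the claimed constant.
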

Theorem~\ref{thm:cvx} indicates that solving the SDP relaxation yields the global maximizer to~\eqref{def:OD} which is NP-hard in general, under the condition that the noise strength $\|\BDelta\|_{\op}$ is small. 
 Moreover, this condition is purely deterministic and thus it can be easily applied to $O(d)$ synchronization under other noise models. Here we provide one such example under Gaussian random noise.

\begin{theorem}[Tightness of recovery under Gaussian noise]\label{thm:CVX_Gaussian}
The solution to the SDP relaxation~\eqref{def:CVX} is exactly rank-$d$  with high probability if
\[
\sigma \leq \frac{ C_0n^{1/4}}{ d^{3/4}}
\]
for some small constant $C_0$.
\end{theorem}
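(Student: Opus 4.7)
The plan is to invoke Theorem~\ref{thm:cvx} with $\BDelta = \sigma\BW$ and verify each of the four terms on the right-hand side using standard concentration for Gaussian matrices. Because the change of variables~\eqref{eq:varchange} replaces $\BDelta_{ij}$ by $\BG_i^\top \BDelta_{ij}\BG_j$, orthogonal invariance of the Gaussian law implies the new noise has the same distribution as the old one; in the transformed coordinates the ground truth becomes $\BG_i = \I_d$, so $\BDelta_i^{\top}\BG$ reduces to $\sum_{j=1}^n \BDelta_{ij}$, which makes the last term of Theorem~\ref{thm:cvx} easy to bound.

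First I would estimate the global operator norm. Since $\BW$ is a symmetric $nd\times nd$ matrix whose off-diagonal blocks are independent $d\times d$ standard Gaussians (with $\BW_{ii}=0$ and $\BW_{ji}=\BW_{ij}^{\top}$), this is a Wigner-type matrix and concentration (e.g., Slepian/Gordon or matrix Bernstein) yields
\[
\|\BDelta\|_{\op} \;\lesssim\; \sigma\sqrt{nd}
\]
with probability at least $1-e^{-cnd}$. Next, each block row $\BDelta_i^{\top}\in\RR^{d\times nd}$ is (after deleting the zero diagonal block) a $d\times (n-1)d$ Gaussian matrix of variance $\sigma^2$, so standard rectangular concentration gives $\|\BDelta_i\|_{\op}\lesssim \sigma(\sqrt{nd}+\sqrt{d})$, and a union bound over $i\in[n]$ produces
\[
\max_{1\leq i\leq n}\|\BDelta_i\|_{\op}\;\lesssim\;\sigma\sqrt{nd}\,(\text{up to }\sqrt{\log n}).
\]
Finally, for fixed $i$, $\sum_{j\neq i}\BDelta_{ij}$ is a sum of $n-1$ independent $d\times d$ Gaussian matrices with entrywise variance $\sigma^2$, hence a single $d\times d$ Gaussian matrix with entrywise variance $n\sigma^2$; its operator norm is $\lesssim \sigma\sqrt{n}\cdot\sqrt{d}=\sigma\sqrt{nd}$ with high probability, and a union bound over $i$ keeps this rate up to logarithmic factors.

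Plugging these bounds into Theorem~\ref{thm:cvx} with $\delta=4$, the sufficient condition becomes
\[
n\;\gtrsim\; d^2\sigma^2 \;+\; \sigma^2\sqrt{nd^3}\;+\;\sigma\sqrt{nd},
\]
where the first term comes from $d\|\BDelta\|_{\op}^2/n$, the second from $\sqrt{d/n}\|\BDelta\|_{\op}\max_i\|\BDelta_i\|_{\op}$, and the last two from $\max_i\|\BDelta_i^{\top}\BG\|_{\op}+\|\BDelta\|_{\op}$. The dominant constraint is $\sigma^2\sqrt{nd^3}\lesssim n$, i.e., $\sigma^4 d^3\lesssim n$, which is exactly $\sigma \lesssim n^{1/4}/d^{3/4}$; one checks that under this scaling the other two terms are lower order (provided $d\lesssim n$).

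The work is essentially routine once Theorem~\ref{thm:cvx} is granted, so I do not anticipate a serious obstacle. The only subtle point is the bound on $\max_i\|\BDelta_i^{\top}\BG\|_{\op}$: one must first apply the change of variables~\eqref{eq:varchange} so that the Gaussianity is preserved and the summands become independent, and then use the fact that a symmetric sum of independent Gaussian blocks remains Gaussian with inflated variance rather than attempting a crude triangle-inequality bound, which would lose a factor of $\sqrt{n}$ and produce the suboptimal scaling $\sigma\lesssim 1/\sqrt{d}$ instead of $\sigma\lesssim n^{1/4}/d^{3/4}$.
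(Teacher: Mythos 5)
Your proposal is correct and follows essentially the same route as the paper: reduce to the deterministic condition of Theorem~\ref{thm:cvx} (equivalently, Proposition~\ref{prop:keys}), bound $\|\BDelta\|_{\op}\lesssim\sigma\sqrt{nd}$ and $\max_i\|\BDelta_i^{\top}\BZ\|_{\op}\lesssim\sigma\sqrt{n}(\sqrt{d}+\sqrt{\log n})$ by Gaussian concentration plus a union bound, then identify $\sigma^2\sqrt{nd^3}$ as the dominant term, yielding $\sigma\lesssim n^{1/4}/d^{3/4}$. The only cosmetic difference is that you estimate $\max_i\|\BDelta_i\|_{\op}$ separately whereas the paper simply uses $\max_i\|\BDelta_i\|_{\op}\leq\|\BDelta\|_{\op}$; both give the same order.
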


Our result improves the bound on $\sigma$ by a factor of $\sqrt{d}$,  compared with the recent result by Zhang~\cite{Z19} in which the tightness of SDP holds if
\[
\sigma \lesssim \frac{n^{1/4}}{d^{5/4}}.
\]
One natural question is whether the bound shown above is optimal. The answer is negative. 
Take the case with Gaussian noise as an example: the strength of the planted signal $\BG\BG^{\top}$ is $\|\BG\BG^{\top}\|_{\op} = n.$ The operator norm of the noise is
\[
\|\BDelta\|_{\op} = \sigma\|\BW\|_{\op}= 2\sigma\sqrt{nd}(1 + o(1))
\]
where $\|\BW\|_{\op} = 2\sqrt{nd}(1 + o(1))$ is a classical result for symmetric Gaussian random matrix. For this matrix spike model, the detection threshold should be 
\[
2\sigma\sqrt{nd} \leq n \Longleftrightarrow \sigma \leq \frac{1}{2}\sqrt{\frac{n}{d}}.
\]
In fact, our numerical experiments in Section~\ref{s:numerics} confirm this threshold. This indicates that our analysis still has a large room for improvement: namely improve the dependence of $\sigma$ on $n$ from $n^{1/4}$ to $n^{1/2}.$ 

In particular, if $d=1$, i.e., $\mathbb{Z}_2$-synchronization, SDP is proven to be tight in~\cite{B18} if $\sigma < \sqrt{\frac{n}{(2+\eps)\log n}}$ under Gaussian noise. If $d=2$ and 
\[ 
\BW_{ij} = \frac{1}{\sqrt{2}}
\begin{bmatrix}
X_{ij} & Y_{ij} \\
-Y_{ij} & X_{ij}
\end{bmatrix}
\] 
where $X_{ij}$ and $Y_{ij}$ are independent standard normal, then the model is equivalent to the angular synchronization under complex normal noise which is discussed in~\cite{BBS17,ZB18}. It is shown in~\cite{ZB18} that the factor $n^{1/4}$ can be improved to $n^{1/2}$ by using the leave-one-out technique. We leave the tightness analysis of the orthogonal group synchronization as a future research topic.

\subsection{Optimization landscape of Burer-Monteiro approach}
Our second main result characterizes the optimization landscape of~\eqref{def:BM}. 
\begin{theorem}[Uniqueness and tightness of local maximizer]\label{thm:bm}
For the objective function $f(\BS)$ defined in~\eqref{def:BM}, it has a unique local maximizer which is also the global maximizer 
if $p \geq 2d+1$ and
\[
n \geq  \frac{3\delta^2d\|\BDelta\|^2_{\op}}{2n} + \delta\sqrt{\frac{d}{n}} \|\BDelta\|_{\op}\max_{1\leq i\leq n}\|\BDelta_i\|_{\op}  + \max_{1\leq i\leq n}\left\|\BDelta_{i}^{\top} \BG\right\|_{\op} + \|\BDelta\|_{\op}
\]
where 
\begin{equation}\label{def:gamma}
\delta =  \frac{(2+\sqrt{5})(p+ d)\gamma}{p-2d}, \quad \gamma : = \frac{\|\Tr_d(\BDelta)\|_{\op}}{\|\BDelta\|_{\op}} \vee 1
\end{equation}
and $\Tr_d(\BDelta) = [\Tr(\BDelta_{ij})]_{ij}\in\RR^{n\times n}$ denotes the partial trace of $\BDelta.$ Moreover, this global maximizer is exactly rank-$d$.
\end{theorem}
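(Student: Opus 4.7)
The strategy is to reduce the landscape analysis to the tightness result of Theorem~\ref{thm:cvx} via a dual certificate argument. If I can show that every second-order critical point (SOCP) $\BS$ of~\eqref{def:BM} on $\St(d,p)^{\otimes n}$ yields a block-diagonal multiplier $\BLambda$ certifying $\BS\BS^{\top}$ as a global optimum of~\eqref{def:CVX}, then Theorem~\ref{thm:cvx} forces $\BS\BS^{\top}=\BG\BG^{\top}$, so $\BS=\BG\BQ$ for some $\BQ\in\St(d,p)$. This simultaneously gives uniqueness of the local maximizer, coincidence with the global maximizer, and exact rank-$d$ tightness.

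First I would compute the Riemannian gradient and Hessian of $f$ on the product Stiefel manifold. The tangent cone at $\BS$ consists of $\dot\BS$ obeying $\dot\BS_i\BS_i^{\top}+\BS_i\dot\BS_i^{\top}=0$ for each $i$. First-order criticality forces the block-diagonal multiplier $\BLambda=\blkdiag(\BLambda_{11},\ldots,\BLambda_{nn})$, where $\BLambda_{ii}$ is the symmetric part of $(\BA\BS)_i\BS_i^{\top}$, to satisfy $\BA\BS=\BLambda\BS$; second-order necessity then reads $\lag \dot\BS,(\BLambda-\BA)\dot\BS\rag\ge 0$ along every tangent $\dot\BS$. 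The goal becomes to upgrade this tangent-restricted inequality to the global condition $\BLambda-\BA\succeq 0$, which together with $(\BLambda-\BA)\BS=0$ (a consequence of first-order criticality) is precisely the KKT certificate for~\eqref{def:CVX}.

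The hypothesis $p\geq 2d+1$ enters at this upgrade step. Because $p-d>d$, each block $\BS_i\in\RR^{d\times p}$ has an orthogonal complement $\BS_i^{\perp}\in\RR^{(p-d)\times p}$ of row-dimension strictly larger than $d$. For an arbitrary test matrix $\BY\in\RR^{nd\times p}$ I would form a tangent direction from the ``horizontal'' part $\BY_i\BS_i^{\perp\top}\BS_i^{\perp}$ plus a skew correction in the row span of $\BS_i$ that enforces $\dot\BS_i\BS_i^{\top}+\BS_i\dot\BS_i^{\top}=0$; substituting into the second-order inequality and tracking the Frobenius contribution of the correction yields
\[
\lag\BY,(\BLambda-\BA)\BY\rag \geq -\frac{(2+\sqrt{5})(p+d)}{p-2d}\,\|\Tr_d(\BDelta)\|_{\op}\,\|\BY\|_F^2 - (\text{cross terms involving } \BS, \BLambda),
\]
where the prefactor $(p+d)/(p-2d)$ records the amplification from converting horizontal test directions into admissible tangent ones, and the partial trace $\Tr_d(\BDelta)$ appears because averaging the skew correction against the $d\times d$ blocks of $\BDelta$ extracts their traces. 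Combined with the argument that certifies the planted multiplier in the proof of Theorem~\ref{thm:cvx}, each occurrence of $\|\BDelta\|_{\op}$ in the tightness bound has to be inflated by $\delta=(2+\sqrt{5})(p+d)\gamma/(p-2d)$, which is exactly how $\delta$ and $\gamma$ enter the stated hypothesis. Once $\BLambda\succeq\BA$ is secured, strong duality and Theorem~\ref{thm:cvx} close the argument.

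\textbf{Main obstacle.} The delicate part is the construction of the non-tangent test direction and the bookkeeping for the skew-correction error. The SOCP inequality only probes tangent perturbations, so extracting the global PSD certificate $\BLambda\succeq\BA$ forces one to absorb a correction whose size scales with $\|\Tr_d(\BDelta)\|_{\op}$; this is why the bound degrades as $p\downarrow 2d$ and why only the partial trace (not the full operator norm) of $\BDelta$ contaminates the prefactor. Any improvement of the coefficient $(p+d)/(p-2d)$ would require a sharper choice of tangent test directions, or a genuine use of the Hessian positivity on a multi-dimensional subspace rather than one direction at a time, and is the natural place where a tighter landscape analysis might be pursued.
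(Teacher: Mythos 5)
Your plan to go directly from the second-order criticality condition to the PSD certificate $\BLambda - \BA \succeq 0$ misses a crucial intermediate step that the paper relies on, and the route you sketch is unlikely to close the gap with $p$ as small as $2d+1$. The Hessian inequality only constrains the bilinear form $\lag \dot{\BS}, (\BLambda - \BA)\dot{\BS}\rag$ over tangent directions, while positive semidefiniteness must hold on all of $\RR^{nd}$. The correction you propose (forming a tangent direction from a ``horizontal'' part plus a skew correction) produces a lower bound that is negative and contains unspecified cross terms involving $\BS$ and $\BLambda$; there is no mechanism in your write-up by which this yields a nonnegative quadratic form. More fundamentally, direct BM-upgrade arguments of this type require $p$ on the order of $\sqrt{n}$ (the Pataki/Boumal--Voroninski--Bandeira regime), not $p = 2d+1$; the latter bound can only be obtained by exploiting the proximity of SOCPs to the fully synchronized state $\BZ$, which your proposal never establishes.

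The paper's actual argument is a two-step reduction: Proposition~\ref{prop:SZBM} shows that \emph{every} second-order critical point satisfies the proximity condition $d_F(\BS,\BZ) \leq \delta\sqrt{d/n}\,\|\BDelta\|_{\op}$ with the stated $\delta$, and Proposition~\ref{prop:keys} then certifies any proximal SOCP as the unique global optimum via the dual matrix $\BC = \BLambda - \BA$. Proposition~\ref{prop:SZBM} itself is proved by testing the Hessian inequality against a \emph{randomized} tangent direction $\dot{\BS}_i = \BPhi(\I_p - \BS_i^{\top}\BS_i)$ with $\BPhi$ Gaussian and taking expectation: the quantities $(p-d)$ and $(p-2d)$ come from $\E[\dot{\BS}_i\dot{\BS}_j^{\top}] = (p - 2d + \|\BS_i\BS_j^{\top}\|_F^2)\I_d$ for $i \neq j$, and the partial trace $\Tr_d(\BDelta)$ arises because this expectation is a multiple of $\I_d$, so pairing it against $\BA_{ij}$ extracts $\Tr(\BDelta_{ij})$. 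This is a different mechanism from the ``skew-correction bookkeeping'' you describe, and it produces a bound on the objective value $\|\BZ^{\top}\BS\|_F^2$, not a bound on $\lag \BY, (\BLambda - \BA)\BY\rag$. The second-order condition is also needed once more, in a lighter form, inside Proposition~\ref{prop:keys} to guarantee $\BLambda_{ii} \succeq \I_d$ (Lemma~\ref{lem:Lambdapos}), but the heavy lifting for positivity of $\BC$ on the orthogonal complement of the columns of $\BS$ comes from the proximity bound and the first-order condition, not from a direct Hessian-to-PSD upgrade.
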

Theorem~\ref{thm:bm} conveys two messages: one is that the optimization landscape of~\eqref{def:BM} is benign, meaning there exists a unique local maximizer which also corresponds to the global maximizer to the SDP; moreover, it is rank-$d$, indicating the tightness of the global maximizer. The characterization of benign optimization landscape justifies the remarkable performance of the Burer-Monteiro approach. 
\begin{theorem}[Optimization landscape under Gaussian noise]\label{thm:BM_Gaussian}
The optimization landscape of $f(\BS)$ is benign if $p \geq 2d+1$ and
\[
\sigma \leq  C_0\sqrt{\frac{p-2d}{p+d}}\cdot \frac{n^{1/4}}{d^{3/4}}
\]
 with high probability for some small constant $C_0$. In other words, $f(\BS)$ in~\eqref{def:BM} has only one local maximizer which is also global and corresponds to the maximizer of the SDP relaxation~\eqref{def:CVX} and~\eqref{def:OD}. 
\end{theorem}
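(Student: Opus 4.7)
The plan is to obtain Theorem~\ref{thm:BM_Gaussian} as an immediate probabilistic corollary of the deterministic condition in Theorem~\ref{thm:bm}. It suffices to show that under the Gaussian noise model $\BDelta=\sigma\BW$, the four data-dependent quantities appearing on the right-hand side of that condition, namely $\|\BDelta\|_{\op}$, $\max_i \|\BDelta_i\|_{\op}$, $\max_i \|\BDelta_i^{\top}\BG\|_{\op}$, and $\|\Tr_d(\BDelta)\|_{\op}$ (the last entering through $\gamma$ and hence through $\delta$), are all of order $\sigma\sqrt{nd}$ with high probability.

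By standard tail bounds for symmetric Gaussian random matrices combined with Gaussian-Lipschitz concentration for the operator norm, $\|\BDelta\|_{\op}\le 2\sigma\sqrt{nd}(1+o(1))$ with overwhelming probability. Each block row $\BDelta_i\in\RR^{d\times nd}$ consists of essentially independent Gaussian entries of variance $\sigma^2$, and since the operator norm is $1$-Lipschitz in Frobenius norm, Gaussian concentration plus a union bound over $i\in[n]$ yields $\max_i\|\BDelta_i\|_{\op}\lesssim\sigma\sqrt{nd}$. For the third quantity, rotational invariance of the Gaussian implies that $\BDelta_{ij}\BG_j$ has the same distribution as $\BDelta_{ij}$, and for fixed $i$ the blocks $\{\BDelta_{ij}\}_{j\ne i}$ are mutually independent; hence $\BDelta_i^{\top}\BG=\sum_{j\ne i}\BDelta_{ij}\BG_j$ is a $d\times d$ matrix with i.i.d.\ $N(0,(n-1)\sigma^2)$ entries whose operator norm is at most $2\sigma\sqrt{nd}(1+o(1))$, and another union bound over $i$ gives $\max_i\|\BDelta_i^{\top}\BG\|_{\op}\lesssim\sigma\sqrt{nd}$. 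Finally, $\Tr_d(\BDelta)$ is an $n\times n$ symmetric Gaussian matrix whose off-diagonal entries have variance $\sigma^2 d$, so $\|\Tr_d(\BDelta)\|_{\op}\le 2\sigma\sqrt{nd}(1+o(1))$ with high probability; in particular $\gamma=O(1)$ and $\delta=O\bigl((p+d)/(p-2d)\bigr)$.

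Substituting these bounds into the deterministic inequality of Theorem~\ref{thm:bm}, the four summands on the right-hand side scale as $\delta^2\sigma^2 d^2$, $\delta\sigma^2 d^{3/2}\sqrt{n}$, $\sigma\sqrt{nd}$, and $\sigma\sqrt{nd}$ respectively. In the regime of interest (with $d$ small relative to $n$) the binding constraint is the second summand: demanding $\delta\sigma^2 d^{3/2}\sqrt{n}\lesssim n$ forces $\sigma\lesssim \delta^{-1/2}\, n^{1/4}/d^{3/4}$, and plugging in $\delta\asymp (p+d)/(p-2d)$ yields precisely the bound $\sigma\le C_0\sqrt{(p-2d)/(p+d)}\cdot n^{1/4}/d^{3/4}$ claimed in the statement. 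A routine check then confirms that under this choice of $\sigma$ the remaining three terms are automatically dominated, so the deterministic condition of Theorem~\ref{thm:bm} holds with high probability and the landscape conclusion follows.

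The main technical care needed lies in isolating independent scalar Gaussians before invoking concentration: the symmetry constraint $\BDelta_{ji}=\BDelta_{ij}^{\top}$ couples the blocks inside each row $\BDelta_i$, so one either restricts attention to the upper-triangular entries (which are genuinely i.i.d.) and reconstructs the rest deterministically, or applies Gaussian-Lipschitz concentration directly to the operator norm viewed as a function of the independent scalars above the diagonal. Beyond this bookkeeping the proof is a routine application of concentration for Gaussian random matrices, and no new conceptual ingredients beyond Theorem~\ref{thm:bm} are required.
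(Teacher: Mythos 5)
Your proposal is correct and follows essentially the same route as the paper's own proof: plug the standard Gaussian random-matrix estimates ($\|\BDelta\|_{\op}\lesssim\sigma\sqrt{nd}$, $\max_i\|\BDelta_i^{\top}\BG\|_{\op}\lesssim\sigma\sqrt{n}(\sqrt{d}+\sqrt{\log n})$, $\|\Tr_d(\BDelta)\|_{\op}\lesssim\sigma\sqrt{nd}$, hence $\gamma=O(1)$) into the deterministic condition of Theorem~\ref{thm:bm} (equivalently Proposition~\ref{prop:keys}), identify $\delta\sqrt{d/n}\,\|\BDelta\|_{\op}\max_i\|\BDelta_i\|_{\op}\asymp\delta\sigma^2 d^{3/2}\sqrt{n}$ as the binding term, and solve for $\sigma$. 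Your stated bound $\max_i\|\BDelta_i^{\top}\BG\|_{\op}\lesssim\sigma\sqrt{nd}$ quietly drops the $\sqrt{\log n}$ contribution that a union bound over $i\in[n]$ incurs (the paper keeps it as $\sigma\sqrt{n}(\sqrt{d}+\sqrt{2\log n})$), but since that term is dominated by the binding one under the claimed $\sigma$-regime, the conclusion is unaffected.
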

Similar to the scenario in Theorem~\ref{thm:CVX_Gaussian}, this bound is suboptimal in $n$. The numerical experiments indicate that $\sigma < 2^{-1}n^{1/2}d^{-1/2}$ should be the optimal scaling. However, it remains one major open problem to prove that the landscape is benign for $\sigma$ up to the order $n^{1/2}$, even in the scenario of the angular synchronization~\cite{X19,ZB18}. 
Regarding the choice of $p$, one always wants to keep $p$ as small as possible. 
We believe the bound can be improved to $p \geq 2d$ or even to $p \geq d+2$ from our current bound $p \geq 2d+1$ with more careful analyses. In our numerical experiments, we have seen that $p = 2d$ suffices to ensure global convergence of the generalized power method from any random initialization.

\section{Numerics}\label{s:numerics}

\subsection{Synchronization with Gaussian noise}

Our first experiment is to test how the tightness of~\eqref{def:CVX} depends on the noise strength. 
Consider the group synchronization problem, 
\[
\BA_{ij} = \I_d + \sigma \BW_{ij}\in\RR^{d\times d}, \quad \BA = \BZ\BZ^{\top} + \sigma\BW
\]
where $\BZ^{\top} = [\I_d, \cdots, \I_d]\in\RR^{d\times nd}$ and  $\BW\in\RR^{nd\times nd}$ is a symmetric Gaussian random matrix.
Since solving~\eqref{def:CVX} is rather expensive, we will take an alternative way to find the SDP solution. We first use thd projected power method to get a candidate solution and then confirm it is the global maximizer of the SDP relaxation~\eqref{def:CVX} by verifying the global optimality condition. 

Proposition~\ref{prop:dual} indicates that $\widehat{\BR}\in\RR^{nd\times d}$ is the unique global optimal solution to~\eqref{def:OD} and~\eqref{def:CVX} if
\[
(\widehat{\BLambda} - \BA)\widehat{\BR} = 0, \qquad \lambda_{d+1}(\widehat{\BLambda} - \BA) > 0
\]   
where $\widehat{\BLambda}$ is an $nd\times nd$ block-diagonal matrix with its $i$th block equal to 
\[
\widehat{\BLambda}_{ii} = \frac{1}{2}\sum_{j=1}^n(\widehat{\BR}_i\widehat{\BR}_j^{\top} \BA_{ji} + \BA_{ij}\widehat{\BR}_j\widehat{\BR}_i^{\top} ).
\]

We employ the following generalized projected power iteration scheme:
\begin{equation}\label{def:P}
\BR_i^{(t+1)} = \PP\left(\sum_{j=1}^n \BA_{ij}\BR_j^{(t)} \right) = \BU_i^{(t)}(\BV_i^{(t)})^{\top}
\end{equation}
where the initialization is chosen as $\BR^{(0)} = \BZ$, i.e., $\BR^{(0)}_i = \I_d$.  
Here $\BU_i^{(t)}$ and $\BV_i^{(t)}$ are the left/right singular vectors of $\sum_{j=1}^n  \BA_{ij}\BR_j^{(t)}$ respectively. More precisely, we have
\begin{align*}
\sum_{j=1}^n  \BA_{ij} \BR_j^{(t)}& = \BU_i^{(t)}\BSigma_i^{(t)}(\BV_i^{(t)})^{\top} \\
& = \BU_i^{(t)} \BSigma_i^{(t)} (\BU_i^{(t)})^{\top} \cdot \BU_i^{(t)}   (\BV_i^{(t)})^{\top} \\
& =  \BLambda_{ii}^{(t+1)}\BR_i^{(t+1)}
\end{align*}
and $\BLambda_{ii}^{(t)} = \BU_i^{(t)}\BSigma_i^{(t)}(\BU_i^{(t)})^{\top}$ is a symmetric matrix.
The fixed point $\BR^{(\infty)}$ of this iteration satisfies
\[
\sum_{j=1}^n\BA_{ij}\BR_j^{(\infty)} = \BLambda_{ii}^{(\infty)}\BR_i^{(\infty)}, \quad 1\leq i\leq n
\]
which is actually the first-order necessary condition as discussed in Lemma~\ref{lem:firstcond}.
If the fixed point is found, it remains to show $\lambda_{d+1}(\BLambda^{\infty} - \BA)> 0$ and $(\BLambda^{\infty} - \BA) \BR^{(\infty)} = 0$ in order to confirm $\BR^{(\infty)}(\BR^{(\infty)})^{\top}$ is the optimal solution to~\eqref{def:CVX}. The iteration stops when
\begin{equation}\label{eq:stop}
\| (\BLambda^{(t)} - \BA)\BR^{(t)} \|_{\op} < 10^{-6}, \quad \lambda_{d+1}(\BLambda^{(t)} - \BA) > 10^{-8}
\end{equation}
or the number of iteration reaches 500.

\begin{figure}[h!]
\begin{minipage}{0.48\textwidth}
\centering
\includegraphics[width=82mm]{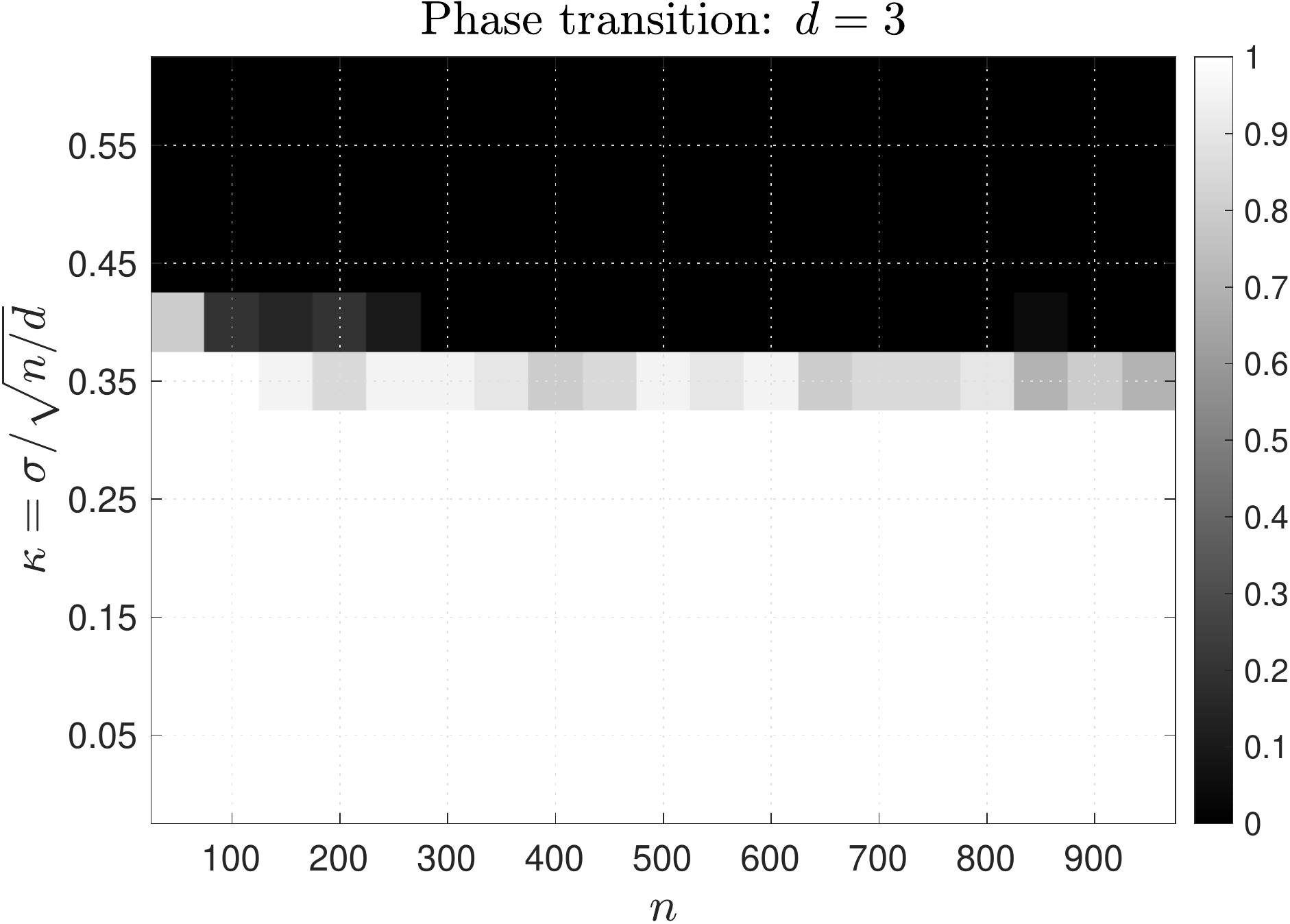}
\end{minipage}
\hfill
\begin{minipage}{0.48\textwidth}
\centering
\includegraphics[width=82mm]{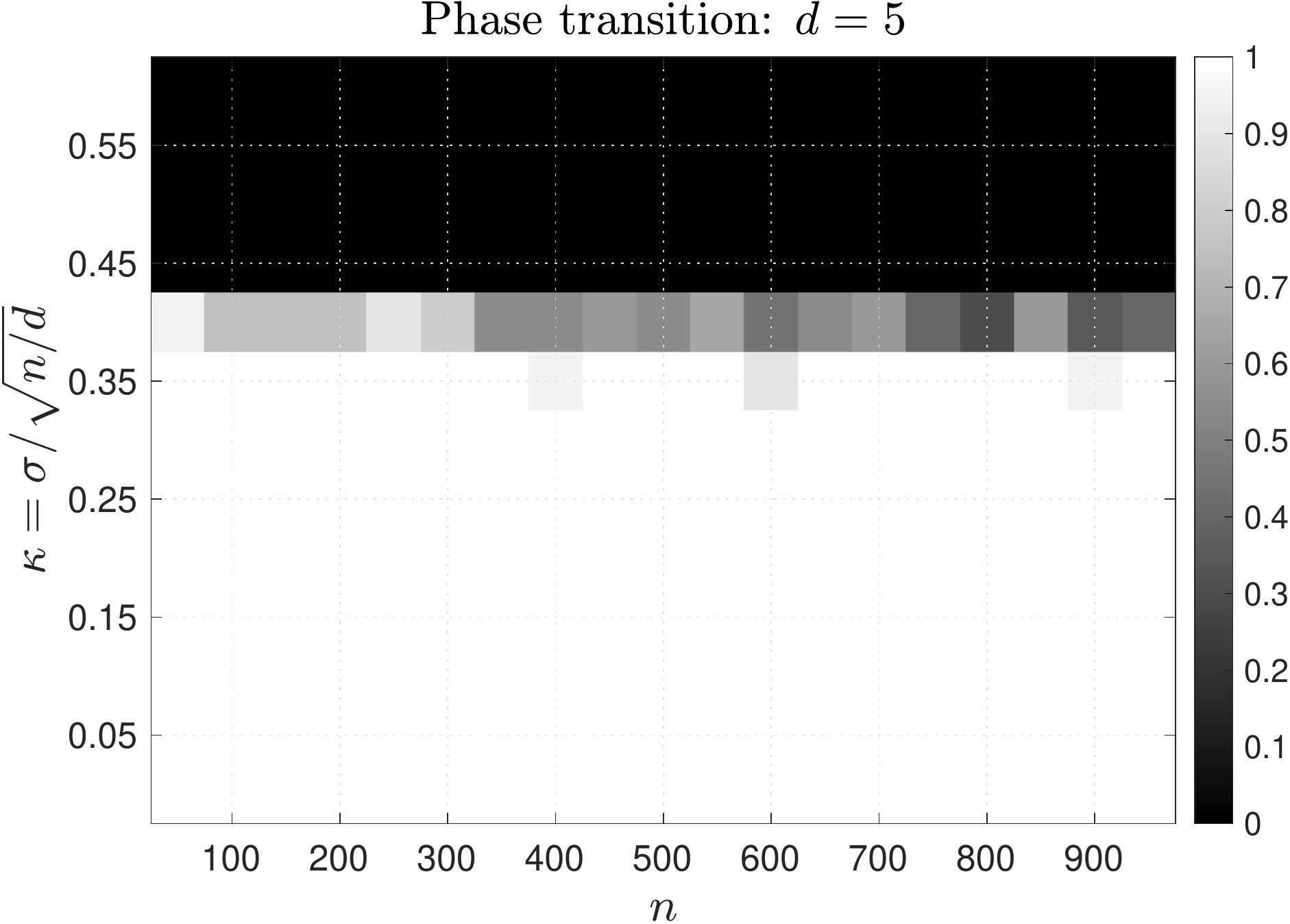}
\end{minipage}
\caption{The phase transition plot for $d = 3$ and $5$. }
\label{fig:PR_Gaussian}
\end{figure}

In this experiment, we let $d=3$ or 5 and $\sigma = \kappa \sqrt{\frac{n}{d}}$. The parameters $(\kappa,n)$ are set to be $0\leq \kappa\leq 0.6$ and $100\leq n\leq 1000$. For each pair of $(\kappa, n)$, we run 20 instances and calculate the proportion of successful cases. 
From Figure~\ref{fig:PR_Gaussian}, we see that if $\kappa < 0.35$, the SDP is tight, i.e., it recovers the global minimizer to the least squares objective function.
The phase transition plot does not depend heavily on the parameter $d$. This confirms our conjecture that $\kappa < 1/2$ (modulo a log factor), the SDP relaxation is tight.

\subsection{Phase transition plot for nonconvex low-rank optimization}
Instead of applying Riemannian gradient method to~\eqref{def:BM}, we employ projected power method to show how the convergence depends on the noise level. 
Here the power method is viewed as projected gradient ascent method. We~\emph{randomly} initialize each $\BS_i^{(0)}$ by creating a $d\times p$ Gaussian random matrix and extracting the random row space via QR decomposition. Then we perform
\[
\BS_i^{(t+1)} = \PP\left( \sum_{j=1}^n\BA_{ij} \BS_j^{(t)} \right)
\]
and the projection operator $\PP$ is defined in~\eqref{def:P}.

\begin{figure}[h!]
\begin{minipage}{0.48\textwidth}
\centering
\includegraphics[width=82mm]{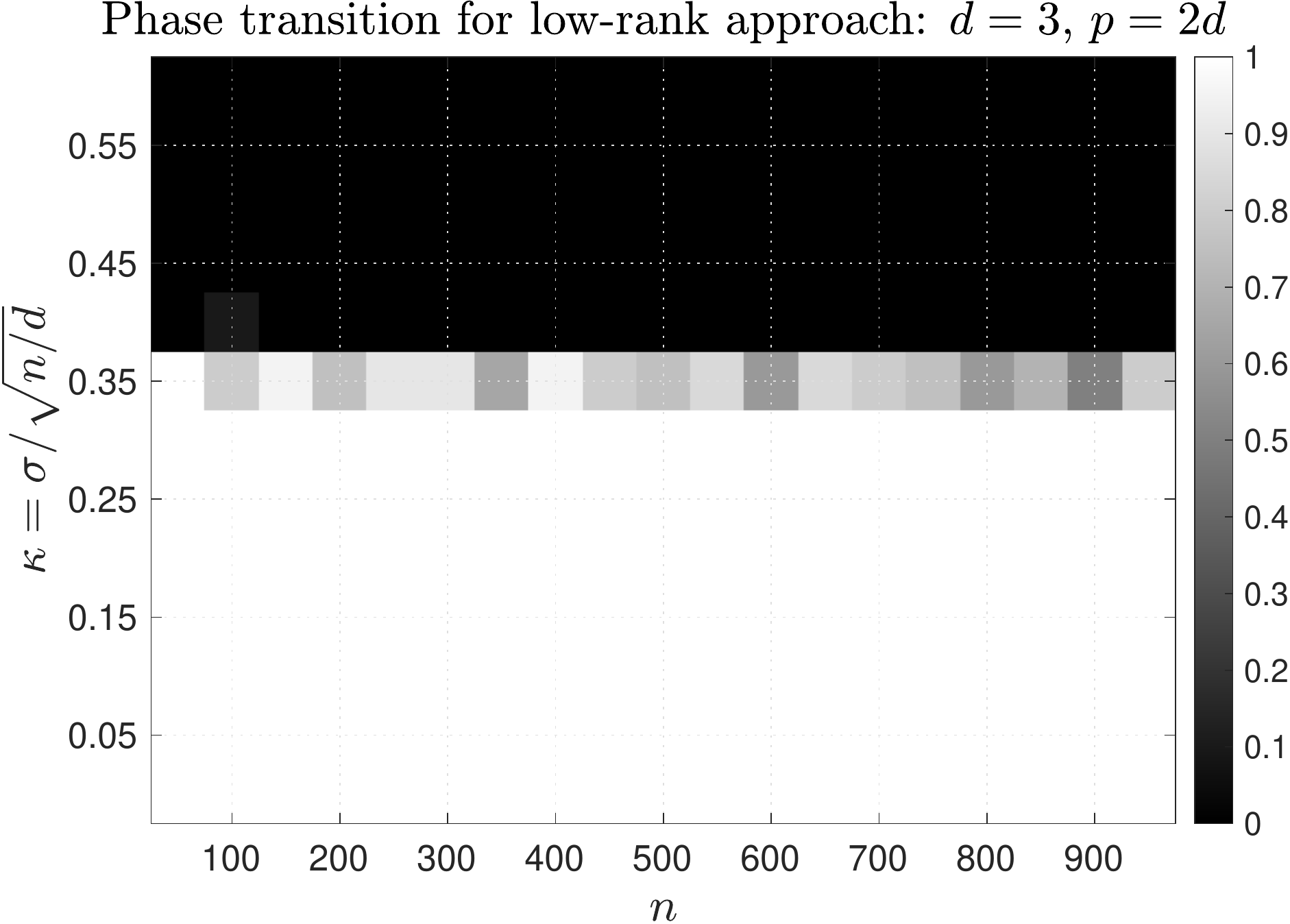}
\end{minipage}
\hfill
\begin{minipage}{0.48\textwidth}
\centering
\includegraphics[width=82mm]{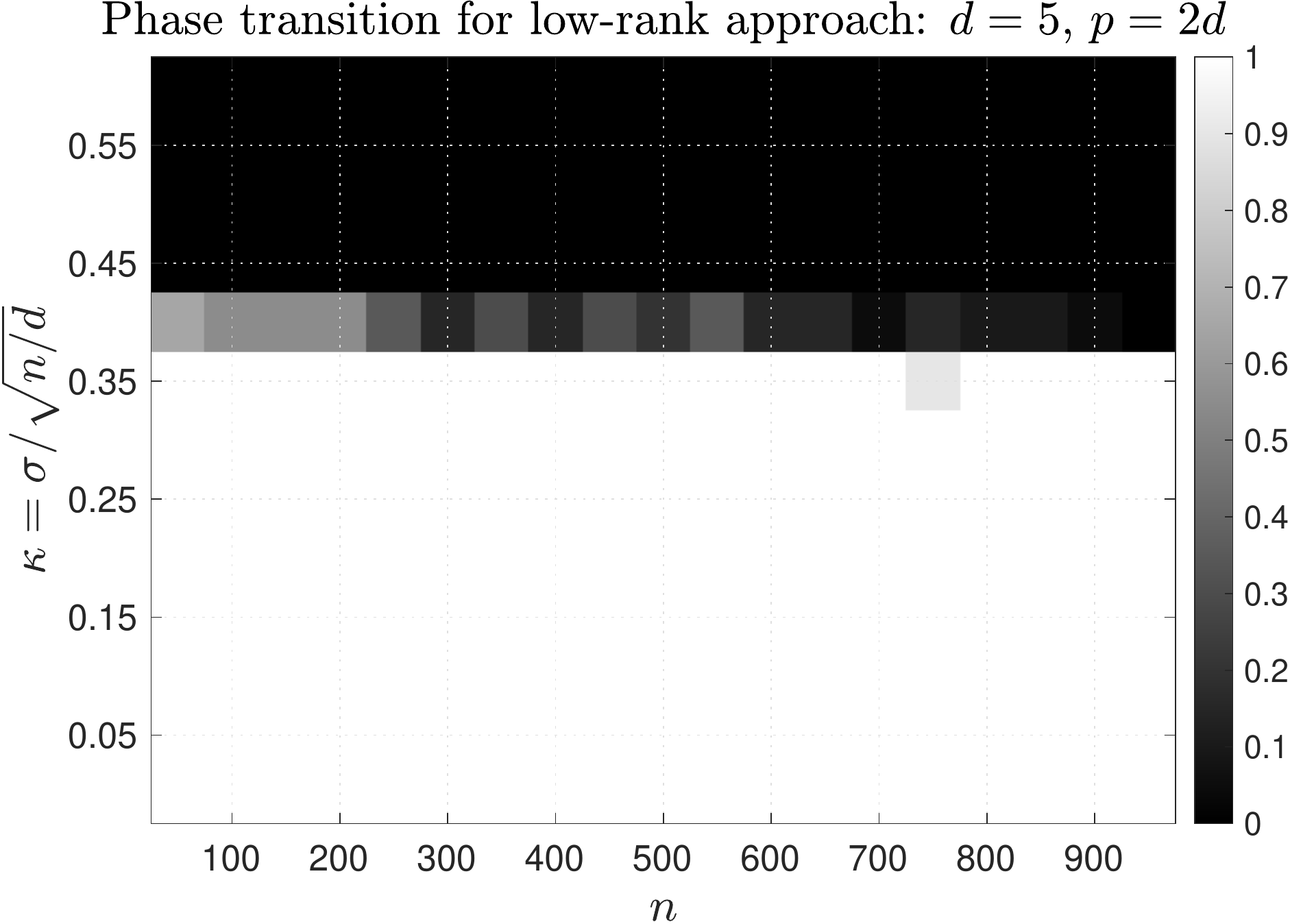}
\end{minipage}
\caption{The phase transition plot for $d = 3$ and $5$. }
\label{fig:PR_BM_Gaussian}
\end{figure}
After the iterates stabilize, we use~\eqref{eq:stop} to verify the global optimality and tightness of the solution. Here we set $ p =2d$ and for each pair of $\kappa$ and $n$, we still run 20 experiments and calculate the proportion of successful instances. Compared with Figure~\ref{fig:PR_Gaussian}, Figure~\ref{fig:PR_BM_Gaussian} provides highly similar phase transition plots for both $d = 3$ and $d = 5$. This is a strong indicator that the objective function is likely to have a benign landscape even if $\sigma = \Omega(\sqrt{nd^{-1}})$, which is much more optimistic than our current theoretical bound.

\section{Proofs}\label{s:proof}

\subsection{The roadmap of the proof}

The proof consists of several sections and some parts are rather technical. Thus we provide a roadmap of the proof here.
For both the analysis of convex relaxation and the Burer-Monteiro factorization, the key is to analyze the objective function $f(\BS)$ defined in~\eqref{def:BM} for $\BS\in\St(d,p)^{\otimes n}$. Our analysis consists of several steps:
\begin{enumerate}[(a)]
\item For~\eqref{def:BM}, we first provide a sufficient condition to certify the global optimality and tightness of $\BS$ by using the duality theory in convex optimization. This is given in Proposition~\ref{prop:dual}.
\item We show that $\BS$ is the global maximizer of~\eqref{def:BM} if $\BS$ is a second-order critical point and is sufficiently close to the~\emph{fully synchronized state}, i.e., $\BS_i = \BS_j,\forall i\neq j$. Moreover, the rank of $\BS$ equals $d$ and thus it is tight. This leads to Proposition~\ref{prop:keys}.

\item For convex relaxation, we show that the global maximizer of~\eqref{def:BM} must be highly aligned with the fully synchronized state, see Proposition~\ref{prop:SZ}; for the Burer-Monteiro approach, we prove that~\emph{all} the second order critical points (SOCP) of~\eqref{def:BM} must be close to the fully synchronized state, as shown in the Proposition~\ref{prop:SZBM}. 

\item Combining all the supporting results together finishes the proof.
\end{enumerate}

The idea of proof is mainly inspired by~\cite{BBS17,BBV16} which focus on the $\mathbb{Z}_2$- and angular synchronization. However, due to the non-commutativity of $O(d)$ for $d\geq 3$, several parts require quite different treatments. 
Now we present the first proposition which gives a sufficient condition to guarantee the global optimality and tightness, and establish the equivalence of the global maximizers among the three optimization programs~\eqref{def:OD},~\eqref{def:CVX}, and~\eqref{def:BM}.  Without loss of generality, we assume $\BA = \BZ\BZ^{\top} + \BDelta$, i.e., $\BA_{ij} = \I_d + \BDelta_{ij}$, where $\BZ^{\top} = [\I_d, \cdots, \I_d]\in\RR^{d\times nd}$ from now on.
\begin{proposition}\label{prop:dual}
Let $\BLambda$ be an $nd\times nd$ block diagonal matrix  $\BLambda = \blkdiag(\BLambda_{11},\cdots,\BLambda_{nn})$. Suppose $\BLambda$ satisfies 
\begin{equation}\label{eq:dual}
 (\BLambda - \BA)\BS = 0, \quad \BLambda - \BA\succeq 0, 
\end{equation}
for some $\BS\in\St(d,p)^{\otimes n}$, then $\BS\BS^{\top}\in \RR^{nd\times nd}$ is a global optimal solution to the SDP relaxation~\eqref{def:CVX}.
Moreover, $\BS$ is the unique global optimal solution to~\eqref{def:OD} if the following additional rank assumption holds
\begin{equation}\label{eq:rank}
\rank(\BLambda - \BA) = (n-1)d.
\end{equation}
\end{proposition}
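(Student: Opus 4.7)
The plan is to use standard Lagrangian duality for the SDP, treating the block-diagonal matrix $\BLambda$ as a dual certificate: dual feasibility is encoded by $\BLambda - \BA \succeq 0$, while complementary slackness is encoded by $(\BLambda-\BA)\BS = 0$ (which yields $\langle \BLambda-\BA, \BS\BS^{\top}\rangle = 0$). The rank condition~\eqref{eq:rank} will be used in a separate step to upgrade global optimality into uniqueness and rank-$d$ tightness.

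For the first claim, I would start from the identity
\[
\langle \BA, \BX\rangle \;=\; \langle \BLambda, \BX\rangle - \langle \BLambda-\BA, \BX\rangle
\]
valid for any $\BX\in\RR^{nd\times nd}$. Since $\BLambda$ is block-diagonal and any feasible $\BX$ of~\eqref{def:CVX} satisfies $\BX_{ii}=\I_d$, the first term collapses to $\sum_{i=1}^n \Tr(\BLambda_{ii})$, which is independent of $\BX$. For any feasible $\BX\succeq 0$, the second term is nonnegative by the PSD trace inequality $\langle \BY_1, \BY_2\rangle\ge 0$ for $\BY_1,\BY_2\succeq 0$. This yields the upper bound
\[
\langle \BA, \BX\rangle \;\leq\; \sum_{i=1}^n \Tr(\BLambda_{ii}).
\]
Then I would verify that $\BS\BS^{\top}$ attains this bound: the hypothesis $\BS_i\BS_i^{\top}=\I_d$ makes $\BS\BS^{\top}$ feasible for~\eqref{def:CVX}, and $(\BLambda-\BA)\BS = 0$ forces $\langle \BLambda-\BA, \BS\BS^{\top}\rangle = 0$, so $\BS\BS^{\top}$ is a global optimizer.

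For the uniqueness claim, let $\widehat{\BX}$ be any optimal solution. Matching the upper bound above gives $\langle \BLambda-\BA, \widehat{\BX}\rangle = 0$, and since both matrices are PSD this forces $(\BLambda-\BA)\widehat{\BX} = 0$. The rank assumption~\eqref{eq:rank} yields $\dim\ker(\BLambda-\BA) = nd - (n-1)d = d$, so $\Ran(\widehat{\BX}) \subseteq \ker(\BLambda-\BA)$ implies $\rank(\widehat{\BX})\leq d$. On the other hand the diagonal constraint $\widehat{\BX}_{ii}=\I_d$ forces each of the $n$ principal $d\times d$ blocks to have rank $d$, hence $\rank(\widehat{\BX}) = d$ exactly. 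Let $\BU\in\RR^{nd\times d}$ be an orthonormal basis of $\ker(\BLambda-\BA)$, so that $\widehat{\BX} = \BU\BK\BU^{\top}$ for some PSD $\BK\in\RR^{d\times d}$. The constraint $\BU_i\BK\BU_i^{\top} = \I_d$ on each block forces $\BU_i$ to be invertible and $\BK = (\BU_i\BU_i^{\top})^{-1}\BU_i\BU_i^{-\top}\dots$ — more cleanly, $\BK = \BU_i^{-1}(\BU_i^{-1})^{\top}$ is uniquely pinned down by any single diagonal block. Applying the same reasoning to $\BS\BS^{\top}$ yields the same $\BK$, hence $\widehat{\BX} = \BS\BS^{\top}$.

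Finally I would translate this back to~\eqref{def:OD}: since $\widehat{\BX}$ has rank exactly $d$, any factorization $\widehat{\BX} = \widehat{\BR}\widehat{\BR}^{\top}$ with $\widehat{\BR}\in\RR^{nd\times d}$ is unique up to right multiplication by $O(d)$, which is precisely the intrinsic global-rotation symmetry of~\eqref{def:OD}; and $\widehat{\BR}_i\widehat{\BR}_i^{\top} = \I_d$ forces $\widehat{\BR}_i\in O(d)$, so $\widehat{\BR}$ is feasible. The main subtlety — and the only non-mechanical step — is the uniqueness argument in the previous paragraph, where one must argue both that the $d\times d$ diagonal blocks $\BU_i$ are invertible (otherwise $\BU_i\BK\BU_i^{\top}$ could not equal $\I_d$) and that this invertibility forces the Gram factor $\BK$ to be unique. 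Everything else is a routine application of weak duality and the PSD trace inequality.
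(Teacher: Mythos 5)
Your proposal is correct and follows essentially the same dual-certificate route as the paper: establish $\langle \BA,\BX\rangle\leq\sum_i\Tr(\BLambda_{ii})$ for all feasible $\BX$ via the PSD trace inequality, check equality at $\BS\BS^{\top}$ from $(\BLambda-\BA)\BS=0$, and then use $\rank(\BLambda-\BA)=(n-1)d$ to bound $\rank(\widehat{\BX})$ above and the block constraints $\widehat{\BX}_{ii}=\I_d$ to bound it below. The one place you go further than the paper is worth noting: the paper's uniqueness argument ends with the assertion that $\BX$ and $\widetilde{\BX}$ are ``determined uniquely by the null space of $\BPi-\BA$,'' which glosses over the fact that two rank-$d$ PSD matrices sharing a column space need not be equal. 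Your $\widehat{\BX}=\BU\BK\BU^{\top}$ decomposition, together with the observation that $\BU_i\BK\BU_i^{\top}=\I_d$ forces $\BU_i$ invertible and pins $\BK=\BU_i^{-1}(\BU_i^{-1})^{\top}$ from any single diagonal block, supplies exactly the missing justification. The remainder --- translating the unique rank-$d$ SDP optimizer back to~\eqref{def:OD} by factoring $\widehat{\BX}=\widehat{\BR}\widehat{\BR}^{\top}$ and noting $\widehat{\BR}_i\widehat{\BR}_i^{\top}=\I_d$ forces $\widehat{\BR}_i\in O(d)$ --- matches the paper's reasoning and is correct.
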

The condition~\eqref{eq:dual} provides a sufficient condition for $\BX = \BS\BS^{\top}$ to be one global maximizer to~\eqref{def:CVX} and~\eqref{def:BM}. The condition~\eqref{eq:rank} characterizes when the solution $\BS\in\RR^{p\times nd}$ is of rank $d$ and unique. In particular, if $\rank(\BS) = d$, then $\BS$ is actually the global maximizer to~\eqref{def:OD}.

\vskip0.25cm

The next step is to show all the second order critical points, i.e., those points whose Riemannian gradient equals 0 and Hessian is positive semidefinite, are actually global maximizers if they are close to the fully synchronized state. It suffices to show that those SOCPs satisfy the global optimality condition~\eqref{eq:dual} and~\eqref{eq:rank}. In fact, if $\BS$ is a first order critical point, we immediately have $(\BLambda - \BA) \BS = 0$ for some block-diagonal matrix $\BLambda\in\RR^{nd\times nd}.$
\begin{lemma}[First order critical point]\label{lem:firstcond}
The first order critical point of $f(\BS)$ satisfies:
\[
\sum_{j=1}^n \BA_{ij}\BS_j = \BLambda_{ii}\BS_i, \quad \BS_i\in\St(d,p)
\]
where $\BLambda_{ii}$ equals
\begin{equation}\label{def:Lambda}
\BLambda_{ii} := \frac{1}{2}\sum_{j=1}^n\left(\BS_i\BS_j^{\top}\BA_{ji} + \BA_{ij}\BS_j\BS_i^{\top}\right).
\end{equation}
\end{lemma}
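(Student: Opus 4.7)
The plan is to compute the Riemannian gradient of $f$ on the product Stiefel manifold $\St(d,p)^{\otimes n}$ and equate it to zero. First I would compute the Euclidean gradient of $f(\BS)=\sum_{i,j}\lag\BA_{ij},\BS_i\BS_j^{\top}\rag$ with respect to the block $\BS_i$. Each $\BS_i$ appears twice in the double sum (once as the first factor and once as the second), and collecting both contributions together with the symmetry $\BA_{ji}=\BA_{ij}^{\top}$ (which follows from $\BA=\BA^{\top}$) yields $\nabla_{\BS_i}f=2\sum_{j=1}^n\BA_{ij}\BS_j$.

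Next I would describe the geometry of $\St(d,p)$ as defined in~\eqref{def:St}. Differentiating the constraint $\BS_i\BS_i^{\top}=\I_d$ gives the tangent space $T_{\BS_i}\St(d,p)=\{\xi\in\RR^{d\times p}:\xi\BS_i^{\top}+\BS_i\xi^{\top}=0\}$, whose Frobenius-orthogonal complement is the normal space $\{\BM\BS_i:\BM=\BM^{\top}\in\RR^{d\times d}\}$. A direct calculation, using $\BS_i\BS_i^{\top}=\I_d$, shows that the orthogonal projection onto the tangent space sends $\BZ\in\RR^{d\times p}$ to $\BZ-\tfrac{1}{2}(\BZ\BS_i^{\top}+\BS_i\BZ^{\top})\BS_i$. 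The Riemannian gradient of $f$ at $\BS_i$ is the image of $\nabla_{\BS_i}f$ under this projection.

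Setting $\grad_{\BS_i}f=0$ and dividing by $2$ yields
\[
\sum_{j=1}^n\BA_{ij}\BS_j \;=\; \tfrac{1}{2}\sum_{j=1}^n\bigl(\BA_{ij}\BS_j\BS_i^{\top}+\BS_i\BS_j^{\top}\BA_{ij}^{\top}\bigr)\BS_i,
\]
and rewriting $\BA_{ij}^{\top}=\BA_{ji}$ identifies the bracketed coefficient with $\BLambda_{ii}$ from~\eqref{def:Lambda}, proving the claim. There is no real obstacle here: the argument is a routine Riemannian-gradient calculation. The one point worth flagging is that the explicit symmetrization built into the Stiefel projection is precisely what forces $\BLambda_{ii}$ to be symmetric in the stated form; equivalently, post-multiplying the critical-point equation by $\BS_i^{\top}$ and using $\BS_i\BS_i^{\top}=\I_d$ identifies $\BLambda_{ii}=\sum_j\BA_{ij}\BS_j\BS_i^{\top}$, so first-order stationarity silently encodes the requirement that this matrix be symmetric. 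This symmetry will be important later when $\BLambda$ serves as the dual certificate in Proposition~\ref{prop:dual}.
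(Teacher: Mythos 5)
Your proof is correct and follows essentially the same route as the paper: compute the Euclidean gradient of $f$ with respect to $\BS_i$, project onto the tangent space of $\St(d,p)$ using the standard Stiefel projection formula, and set the result to zero. The only superficial difference is that you carefully account for the factor of $2$ arising from both occurrences of $\BS_i$ in the double sum (the paper drops it, which is harmless once the gradient is set to zero), and your closing remark about the symmetry of $\BLambda_{ii}$ being built into the projection is a nice observation that the paper leaves implicit.
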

The proof of Lemma~\ref{lem:firstcond} is given in Section~\ref{s:gradhess}. Lemma~\ref{lem:firstcond} shows that $\BLambda_{ii}$ depends on $\BS$ and is completely determined by $(\BLambda - \BA)\BS = 0$. 
As a result, it suffices to prove that $\BLambda - \BA\succeq 0$ for some second  order critical points which obey the~\emph{proximity condition}, i.e., $\BS$ is sufficiently close to $\BZ.$
To quantify this closeness, we introduce the following distance: given any $\BS\in\St(d,p)^{\otimes n}$, the distance between $\BS$ and the fully synchronized state is defined by
\begin{equation}\label{def:df}
d_F(\BS, \BZ) : = \min_{\BQ\in\RR^{d\times p}, \BQ\BQ^{\top}=\I_d} \| \BS - \BZ\BQ \|_F = \min_{\BQ\in\RR^{d\times p}, \BQ\BQ^{\top}=\I_d} \sqrt{\sum_{i=1}^n \|\BS_i - \BQ\|_F^2}
\end{equation}
where $(\BZ\BQ)^{\top} = [\BQ^{\top}, \cdots,\BQ^{\top}]\in\RR^{p\times nd}$. For the rest of the paper, we will let $\BQ$ be the $d\times p$ partial orthogonal matrix which minimizes~\eqref{def:df}. In fact, the minimizer equals $\BQ = \PP(\BZ^{\top}\BS)$ where $\BZ^{\top}\BS = \sum_{j=1}^n \BS_j\in\RR^{d\times p}$ and $\PP(\cdot)$ is defined in~\eqref{def:P}.

\begin{condition}[\bf Proximity condition]
A feasible solution $\BS\in\RR^{p\times nd}\in\St(d,p)^{\otimes n}$ satisfies the proximity condition if
\begin{equation}\label{eq:delta}
d_F(\BS,\BZ) \leq \delta \sqrt{\frac{d}{n}}\|\BDelta\|_{\op}
\end{equation}
for some constant $\delta > 0$. 
\end{condition}

The next Proposition is the core of the whole proof, stating that any SOCPs satisfying the proximity condition~\eqref{eq:delta} are global maximizers to~\eqref{def:OD} and~\eqref{def:CVX}. 
\begin{proposition}[\bf Global optimality of (P) and (SDP)]\label{prop:keys}
For a second order critical point $\BS$ satisfying~\eqref{eq:delta}, it is the unique global maximizer to both (P) and (SDP) if
\begin{equation}\label{eq:suff}
n \geq \frac{3\delta^2d\|\BDelta\|^2_{\op}}{2n} + \max_{1\leq i\leq n} \left\|  \sum_{j\neq i} \BDelta_{ij}\BS_j\right\|_{\op} + \|\BDelta\|_{\op}
\end{equation}
where
\begin{equation}\label{eq:maxSA}
\max_{1\leq i\leq n}\left\|\sum_{j\neq i} \BDelta_{ij}\BS_j\right\|_{\op} \leq \delta\sqrt{\frac{d}{n}} \|\BDelta\|_{\op}\max_{1\leq i\leq n}\|\BDelta_i\|_{\op}  + \max_{1\leq i\leq n}\left\|\BDelta_{i}^{\top} \BZ\right\|_{\op}.
\end{equation}
In other words, the global optimality of $\BS$ is guaranteed by 
\[
n \geq  \frac{3\delta^2d\|\BDelta\|^2_{\op}}{2n} + \delta\sqrt{\frac{d}{n}} \|\BDelta\|_{\op}\max_{1\leq i\leq n}\|\BDelta_i\|_{\op}  + \max_{1\leq i\leq n}\left\|\BDelta_{i}^{\top} \BZ\right\|_{\op} + \|\BDelta\|_{\op}.
\]
\end{proposition}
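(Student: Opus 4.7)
The plan is to invoke Proposition~\ref{prop:dual} with the block-diagonal dual $\BLambda$ supplied by Lemma~\ref{lem:firstcond}. The equality $(\BLambda-\BA)\BS = 0$ is automatic from $\BS$ being a first-order critical point, so what remains is to verify (i) $\BLambda-\BA \succeq 0$ and (ii) $\rank(\BLambda-\BA) = (n-1)d$. A standard Lagrangian calculation on the product Stiefel manifold shows $\Hess f(\BS)[\BV,\BV] = 2\lag \BV,(\BA-\BLambda)\BV\rag$ at a critical point, so the second-order necessary condition for a local maximum reads $\lag \BV,(\BLambda-\BA)\BV\rag \geq 0$ for every $\BV$ in the tangent space $T_{\BS}\St(d,p)^{\otimes n}$.

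The heart of the argument is to lift this tangent-space inequality to ambient positivity. For an arbitrary $\bv \in \RR^{nd}$, I plan to test the SOCP inequality against a direction of the form $\BV_i = \bv_i\bw^{\top} - \tfrac12(\bv_i\bw^{\top}\BS_i^{\top} + \BS_i\bw\bv_i^{\top})\BS_i$, where $\bw \in \RR^p$ is a unit vector chosen in the orthogonal complement of the row span of $\BQ := \PP(\BZ^{\top}\BS)$ (which exists since $p > d$). This $\BV$ lies in $T_{\BS}\St(d,p)^{\otimes n}$ because the bracketed symmetric matrix makes $\BV_i\BS_i^{\top}$ skew, and $\BQ\bw = 0$ together with proximity~\eqref{eq:delta} bounds the Stiefel correction by $\|\BS_i-\BQ\|_{\op}$; consequently $\BV \approx \bv\bw^{\top}$ in Frobenius norm up to an error whose squared sum is controlled by $\delta^2 d\|\BDelta\|_{\op}^2/n$. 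Expanding $\lag \BV,(\BLambda-\BA)\BV\rag \geq 0$ peels off the leading term $\|\bw\|^2\,\bv^{\top}(\BLambda-\BA)\bv$, and using the explicit form~\eqref{def:Lambda} of $\BLambda_{ii}$ together with $\BA = \BZ\BZ^{\top} + \BDelta$ produces the decomposition $\sum_i \bv_i^{\top}\BLambda_{ii}\bv_i = n\|\bv\|^2 + (\text{noise})$, where the signal $n\|\bv\|^2$ comes from the $\BQ\BQ^{\top} = \I_d$ pieces of $\BS_i\BS_j^{\top}$ inside~\eqref{def:Lambda}, and the noise is bounded by $\max_i\|\sum_{j\ne i}\BDelta_{ij}\BS_j\|_{\op}\|\bv\|^2 + \tfrac{3\delta^2 d\|\BDelta\|_{\op}^2}{2n}\|\bv\|^2$. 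Subtracting $\bv^{\top}\BDelta\bv \leq \|\BDelta\|_{\op}\|\bv\|^2$ and the planted term $\bv^{\top}\BZ\BZ^{\top}\bv \geq 0$ then yields $\bv^{\top}(\BLambda-\BA)\bv \geq 0$ precisely under the hypothesis~\eqref{eq:suff}.

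For the rank statement~\eqref{eq:rank}, I restrict the preceding argument to $\bv$ orthogonal to the columns of $\BZ$ — equivalently $\sum_i \bv_i = 0$ — so that $\bv^{\top}\BZ\BZ^{\top}\bv = 0$ and the inequality becomes strict under~\eqref{eq:suff}; this shows $\lambda_{d+1}(\BLambda-\BA) > 0$, and combined with $(\BLambda-\BA)\BS = 0$ forces $\rank(\BS) = d$ (proximity to $\BZ\BQ$ precludes a lower rank). The companion estimate~\eqref{eq:maxSA} on $\max_i\|\sum_{j\ne i}\BDelta_{ij}\BS_j\|_{\op}$ is a short separate decomposition: write $\BS_j = \BQ + (\BS_j-\BQ)$ so that $\sum_{j\ne i}\BDelta_{ij}\BQ = (\BDelta_i^{\top}\BZ - \BDelta_{ii})\BQ$ is controlled by $\|\BDelta_i^{\top}\BZ\|_{\op}$, while $\|\sum_{j\ne i}\BDelta_{ij}(\BS_j-\BQ)\|_{\op} \leq \|\BDelta_i\|_{\op}\,d_F(\BS,\BZ) \leq \delta\sqrt{d/n}\|\BDelta\|_{\op}\max_i\|\BDelta_i\|_{\op}$ via Cauchy--Schwarz. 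The main technical obstacle I anticipate is the bookkeeping in the projection-correction expansion: the cross-terms between $\bv\bw^{\top}$ and the symmetric Stiefel correction must be absorbed into exactly the three noise buckets of~\eqref{eq:suff} without sacrificing any factor of $n$ from the signal, which will hinge on exploiting the orthonormality $\BS_i\BS_i^{\top} = \I_d$ and the aggregate $\ell^2$ proximity $\sum_i\|\BS_i-\BQ\|_F^2 \leq \delta^2 d\|\BDelta\|_{\op}^2/n$ rather than any pointwise bound on $\|\BS_i-\BQ\|$.
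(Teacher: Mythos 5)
Your high-level plan (invoke Proposition~\ref{prop:dual} with $\BLambda$ from Lemma~\ref{lem:firstcond}, then verify positive semidefiniteness and rank) is correct, and your bound~\eqref{eq:maxSA} via the split $\BS_j = \BQ + (\BS_j - \BQ)$ matches the paper. But the central step --- establishing $\BLambda - \BA \succeq 0$ --- is attempted by a route that, as written, does not close.

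You propose to test the second-order inequality $\lag\BV,(\BLambda-\BA)\BV\rag \geq 0$ against $\BV_i = \mathrm{Proj}_{T_{\BS_i}}(\bv_i\bw^{\top})$ with $\bw$ orthogonal to the rows of $\BQ$, then peel off $\bv^{\top}(\BLambda-\BA)\bv$. Write $\BV = \bv\bw^{\top} + \BE$ with $\BE_i = -\tfrac12\bigl(\bv_i(\BS_i\bw)^{\top} + (\BS_i\bw)\bv_i^{\top}\bigr)\BS_i$; since $\BS_i\bw = (\BS_i-\BQ)\bw$, one has $\|\BE_i\|_F \lesssim \|\bv_i\|\,\|\BS_i-\BQ\|_F$ and hence $\|\BE\|_F^2 \lesssim \max_i\|\BS_i-\BQ\|_F^2\,\|\bv\|^2 \leq d_F^2(\BS,\BZ)\|\bv\|^2$. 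The correction and cross terms are then controlled only by $\|\BLambda-\BA\|_{\op}\,\|\BE\|_F^2 + 2\|\BLambda-\BA\|_{\op}\,\|\BE\|_F\|\bv\| \lesssim n\cdot d_F^2(\BS,\BZ)\,\|\bv\|^2 \asymp \delta^2 d\|\BDelta\|_{\op}^2\,\|\bv\|^2$, which is a full factor of $n$ larger than the bucket $\tfrac{3\delta^2 d\|\BDelta\|_{\op}^2}{2n}$ in~\eqref{eq:suff}. Absorbing it would force $\|\BDelta\|_{\op} \lesssim \sqrt{n/d}/\delta$ rather than $\|\BDelta\|_{\op} \lesssim n/(\delta\sqrt{d})$, giving a qualitatively weaker theorem (under Gaussian noise, $\sigma \lesssim d^{-1}$ instead of $\sigma \lesssim n^{1/4}d^{-3/4}$). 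You flag this as a ``bookkeeping'' obstacle, but the aggregate $\ell^2$ proximity does not rescue it: the bound $\max_i\|\BS_i-\BQ\|_F^2 \leq d_F^2$ is already what I used, and no further factor of $1/n$ is available from the worst-case $\bv$.

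The paper avoids this entirely by using the second-order condition only once and locally: Lemma~\ref{lem:Lambdapos} takes $\dot{\BS}_i = \bu_i\bv_i^{\top}$ with $\bv_i$ exactly in the kernel of $\BS_i$ (available since $p > d$, no projection correction at all) to conclude $\BLambda_{ii} \succeq \I_d$. The sharp lower bound on $\lambda_{\min}(\BLambda_{ii})$ then comes from the \emph{first-order} identity $\BLambda_{ii} = \sum_j \BA_{ij}\BS_j\BS_i^{\top}$ with $\BS_i\BS_i^{\top} = \I_d$: positivity turns eigenvalues into singular values, and the singular values of $\BLambda_{ii}$ coincide with those of $\sum_j\BA_{ij}\BS_j = \BZ^{\top}\BS + \sum_j\BDelta_{ij}\BS_j$, where $\sigma_{\min}(\BZ^{\top}\BS) \geq n - \tfrac{\delta^2 d\|\BDelta\|_{\op}^2}{2n}$ by Lemma~\ref{lem:sigmamin}. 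The final check $\bu^{\top}\BC\bu \geq 0$ is done for $\bu \perp \mathrm{col}(\BS)$ via the decomposition $\BC = \BLambda - \BZ\BZ^{\top} - \BDelta$, where $\bu^{\top}\BZ\BZ^{\top}\bu = \|\sum_j(\BQ-\BS_j)^{\top}\bu_j\|^2 \leq d_F^2(\BS,\BZ)\|\bu\|^2$ exploits the orthogonality constraint $\sum_j\BS_j^{\top}\bu_j = 0$. This interplay between the first-order condition (for the quantitative estimate) and the second-order condition (for the qualitative positivity of $\BLambda_{ii}$) is precisely what your sketch omits, and it is where the sharp $1/n$ scaling comes from. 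Your proposed handling of the rank claim is in the right spirit, but the paper's route --- showing $\BLambda - \BDelta \succ 0$ directly so that $\rank(\BLambda - \BZ\BZ^{\top} - \BDelta) \geq nd - d$ --- is cleaner and does not require re-running the positivity argument on a restricted subspace.
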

\begin{remark}
Proposition~\ref{prop:keys} provides a simple criterion to verify a near-fully synchronized state is the global optimal solution. However, the estimation of $\max_{1\leq i\leq n}\left\|\sum_{j\neq i} \BDelta_{ij}\BS_j\right\|_{\op} $ is not tight which leads to the suboptimal bound in the main theorems. The major difficulty results from the complicated statistical dependence between $\BDelta$ and any second-order critical points $\BS$. This is well worth further investigation for $O(d)$.

\end{remark}
Now we present two propositions which demonstrate that any global maximizers and second-order critical points to~\eqref{def:BM} satisfy~\eqref{eq:delta} for some $\delta > 0$.

\paragraph{(i) Convex relaxation:}
For the tightness of SDP relaxation, we show that the global maximizer to~\eqref{def:OD} must satisfy~\eqref{eq:delta} with $\delta = 4$.

\begin{proposition}[\bf Proximity condition for convex relaxation]\label{prop:SZ}
The global maximizers to~\eqref{def:BM} satisfy
\[
d_F(\BS,\BZ) \leq \delta\sqrt{\frac{d}{n}}\|\BDelta\|_{\op}, \quad \delta = 4.
\]
\end{proposition}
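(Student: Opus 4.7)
}
The plan is to exploit the global optimality of $\BS$ by comparing $f(\BS)$ against $f(\BZ\BQ)$, where $\BQ = \PP(\BZ^{\top}\BS)\in\RR^{d\times p}$ is the partial orthogonal matrix that attains $d_F(\BS,\BZ)$. Since $\BQ\BQ^{\top}=\I_d$, the ``block-constant'' matrix $\BZ\BQ$ lies in $\St(d,p)^{\otimes n}$ and satisfies $(\BZ\BQ)(\BZ\BQ)^{\top}=\BZ\BZ^{\top}$; consequently $f(\BS)\geq f(\BZ\BQ)$. Writing $\BA=\BZ\BZ^{\top}+\BDelta$ and using $\lag\BZ\BZ^{\top},\BS\BS^{\top}\rag=\|\BZ^{\top}\BS\|_F^2$ together with $\|\BZ\BZ^{\top}\|_F^2=n^2 d$, the inequality $f(\BS)-f(\BZ\BQ)\geq 0$ rearranges to
\[
n^2 d-\|\BZ^{\top}\BS\|_F^2 \;\leq\; \lag \BDelta,\ \BS\BS^{\top}-\BZ\BZ^{\top}\rag.
\]

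Next I would turn the left-hand side into a quadratic lower bound in $d_F(\BS,\BZ)$. Let $\sigma_1,\dots,\sigma_d$ denote the singular values of $\BZ^{\top}\BS\in\RR^{d\times p}$. The choice $\BQ=\PP(\BZ^{\top}\BS)$ makes $\lag\BZ^{\top}\BS,\BQ\rag=\|\BZ^{\top}\BS\|_*$, so the identity $\|\BS\|_F^2=\|\BZ\BQ\|_F^2=nd$ yields $d_F(\BS,\BZ)^2 = 2nd-2\sum_k\sigma_k$. Because $\|\BZ^{\top}\BS\|_{\op}\leq\sum_{j}\|\BS_j\|_{\op}=n$, every $\sigma_k$ lies in $[0,n]$; therefore
\[
n^2 d - \|\BZ^{\top}\BS\|_F^2 \;=\; \sum_{k=1}^{d}(n-\sigma_k)(n+\sigma_k) \;\geq\; n\sum_k (n-\sigma_k) \;=\; \tfrac{n}{2}\,d_F(\BS,\BZ)^2.
\]

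For the noise term, I would set $\BE:=\BS-\BZ\BQ$, note $\|\BE\|_F=d_F(\BS,\BZ)$, and use the telescoping identity
\[
\BS\BS^{\top}-\BZ\BZ^{\top} \;=\; \BS\BE^{\top} + \BE(\BZ\BQ)^{\top}.
\]
Applying the Schatten--H\"older bound $\|AB\|_*\leq\|A\|_F\|B\|_F$ together with $\|\BS\|_F=\|\BZ\BQ\|_F=\sqrt{nd}$ gives $\|\BS\BS^{\top}-\BZ\BZ^{\top}\|_*\leq 2\sqrt{nd}\,d_F(\BS,\BZ)$, so
\[
\bigl|\lag\BDelta,\BS\BS^{\top}-\BZ\BZ^{\top}\rag\bigr| \;\leq\; \|\BDelta\|_{\op}\cdot 2\sqrt{nd}\,d_F(\BS,\BZ).
\]
Chaining the two estimates produces $\tfrac{n}{2}d_F^2\leq 2\sqrt{nd}\,\|\BDelta\|_{\op}\,d_F$, which rearranges to the announced bound $d_F(\BS,\BZ)\leq 4\sqrt{d/n}\,\|\BDelta\|_{\op}$, i.e.\ $\delta=4$.

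The conceptual content is light; the only real obstacle is selecting the correct linearization of the PSD difference $\BS\BS^{\top}-\BZ\BZ^{\top}$. If one expands instead as $\BZ\BQ\BE^{\top}+\BE(\BZ\BQ)^{\top}+\BE\BE^{\top}$ and bounds the quadratic $\BE\BE^{\top}$ term separately by $\|\BDelta\|_{\op}\|\BE\|_F^2$, one has to absorb a $\|\BDelta\|_{\op}d_F^2$ term back into $\tfrac{n}{2}d_F^2$, worsening the constant to $\delta=8$. Writing the difference directly as $\BS\BE^{\top}+\BE(\BZ\BQ)^{\top}$ and applying the Schatten--H\"older inequality eliminates that loss and yields the sharp value $\delta=4$.
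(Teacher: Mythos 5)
Your proof is correct and follows essentially the same route as the paper: compare $f(\BS)$ against $f(\BZ\BQ)$, convert $n^2d-\|\BZ^{\top}\BS\|_F^2$ into $\tfrac{n}{2}d_F^2$ using $\sigma_k\in[0,n]$, and bound the noise term by $2\sqrt{nd}\,\|\BDelta\|_{\op}\,d_F$. The only cosmetic difference is in the decomposition of $\BS\BS^{\top}-\BZ\BZ^{\top}$: the paper uses the symmetric-looking factorization $(\BS-\BZ\BQ)(\BS+\BZ\BQ)^{\top}$, which equals $\BS\BS^{\top}-\BZ\BZ^{\top}$ \emph{only up to an anti-symmetric remainder} and hence quietly relies on the symmetry of $\BDelta$ to kill that remainder in the inner product, whereas your identity $\BS\BE^{\top}+\BE(\BZ\BQ)^{\top}=\BS\BS^{\top}-\BZ\BZ^{\top}$ is exact and needs no symmetry assumption; both yield the same final bound with $\delta=4$.
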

This proposition essentially ensures that any global maximizer to~\eqref{def:BM} is close to the fully synchronized state and its distance depends on the noise strength.

\paragraph{(ii) Low-rank approach:}
For the Burer-Monteiro approach, we prove that if $p\geq 2d+1$, all the local maximizers of~\eqref{def:BM} satisfy~\eqref{eq:delta} with $\delta$ which depends on $p$, $d$, and $\gamma$.
\begin{proposition}[\bf Proximity condition for low-rank approach]\label{prop:SZBM}
Suppose $p \geq 2d+1$. All the second-order critical points $\BS$ of $f(\BS)$ in~\eqref{def:BM} satisfy
\[
d_F(\BS,\BZ) \leq \delta\sqrt{\frac{d}{n}}\|\BDelta\|_{\op}, \quad \delta =  \frac{(2+\sqrt{5})(p+ d)\gamma}{p-2d}
\]
where
\[
\gamma : = \frac{\|\Tr_d(\BDelta)\|_{\op}}{\|\BDelta\|_{\op}} \vee 1
\]
and $\Tr_d(\BDelta) = [\Tr(\BDelta_{ij})]_{ij}\in\RR^{n\times n}$ denotes the partial trace of $\BDelta.$
\end{proposition}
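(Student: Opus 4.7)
The plan is to extract a quadratic upper bound on $d_F(\BS,\BZ)$ directly from the Riemannian second-order necessary condition (SONC) at an SOCP $\BS$, by plugging in a tangent vector that encodes the deviation of $\BS$ from the fully synchronized state $\BZ\BQ$. First I would write down the Riemannian Hessian of $f$ on $\St(d,p)^{\otimes n}$. By Lemma~\ref{lem:firstcond}, any first-order critical point satisfies $\BA\BS = \BLambda\BS$ with $\BLambda = \blkdiag(\BLambda_{11},\ldots,\BLambda_{nn})$, so the SONC $\Hess f(\BS)[\dot\BS,\dot\BS] \leq 0$ simplifies to
\[
\lag \BA, \dot\BS\dot\BS^\top\rag \;\leq\; \sum_{i=1}^n \lag \BLambda_{ii}, \dot\BS_i \dot\BS_i^\top\rag
\]
for every tangent vector $\dot\BS = \{\dot\BS_i\}$ satisfying $\dot\BS_i\BS_i^\top + \BS_i\dot\BS_i^\top = 0$.

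Second, I would choose the tangent vector $\dot\BS_i := \BQ(\I_p - \BS_i^\top\BS_i)$, where $\BQ = \PP(\BZ^\top\BS)$ is the best orthogonal alignment of $\BS$ with the synchronized state. This is tangent because $\dot\BS_i\BS_i^\top = 0$, and the hypothesis $p \geq 2d+1$ is essential here: the image of $\BS_i^\top$ has dimension $d$ while $\BQ^\top$ has rank $d$, so the orthogonal complement of $\Ran(\BS_i^\top)$ in $\RR^p$ must retain a faithful enough projection of $\BQ^\top$ to expose the full deviation $\BS_i-\BQ$. This slack of $p-2d$ dimensions is precisely what later surfaces as the $(p-2d)$ factor in the denominator of $\delta$.

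Third, I would substitute this tangent vector into the SONC inequality and split $\BA = \BZ\BZ^\top + \BDelta$. On the left, the identity $\sum_{i,j} \BP_i\BP_j = \bigl(\sum_i \BP_i\bigr)^2$ with $\BP_i := \I_p - \BS_i^\top\BS_i$ yields $\lag\BZ\BZ^\top, \dot\BS\dot\BS^\top\rag = \bigl\|\BQ\sum_i \BP_i\bigr\|_F^2$, which via an SVD argument can be lower bounded by a positive multiple of $(p-2d)\,d_F(\BS,\BZ)^2$; crucially, this uses the optimality of $\BQ = \PP(\BZ^\top\BS)$, which forces $\sum_i \BS_i\BQ^\top$ to be symmetric positive semidefinite. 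The noise term $\lag\BDelta,\dot\BS\dot\BS^\top\rag$ is bounded by $\|\BDelta\|_{\op}\|\dot\BS\|_F^2$, and $\|\dot\BS\|_F^2 = \sum_i (d - \|\BS_i\BQ^\top\|_F^2)$ is in turn controlled by $d_F(\BS,\BZ)\sqrt{nd}$. On the right-hand side, expanding $\BLambda_{ii}$ via formula~\eqref{def:Lambda}, the signal contribution yields a $(p+d)$-type coefficient through the combinatorics of tangent/normal projections, while the noise contribution produces inner products of $\BDelta_{ij}$ against blocks proportional to $\BQ\BQ^\top = \I_d$; these diagonal contractions assemble exactly into the partial trace $\Tr_d(\BDelta)$, which is why $\gamma = \|\Tr_d(\BDelta)\|_{\op}/\|\BDelta\|_{\op} \vee 1$ enters.

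After combining all estimates, the SONC inequality becomes a quadratic inequality of the schematic form
\[
(p-2d)\,d_F(\BS,\BZ)^2 \;\lesssim\; (p+d)\gamma\|\BDelta\|_{\op}\sqrt{nd}\,d_F(\BS,\BZ) + \text{lower-order terms},
\]
and the quadratic formula then delivers the bound with constant $2+\sqrt 5$ arising naturally from the discriminant. The hard part will be the bookkeeping on the right-hand side: cleanly isolating the $(p-2d)$ coefficient from the signal term $\bigl\|\BQ\sum_i\BP_i\bigr\|_F^2$ (which requires exploiting the semidefinite structure of $\sum_i\BS_i\BQ^\top$ rather than applying Cauchy–Schwarz directly), and extracting the partial trace $\Tr_d(\BDelta)$ from the noise contractions against identity blocks without losing a factor of $\sqrt n$. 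A secondary difficulty will be handling the cross-terms of the form $\sum_j \lag \BDelta_{ij}, \BS_i\BS_j^\top\BQ\BQ^\top\rag$ that appear in $\sum_i\lag\BLambda_{ii},\dot\BS_i\dot\BS_i^\top\rag$, where the first-order condition must be used in concert with the tangent decomposition to prevent the noise contribution from swamping the signal.
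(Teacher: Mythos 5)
Your proposal diverges from the paper's proof at the choice of tangent vector, and this difference is not cosmetic --- it introduces a genuine gap. The paper proves Proposition~\ref{prop:SZBM} via Lemma~\ref{lem:BM2nd}, which plugs a \emph{random} Gaussian tangent $\dot\BS_i = \BPhi(\I_p - \BS_i^\top\BS_i)$ (with the \emph{same} $\BPhi\in\RR^{d\times p}$ for all $i$) into the second-order condition and takes expectations. Because $\E[\BPhi\BM\BPhi^\top] = \Tr(\BM)\,\I_d$, this collapses $\E\,\dot\BS_i\dot\BS_j^\top$ to the scalar-times-identity $(p-2d+\|\BS_i\BS_j^\top\|_F^2)\I_d$, which is where the ambient dimension $p$ and hence the factor $p-2d$ enter. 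That inequality is then combined with the first-order consequence $\|\BS\BS^\top\|_F^2\geq \|\BZ^\top\BS\|_F^2 - n^{-1}\|\BDelta\BS\|_F^2$ (your Lemma~\ref{lem:BM1st} step is right) and with the Hadamard-product estimate Lemma~\ref{lem:hada} to isolate $\Tr_d(\BDelta)$.

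Your deterministic choice $\dot\BS_i := \BQ(\I_p - \BS_i^\top\BS_i)$ cannot deliver the key estimate, for a reason you can verify directly: if $\BS_i = \BO_i\BQ$ with $\BO_i\in O(d)$ (i.e.\ every $\BS_i$ shares the row space of $\BQ$ but is rotated inside it), then $\I_p - \BS_i^\top\BS_i = \I_p - \BQ^\top\BQ$ and
\[
\dot\BS_i = \BQ(\I_p - \BQ^\top\BQ) = \BQ - \BQ\BQ^\top\BQ = 0 \quad\text{for every } i.
\]
So the second-order inequality you want to exploit reads $0\geq 0$ and yields nothing, yet $d_F(\BS,\BZ) = \bigl(\sum_i\|\BO_i\BQ - \BQ\|_F^2\bigr)^{1/2}$ can be as large as $2\sqrt{nd}$ when the $\BO_i$ are spread out. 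These intra-row-space rotations are exactly the degrees of freedom that make $O(d)$ synchronization ($d\geq 2$) harder than $\mathbb{Z}_2$, and the paper's random-tangent trick is what lets the SONC ``see'' them: with the Gaussian average, the cross-term $\E\dot\BS_i\dot\BS_j^\top = (p-2d+\|\BS_i\BS_j^\top\|_F^2)\I_d$ still engages $\BA_{ij}$ even when all row spaces agree, and with zero noise the resulting Lemma~\ref{lem:BM2nd} forces $\|\BZ^\top\BS\|_F^2\geq n^2 d$, ruling out any nontrivial choice of $\BO_i$.

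For the same reason, your claimed lower bound $\lag\BZ\BZ^\top,\dot\BS\dot\BS^\top\rag = \|\BQ\sum_i\BP_i\|_F^2 \gtrsim (p-2d)\,d_F(\BS,\BZ)^2$ cannot hold: take $\BS_i = \BO_i\BQ$ and the left side vanishes while the right side does not. More structurally, $\BQ$ has rank $d$, so $\BQ\BM\BQ^\top$ only sees the restriction of $\BM = \sum_i\BP_i$ to a $d$-dimensional subspace; no $(p-2d)$ can emerge from such a contraction. The ``combinatorics of tangent/normal projections'' you invoke for the $(p+d)$ coefficient on the $\BLambda$ side has the same problem. To salvage a deterministic-tangent argument you would at minimum need an additional family of tangents spanning the intra-row-space rotations (skew-symmetric directions $\BK_i\BS_i$ with $\BK_i$ antisymmetric), and it is not clear how to combine them to reproduce the clean $(p-2d)/(p+d)$ bookkeeping. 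The Gaussian-tangent averaging in Lemma~\ref{lem:BM2nd} packages all of this at once, which is why the paper takes that route.
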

\begin{remark}
If $\BDelta$ is a symmetric Gaussian random matrix, then $\Tr_d(\BDelta)$ is an $n\times n$ Gaussian random matrix whose entry is $\mathcal{N}(0,d)$ and $\|\Tr_d(\BDelta)\|_{\op} = (1 + o(1))\|\BDelta\|_{\op}$ holds.
\end{remark}
\vskip0.25cm

We defer the proof of Proposition~\ref{prop:dual},~\ref{prop:keys},~\ref{prop:SZ} and~\ref{prop:SZBM} to Section~\ref{s:propdual},~\ref{s:propkeys},~\ref{s:propSZ} and~\ref{s:propSZBM} respectively. Now we provide a proof of Theorem~\ref{thm:cvx} and~\ref{thm:bm} by using the aforementioned propositions.

\begin{proof}[\bf Proof of Theorem~\ref{thm:cvx}]
To prove the tightness of convex relaxation, we first consider the global maximizer to~\eqref{def:BM} which is also a second-order critical point. By Proposition~\ref{prop:SZ}, we have $d_F(\BS,\BZ) \leq \delta\sqrt{n^{-1}d}\|\BDelta\|_{\op}$ with $\delta = 4.$  With Proposition~\ref{prop:keys}, we immediately have Theorem~\ref{thm:cvx}.
\end{proof}

\begin{proof}[\bf Proof of Theorem~\ref{thm:bm}]
To analyze the landscape of~\eqref{def:BM}, we invoke Proposition~\ref{prop:SZBM} which states that~\emph{all} the second-order critical points (SOCP) are essentially close to the fully synchronized state. Now it suffices to show that all SOCPs are global maximizers to~\eqref{def:CVX} and~\eqref{def:OD} and the global maximizer is unique under the assumption of Theorem~\ref{thm:bm}. This is fortunately guaranteed by Proposition~\ref{prop:keys}. 
\end{proof}

\subsection{Riemannian gradient and Hessian matrix}\label{s:gradhess}

We start with analyzing the SOCPs of $f(\BS)$ by first computing its Riemannian gradient and Hessian. The calculation involves the tangent space at $\BS_i\in\St(d,p)$ which is given by
\begin{equation}\label{def:Tst}
T_{\BS_i}({\cal M}) := \{\BY_i\in\RR^{d\times p}: \BS_i\BY_i^{\top} + \BY_i\BS_i^{\top} = 0\}, \quad {\cal M} : = \St(d,p).
\end{equation}
In other words, $ \BS_i\BY_i^{\top}$ is an anti-symmetric matrix if $\BY_i\in\RR^{d\times p}$ is an element in the tangent space.

\begin{proof}[\bf Proof of Lemma~\ref{lem:firstcond}]

Recall the objective function $f(\BS) =  \sum_{i=1}^n\sum_{j=1}^n \lag \BS_i,\BA_{ij}\BS_j \rag$ in~\eqref{def:BM}
where $\BS_i\in\St(d,p)$. We take the gradient w.r.t. $\BS_i$ in the Euclidean space.
\[
\frac{\pa f}{\pa \BS_i} = \sum_{j\neq i} \BA_{ij}\BS_j.
\]
The Riemannian gradient w.r.t. $\BS_i$ is
\begin{equation}\label{eq:grad}
\nabla_{\BS_i}f  = \text{Proj}_{T_{\BS_i}({\cal M})}\left( \sum_{j\neq i} \BA_{ij}\BS_j\right)  =\sum_{j=1}^n \BA_{ij}\BS_j - \frac{1}{2}\left(  \sum_{j=1}^n\BS_i\BS_j^{\top}\BA_{ji} + \BA_{ij}\BS_j\BS_i^{\top} \right)\BS_i
\end{equation}
by projecting $\frac{\pa f}{\pa \BS_i}$ onto the tangent space $T_{\BS_i}({\cal M})$ at $\BS$, as shown in~\cite[Equation (3.35)]{AMS09}:
\[
\text{Proj}_{T_{\BS_i}({\cal M})}\left(\BPi\right) = \BPi - \frac{1}{2}\left(\BPi\BS_i^ {\top} + \BS_i\BPi^{\top}\right)\BS_i
\]
where $\BPi\in\RR^{d\times p}$ and the matrix manifold ${\cal M}$  is $\St(d,p).$

Setting $\nabla_{\BS_i}f  = 0$ gives $\BLambda_{ii}$ in~\eqref{def:Lambda} and
\[
\sum_{j=1}^n\BA_{ij}\BS_j  = \BLambda_{ii}\BS_i, 
\]
In other words, 
$(\BLambda - \BA) \BS = 0$
where $\BLambda = \text{blkdiag}(\BLambda_{11}, \cdots,\BLambda_{nn}).$
\end{proof}

Next, we compute the Riemannian Hessian and prove that  $\BLambda\succeq 0$ for any second order critical point. 

\begin{lemma}\label{lem:Hess}
The quadratic form associated to the Hessian matrix of~\eqref{def:BM} is
\[
\dot{\BS} : \nabla^2_{\pa \BS\pa\BS} f(\BS) :\dot{\BS} = - \sum_{i=1}^n \lag \BLambda_{ii}, \dot{\BS}_i\dot{\BS}_i^{\top} \rag + \sum_{i=1}^n\sum_{j=1}^n \lag\BA_{ij}, \dot{\BS}_i\dot{\BS}_j^{\top}\rag
\]
where $\dot{\BS}^{\top} = [\dot{\BS}_1^{\top},\cdots,\dot{\BS}_n^{\top}]\in\RR^{p\times nd}$ and $\dot{\BS}_i\in\RR^{d\times p}$ is an element on the tangent space of Stiefel manifold at $\BS_i.$ In other words, if $\BS$ is a second order critical point, it must satisfy:
\begin{equation}\label{cond:2ndpsd}
\sum_{i=1}^n \lag \BLambda_{ii}, \dot{\BS}_i\dot{\BS}_i^{\top}\rag \geq  \sum_{i=1}^n\sum_{j=1}^n\lag\BA_{ij}, \dot{\BS}_i\dot{\BS}_j^{\top}\rag.
\end{equation}
\end{lemma}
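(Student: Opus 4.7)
My plan is to compute the Riemannian Hessian of $f$ on the product Stiefel manifold $\St(d,p)^{\otimes n}$ by differentiating $f$ twice along a geodesic; since a geodesic $\gamma$ satisfies $\nabla_{\dot\gamma}\dot\gamma = 0$, the Riemannian Hessian quadratic form equals $\frac{d^2}{dt^2}|_{t=0} f(\gamma(t))$ directly (up to the global factor of two that is conventionally absorbed into the notation $\dot{\BS}:\nabla^2 f:\dot{\BS}$).

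Fix a tangent vector $\dot{\BS}$ with $\dot{\BS}_i\in T_{\BS_i}\mathcal{M}$ and a smooth curve $\gamma(t)$ on $\St(d,p)^{\otimes n}$ with $\gamma(0)=\BS$, $\dot\gamma(0)=\dot{\BS}$, and write $\BC := \ddot\gamma(0)$. Differentiating the manifold constraint $\gamma_i(t)\gamma_i(t)^\top = \I_d$ once at $t=0$ reproduces the tangent-space relation~\eqref{def:Tst}, and a second differentiation yields $\BS_i\BC_i^\top + \BC_i\BS_i^\top = -2\dot{\BS}_i\dot{\BS}_i^\top$. Since $f(\BS) = \sum_{i,j}\langle \BA_{ij}, \BS_i\BS_j^\top\rangle$ is quadratic in the ambient space, direct differentiation (using $\BA_{ji} = \BA_{ij}^\top$ to merge the two $\BC$-containing cross terms into one) gives
\[
\tfrac{1}{2}\tfrac{d^2}{dt^2}\bigg|_{t=0} f(\gamma(t)) \;=\; \sum_{i,j}\langle \BA_{ij}, \dot{\BS}_i\dot{\BS}_j^\top\rangle \;+\; \sum_i \bigg\langle \sum_{j}\BA_{ij}\BS_j,\,\BC_i\bigg\rangle.
\]

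To eliminate $\BC$, I choose $\gamma$ to be a geodesic so that $\ddot\gamma(0)$ is purely normal; combined with the second-order constraint this forces $\BC_i = -\dot{\BS}_i\dot{\BS}_i^\top\BS_i$. After substitution and cyclic trace manipulation the $\BC$-term becomes $-\Tr\big(\sum_j \BS_i\BS_j^\top\BA_{ji}\cdot\dot{\BS}_i\dot{\BS}_i^\top\big)$; because $\dot{\BS}_i\dot{\BS}_i^\top$ is symmetric, only the symmetric part of $\sum_j\BS_i\BS_j^\top\BA_{ji}$ survives inside the trace, and this symmetric part is precisely $\BLambda_{ii}$ from~\eqref{def:Lambda}. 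The $\BC$-term therefore collapses to $-\sum_i\langle\BLambda_{ii},\dot{\BS}_i\dot{\BS}_i^\top\rangle$, producing the claimed expression for the Hessian quadratic form. Condition~\eqref{cond:2ndpsd} then follows at once from the standard second-order necessary condition: at a local maximizer of $f$ on the manifold, the Riemannian Hessian must be negative semidefinite on the tangent space, which is precisely the stated inequality after rearrangement.

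The only delicate point is ensuring the acceleration $\BC$ drops cleanly out of the final answer; picking a geodesic avoids the extra correction $\langle\nabla_{\BS}f,\,\text{Proj}_{T_\BS}(\BC)\rangle$ that would appear for a generic curve. An equivalent alternative is to differentiate the explicit Riemannian gradient $\sum_j\BA_{ij}\BS_j - \BLambda_{ii}\BS_i$ from~\eqref{eq:grad} in the direction $\dot{\BS}$ and pair back with $\dot{\BS}_i$, handling the Weingarten-type correction coming from the $\BS$-dependence of the tangent-space projection; both routes yield the same quadratic form.
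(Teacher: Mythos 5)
Your proof is correct, and it takes a genuinely different route from the paper's. The paper computes the Hessian quadratic form by formally differentiating the explicit Riemannian gradient in~\eqref{eq:grad} (obtained by tangent-space projection of the Euclidean gradient) in the direction $\dot{\BS}$ and then pairing with $\dot{\BS}$; the projection's dependence on $\BS_i$ is what produces the $\BLambda_{ii}$ term, and the antisymmetry relation $\BS_i\dot{\BS}_i^\top + \dot{\BS}_i\BS_i^\top = 0$ kills the remaining cross terms. You instead invoke the geodesic characterization $\operatorname{Hess} f[\dot{\BS},\dot{\BS}] = \frac{d^2}{dt^2}\big|_{t=0} f(\gamma(t))$ along a geodesic $\gamma$, derive the normal acceleration $\BC_i = -\dot{\BS}_i\dot{\BS}_i^\top\BS_i$ from the second-order constraint $\BC_i\BS_i^\top + \BS_i\BC_i^\top = -2\dot{\BS}_i\dot{\BS}_i^\top$ together with the form of the normal space $\{\BW\BS_i : \BW = \BW^\top\}$ for the embedded metric, and substitute. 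Your observation that only the symmetric part of $\sum_j \BS_i\BS_j^\top \BA_{ji}$ survives the trace against the symmetric matrix $\dot{\BS}_i\dot{\BS}_i^\top$, and that this symmetric part is precisely $\BLambda_{ii}$, is exactly the right mechanism. The factor-of-two bookkeeping you flag is real but consistent with the paper's convention: the paper already writes $\frac{\partial f}{\partial\BS_i} = \sum_{j\neq i}\BA_{ij}\BS_j$, which is half the literal ambient derivative of the quadratic $f$, so its ``Hessian'' is correspondingly $\frac{1}{2}\frac{d^2}{dt^2}f$, matching your expression. The geodesic route avoids the Weingarten-type correction you would otherwise have to track (and which the paper implicitly absorbs via the product-rule term acting on the $\BS$-dependent projection), at the small cost of needing the explicit normal space of the Stiefel manifold; the paper's route avoids geodesics entirely. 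Both are standard and yield the same quadratic form, and your final step---negative semidefiniteness of the Riemannian Hessian at a local maximizer---is the same as the paper's.
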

\begin{proof}
Recall the Riemannian gradient w.r.t. $\BS_i$ is given by
\begin{align*}
\nabla_{\BS_i}f &  = \sum_{j\neq i} \BA_{ij}\BS_j - \frac{1}{2}\left(  \sum_{j\neq i}  \BS_i\BS_j^{\top}\BA_{ji} + \BA_{ij}\BS_j\BS_i^{\top}\right)\BS_i.
\end{align*}
Let $\dot{\BS}_i$ be a matrix on the tangent space at $\BS_i$:
\[
\lim_{t\rightarrow 0}\frac{\nabla_{\BS_i}f(\BS + t\dot{\BS}_i) - \nabla_{\BS_i}f(\BS) }{t} = -( \BLambda_{ii} -\I_d) \dot{\BS}_i  - \frac{1}{2} \sum_{j\neq i}(\dot{\BS}_i\BS_j^{\top}\BA_{ji} + \BA_{ij}\BS_j\dot{\BS}_i^{\top})\BS_i
\]
where $(\BS+t\dot{\BS}_{i})^{\top} = [\BS_1^{\top}, \cdots, (\BS_i + t\dot{\BS}_i)^{\top}, \cdots, \BS_n^{\top}]$ and $\BS_i\BS_i^{\top} = \I_d.$ 
As a result, the quadratic form associated to the Riemannian Hessian  is
\begin{align*}
-\dot{\BS}_i : \nabla^2_{\pa \BS_i\pa \BS_i} f(\BS) : \dot{\BS}_i 
& = \left\lag (\BLambda_{ii} -\I_d)\dot{\BS}_i  + \frac{1}{2}\sum_{j\neq i} (\dot{\BS}_i\BS_j^{\top} \BA_{ji} + \BA_{ij}\BS_j\dot{\BS}_i^{\top} )\BS_i, \dot{\BS}_i \right\rag \\
& = \lag \BLambda_{ii} -\I_d , \dot{\BS}_i\dot{\BS}_i^{\top} \rag 
+ \frac{1}{2} \sum_{j\neq i} \lag \dot{\BS}_i\BS_j^{\top}\BA_{ji}, \dot{\BS}_i\BS_i^{\top}\rag 
+ \frac{1}{2} \sum_{j\neq i}\lag \BA_{ij}\BS_j\dot{\BS}_i^{\top}, \dot{\BS}_i\BS_i^{\top}\rag \\
& = \lag \BLambda_{ii} -\I_d, \dot{\BS}_i\dot{\BS}_i^{\top} \rag 
+ \frac{1}{2} \sum_{j\neq i} \lag \BA_{ij}\BS_j\dot{\BS}_i^{\top}, \BS_i\dot{\BS}_i^{\top} +\dot{\BS}_i\BS_i^{\top} 
\rag  \\
& = \lag \BLambda_{ii} -\I_d, \dot{\BS}_i\dot{\BS}_i^{\top}\rag 
\end{align*}
where $\BS_i\dot{\BS}_i^{\top} +\dot{\BS}_i\BS_i^{\top}  = 0$ since $\dot{\BS}_i$ is on $T_{\BS_i}({\cal M})$.

For the mixed partial derivative, we have
\[
\lim_{t\rightarrow 0} \frac{\nabla_{\BS_i}f(\BS+t\dot{\BS}_j) - \nabla_{\BS_i}f(\BS)}{t} = \BA_{ij}\dot{\BS_j} - \frac{1}{2}\left( \BS_i\dot{\BS}_j^{\top} \BA_{ji} + \BA_{ij}\dot{\BS}_j\BS_i^{\top}\right)\BS_i
\]
for $j\neq i$.
Thus
\begin{align*}
\dot{\BS}_i : \nabla^2_{\pa \BS_i\pa \BS_j}f(\BS) :\dot{\BS}_j & = \left\lag \BA_{ij}\dot{\BS_j} - \frac{1}{2}\left( \BS_i\dot{\BS}_j^{\top} \BA_{ji} + \BA_{ij}\dot{\BS}_j\BS_i^{\top}\right)\BS_i , \dot{\BS}_i\right\rag \\
& = \lag \BA_{ij}\dot{\BS}_j, \dot{\BS}_i\rag - \frac{1}{2}\lag  \BS_i\dot{\BS}_j^{\top}  \BA_{ji} + \BA_{ij}\dot{\BS}_j\BS_i^{\top}, \dot{\BS}_i \BS_i^{\top} \rag \\
& = \lag \BA_{ij}\dot{\BS}_j, \dot{\BS}_i\rag - \frac{1}{2}\lag  \BA_{ij}\dot{\BS}_j\BS_i^{\top}, \BS_i\dot{\BS}_i^{\top} + \dot{\BS}_i\BS_i^{\top} \rag \\
& =  \lag \BA_{ij}\dot{\BS}_j, \dot{\BS}_i\rag.
\end{align*}
Taking the sum of $\dot{\BS}_i : \nabla^2_{\pa \BS_i\pa \BS_j}f(\BS) :\dot{\BS}_j $ over $(i,j)$ gives
\[
\dot{\BS}: \nabla^2_{\pa \BS\pa \BS}f : \dot{\BS} = -\sum_{i=1}^n\lag \BLambda_{ii}, \dot{\BS}_i\dot{\BS}_i^{\top}\rag + \sum_{i=1}^n\sum_{j=1}^n \lag \BA_{ij}, \dot{\BS}_i\dot{\BS}_j^{\top} \rag.
\]
If $\BS$ is a local maximizer of~\eqref{def:BM}, then $\dot{\BS}: \nabla^2_{\pa \BS\pa \BS}f : \dot{\BS} \leq 0$ holds for any $\dot{\BS}\in (T_{\BS_i}({\cal M}))^{\otimes n}.$
\end{proof}

Suppose $\BS$ is a local maximizer of~\eqref{def:BM}, then~\eqref{cond:2ndpsd} implies that
\[
\lag \BLambda_{ii} - \I_d, \dot{\BS}_i\dot{\BS}_i^{\top} \rag  \geq 0, \quad \dot{\BS}_i\in T_{\BS_i}({\cal M}).
\]
Does it imply that $\BLambda_{ii} \succeq \I_d$? The answer is yes if $p > d$. However, this is not longer true if $p = d.$ For $p = d$, we are only able to prove that the sum of the smallest two eigenvalues is nonnegative. 

\begin{lemma}\label{lem:Lambdapos}
Suppose $\BS$ is a local maximizer, then it holds 
\[
\BLambda_{ii} \succeq \I_d, \quad 1\leq i\leq n.
\]
\end{lemma}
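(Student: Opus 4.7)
The plan is to restrict the second-order necessary condition from Lemma~\ref{lem:Hess} to ``block-diagonal'' test directions that isolate a single index $i$. Concretely, I fix $i$, set $\dot{\BS}_j = 0$ for all $j \neq i$, and let $\dot{\BS}_i$ range over the tangent space $T_{\BS_i}({\cal M})$. Because $\BA_{ii} = \I_d$, the only surviving contributions to the Hessian quadratic form are the diagonal ones at index $i$, so Lemma~\ref{lem:Hess} collapses to $-\langle \BLambda_{ii}, \dot{\BS}_i \dot{\BS}_i^\top\rangle + \langle \I_d, \dot{\BS}_i\dot{\BS}_i^\top\rangle$. Local maximality then forces
\[
\langle \BLambda_{ii} - \I_d, \dot{\BS}_i \dot{\BS}_i^\top \rangle \geq 0 \qquad \text{for every } \dot{\BS}_i \in T_{\BS_i}({\cal M}).
\]

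The next step is to show that the set of matrices $\dot{\BS}_i \dot{\BS}_i^\top$ obtained this way already contains every rank-one PSD matrix $\bv \bv^\top$ with $\bv \in \RR^d$. Using the standard tangent-space parametrization $\dot{\BS}_i = \BA\BS_i + \BB$ with $\BA \in \RR^{d\times d}$ skew-symmetric and $\BB\BS_i^\top = 0$, a short computation using $\BS_i\BS_i^\top = \I_d$ gives $\dot{\BS}_i \dot{\BS}_i^\top = \BA\BA^\top + \BB\BB^\top$. Since the Burer--Monteiro setup requires $p > d$, the kernel of $\BS_i \colon \RR^p \to \RR^d$ is nontrivial; picking any unit vector $\bu \in \Null(\BS_i)$ and setting $\BA = 0$, $\BB = \bv\bu^\top$ yields $\dot{\BS}_i \dot{\BS}_i^\top = \bv\bv^\top$, as desired.

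Substituting back into the displayed inequality gives $\bv^\top (\BLambda_{ii} - \I_d) \bv \geq 0$ for every $\bv \in \RR^d$, hence $\BLambda_{ii} \succeq \I_d$, which is the claim. There is no serious obstacle; the proof is essentially a one-liner once the tangent space is parametrized. The only subtlety worth emphasizing is the role of $p > d$: without strict inequality, $\Null(\BS_i) = \{0\}$ and the feasible Gram matrices reduce to $\BA\BA^\top$ with $\BA$ skew-symmetric, which cannot realize arbitrary rank-one directions. This is precisely the boundary case $p = d$ flagged by the authors, where only a weaker statement about the sum of the two smallest eigenvalues of $\BLambda_{ii} - \I_d$ survives.
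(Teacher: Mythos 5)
Your proof is correct and follows essentially the same route as the paper: zero out $\dot{\BS}_j$ for $j\neq i$ so that \eqref{cond:2ndpsd} reduces (using $\BA_{ii}=\I_d$) to $\langle \BLambda_{ii}-\I_d, \dot{\BS}_i\dot{\BS}_i^\top\rangle\geq 0$, and then realize an arbitrary $\bv\bv^\top$ by choosing $\dot{\BS}_i=\bv\bu^\top$ with $\bu$ a unit vector in $\Null(\BS_i)$ (the paper writes this as $\bu_i\bv_i^\top$ with $\BS_i\bv_i=0$). Your detour through the $\dot{\BS}_i=\BA\BS_i+\BB$ parametrization and the ensuing identity $\dot{\BS}_i\dot{\BS}_i^\top=\BA\BA^\top+\BB\BB^\top$ is a slightly more systematic way to motivate the same rank-one construction, and your remark on the $p=d$ boundary case correctly matches the paper's observation about the sum of the two smallest eigenvalues.
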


\begin{proof}
Note that $\BS_i\in\RR^{d\times p}$ with $p>d$ is a ``fat" matrix. It means we can always find $\bv_i\in\RR^p$ which is perpendicular to all rows of $\BS_i$, i.e., $\BS_i\bv_i = 0$. Without loss of generality, we assume $\bv_i$ is a unit vector. Now we construct $\dot{\BS}_i$ in the following form:
\[
\dot{\BS}_i =\bu_i \bv_i^{\top}
\]
where $\bu_i$ is an arbitrary vector in $\RR^d.$
It is easy to verify that
\[
\BS_i\dot{\BS}_i^{\top}  = \BS_i  \bv_i \bu_i^{\top} = 0,
\]
which means $\dot{\BS}_i$ is indeed an element in the tangent space of $\St(d,p)$ at $\BS_i.$

Now, we have
\[
\lag \BLambda_{ii} - \I_d, \bu_i \bu_i^{\top}  \rag = \lag \BLambda_{ii} - \I_d, \bu_i \bv_i^{\top}\bv_i \bu_i^{\top}  \rag = \lag \BLambda_{ii} - \I_d, \dot{\BS}_i\dot{\BS}_i^{\top} \rag  \geq 0, \quad \forall \bu_i\in\RR^d
\]
which implies that $\BLambda_{ii}- \I_d \succeq 0$ and $\BLambda_{ii}\succeq \I_d.$
\end{proof}

\subsection{Certifying global optimality via dual certificate}\label{s:propdual}
To guarantee the global optimality of a feasible solution, we will employ the standard tools from the literature in compressive sensing and low-rank matrix recovery. The core part is to construct the dual certificate which confirms that the proposed feasible solution and the dual certificate yield strong duality.

\begin{proof}[\bf Proof of Proposition~\ref{prop:dual}]
We start from the convex optimization~\eqref{def:CVX} and derive its dual program.
First introduce the symmetric matrix $\BPi_{ii}\in\RR^{d\times d}$ as the dual variable corresponding to the constraint $\BX_{ii} = \I_d$ and then get the Lagrangian function. Here we switch from maximization to minimization in~\eqref{def:CVX} by changing the sign in the objective function.
\begin{align*}
{\cal L}(\BX, \BPi) & = \sum_{i=1}^n \lag \BPi_{ii}, \BX_{ii} - \I_d\rag - \lag \BA, \BX\rag \\
& = \lag \BPi - \BA, \BX\rag - \Tr(\BPi)
\end{align*}
where $\BPi = \blkdiag(\BPi_{11}, \cdots, \BPi_{nn})\in\RR^{nd\times nd}$ and $\BX\succeq 0$. If $\BPi - \BA$ is not positive semidefinite, taking the infimum w.r.t. $\BX\succeq 0$ for the Lagrangian function gives negative infinity. Thus we require $\BPi - \BA\succeq 0$:
\[
\inf_{\BX\succeq 0} {\cal L}(\BX, \BPi)  = -\Tr(\BPi).
\]
As a result, the dual program of~\eqref{def:CVX} is equivalent to
\[
\min_{\BPi\in\RR^{nd\times nd}} \Tr(\BPi) \quad \text{such that} \quad \BPi - \BA\succeq 0, \quad \BPi \text{ is block-diagonal.}
\]
Weak duality in convex optimization~\cite{BN01} implies that $\Tr(\BPi) \geq \lag \BA,\BX\rag$. Moreover, 
$(\BX, \BPi)$ is a primal-dual optimal solution (not necessarily unique) if the complementary slackness holds
\begin{equation}\label{eq:sdual1}
\lag \BPi- \BA, \BX\rag = 0, \quad \BPi - \BA\succeq 0
\end{equation}
since~\eqref{eq:sdual1} implies strong duality, i.e., $\Tr(\BPi) = \lag \BA, \BX\rag$ since $\BX_{ii} = \I_d.$
In fact, this condition~\eqref{eq:sdual1} is equivalent to
\begin{equation}\label{eq:sdual}
(\BPi- \BA) \BX= 0, \quad \BPi - \BA\succeq 0
\end{equation}
because both $\BPi- \BA $ and $\BX$ are positive semidefinite.

\vskip0.5cm

Let $\BS\in\RR^{nd\times p}$ be a  feasible solution. Suppose there exists an $nd\times nd$ block diagonal matrix $\BLambda$ and satisfies~\eqref{eq:dual}. The global optimality of $\BX = \BS\BS^{\top}$ follows directly from~\eqref{eq:dual} and~\eqref{eq:sdual}. In addition, if the rank of $\BPi - \BA$ is $(n - 1)d$, then the global optimizer to~\eqref{def:CVX} is exactly rank-$d$. This is due to $(\BPi - \BA)\BX = 0$, implying that the rank of $\BX$ is at most $d$ but $\BX_{ii} =\I_d$ guarantees $\rank(\BX)\geq d.$ 
This results in the tightness of~\eqref{def:CVX} since the global optimal solution to the SDP is exactly rank-$d$ and thus must be the global optimal solution to (P) as well.

Now we prove that if $\rank(\BPi - \BA) = (n-1)d$, then $\BX$ is the~\emph{unique} maximizer. Let's prove it by contradiction. If not,  then there exists another global maximizer $\widetilde{\BX}$ such that $\lag \BA, \widetilde{\BX} \rag = \Tr(\BPi)=\lag \BPi, \widetilde{\BX}\rag$ since the feasible solution $\BX$ and $\widetilde{\BX}$ achieve the same primal value due to the linearity of the objective function:
\[
\lag \BPi - \BA, \widetilde{\BX}\rag = 0 \Longrightarrow (\BPi - \BA)\widetilde{\BX} = 0. 
\]
Since $\rank(\BPi- \BA) = (n-1)d$, thus $\rank(\widetilde{\BX}) \leq d$. Note that each diagonal block is $\I_d$ and it implies $\rank(\widetilde{\BX}) = d$. This proves that $\widetilde{\BX}= \BX$ holds (modulo a global rotation in the column space) since $\BX$ and $\widetilde{\BX}$ are determined uniquely by the null space of $\BPi - \BA.$ 
\end{proof}

Proposition~\ref{prop:dual} indicates that in order to show that a first-order critical point of $f(\BS)$ is the unique global maximizer to (P), it suffices to guarantee $\BLambda - \BA\succeq 0$ and $\lambda_{d+1}(\BLambda - \BA) > 0$. This is equivalent to $\rank(\BLambda - \BA) = (n-1)d$ here since the first order necessary condition implies $(\BLambda - \BA)\BS = 0$ for $\BLambda$ defined in~\eqref{def:Lambda} which means at least $d$ eigenvalues of $\BLambda - \BA$ are zero.
Now we can see that the key is to ensure $\BLambda - \BA\succeq 0$ for some first-order critical point $\BS$ (i.e., those critical points which satisfy the proximity condition). Define the certificate matrix
\begin{equation}\label{def:C}
\BC := \BLambda  - \BA, \quad \BC_{ij} = 
\begin{cases}
\BLambda_{ii} - \BA_{ii}, \quad i=j, \\
-\BA_{ij}, \quad i\neq j,
\end{cases}
\end{equation}
for any given $\BS.$
From the definition, we know that any first-order critical points satisfy $\BC \BS = 0.$

\subsection{Proof of Proposition~\ref{prop:keys}}\label{s:propkeys}

\begin{lemma}\label{lem:sigmamin}
Suppose the proximity condition~\eqref{eq:delta} holds, we have
\[
n \geq \sigma_{\max}(\BZ^{\top}\BS)  \geq \sigma_{\min}(\BZ^{\top}\BS) \geq n -\frac{\delta^2d\|\BDelta\|^2_{\op}}{2n}.
\]
\end{lemma}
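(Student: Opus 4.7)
The plan is to reduce the problem to a statement about singular values of the $d\times p$ matrix $\BZ^\top\BS = \sum_{j=1}^n \BS_j$ and then exploit the tightness of the proximity bound via the nuclear norm.

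First, for the upper bound, I would use subadditivity of the operator norm: since each $\BS_j\in\St(d,p)$ satisfies $\|\BS_j\|_{\op}=1$, we immediately get
\[
\sigma_{\max}(\BZ^\top\BS) = \|\textstyle\sum_{j=1}^n \BS_j\|_{\op} \leq \sum_{j=1}^n \|\BS_j\|_{\op} = n.
\]

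Next, I would convert $d_F(\BS,\BZ)^2$ into a nuclear-norm statement. Recalling that $\BQ=\PP(\BZ^\top\BS)$ is the polar factor achieving the minimum in~\eqref{def:df}, and that for the SVD $\BZ^\top\BS=\BU\BSigma\BV^\top$ one has $\BQ=\BU\BV^\top$, I would expand
\[
d_F(\BS,\BZ)^2 = \sum_{i=1}^n \|\BS_i-\BQ\|_F^2 = 2nd - 2\lag \BZ^\top\BS,\BQ\rag = 2nd - 2\|\BZ^\top\BS\|_*,
\]
using $\|\BS_i\|_F^2=\|\BQ\|_F^2=d$ and $\lag \BZ^\top\BS,\BQ\rag=\Tr(\BU\BSigma\BU^\top)=\|\BZ^\top\BS\|_*$. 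Combining with the proximity hypothesis yields
\[
\sum_{k=1}^d \sigma_k(\BZ^\top\BS) = \|\BZ^\top\BS\|_* \geq nd - \tfrac{1}{2}d_F(\BS,\BZ)^2 \geq nd - \tfrac{\delta^2 d\|\BDelta\|_{\op}^2}{2n}.
\]

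Finally, I would combine these two ingredients: every singular value is at most $n$ by the first step, so if the smallest one dropped below $n - \tfrac{\delta^2 d\|\BDelta\|_{\op}^2}{2n}$, the total sum would be strictly less than $(d-1)n + n - \tfrac{\delta^2 d\|\BDelta\|_{\op}^2}{2n} = nd - \tfrac{\delta^2 d\|\BDelta\|_{\op}^2}{2n}$, contradicting the lower bound on the nuclear norm. This yields $\sigma_{\min}(\BZ^\top\BS)\geq n-\tfrac{\delta^2 d\|\BDelta\|_{\op}^2}{2n}$.

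There is no real obstacle here; the only mild subtlety is identifying the optimal $\BQ$ in~\eqref{def:df} with the polar factor so that $\lag \BZ^\top\BS,\BQ\rag$ collapses to the nuclear norm. Everything else is a one-line use of the SVD and a pigeonhole-type bound on singular values.
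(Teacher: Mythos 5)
Your proof is correct and follows essentially the same route as the paper: both convert the proximity bound to the identity $d_F^2(\BS,\BZ)=2\bigl(nd-\|\BZ^\top\BS\|_*\bigr)$ via the polar factor $\BQ$, observe $\sigma_i(\BZ^\top\BS)\leq n$ by subadditivity, and then extract the bound on $\sigma_{\min}$ from $n-\sigma_{\min}\leq\sum_{i=1}^d(n-\sigma_i)=nd-\|\BZ^\top\BS\|_*$, which is exactly your pigeonhole step.
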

This Lemma says that if $\BS$ is sufficiently close to $\BZ$, then $\BZ^{\top}\BS$ is  approximately an identity.
\begin{proof}
Note that 
\[
\|\BS - \BZ\BQ  \|_F^2 = 2nd - 2\lag \BQ, \BZ^{\top}\BS\rag
\]
where $\|\BS\|_F^2 = \|\BZ\BQ\|_F^2 = nd.$
Note that
\[
|\lag \BQ, \BZ^{\top}\BS \rag|\leq \|\BQ\|_{\op}\cdot \|\BZ^{\top} \BS\|_* = \|\BZ^{\top}\BS \|_*
\]
where $\|\BZ^{\top} \BS \|_*$ denotes the nuclear norm of $\BZ^{\top}\BS\in\RR^{d\times p}.$ The maximum is assumed if $\BQ = \BU\BV^{\top}$ where $\BU\in\RR^{d\times d}$ and $\BV\in\RR^{p\times d}$ are the left and right singular vectors of $\BZ^{\top}\BS.$

As a result, we get
\[
d^2_F(\BS,\BZ) = 2(nd - \|\BZ^{\top}\BS\|_*)\leq \frac{\delta^2d\|\BDelta\|^2_{\op}}{n}.
\]
Note that the largest singular value of $\BZ^{\top}\BS$ is at most $n$ which trivially follows from triangle inequality. For the smallest singular value of $\BZ^{\top}\BS$, we use the following inequality
\[
n - \sigma_{\min}(\BZ^{\top}\BS)\leq \sum_{i=1}^d (n - \sigma_i(\BZ^{\top}\BS)) = nd - \|\BZ^{\top}\BS \|_* \leq \frac{\delta^2d\|\BDelta\|^2_{\op}}{2n}
\]
which implies $\sigma_{\min}(\BZ^{\top}\BS) \geq n -2\delta^2n^{-1}d\|\BDelta\|^2_{\op}.$
\end{proof}

\begin{lemma}\label{lem:key}
Suppose a second-order critical point $\BS$ satisfies the proximity condition.
Then 
\begin{align*}
\lambda_{\min}(\BLambda_{ii}) & \geq n - \frac{\delta^2d\|\BDelta\|^2_{\op}}{2n} - \max_{1\leq i\leq n} \left\|  \sum_{j\neq i} \BDelta_{ij}\BS_j \right\|_{\op}, \\
\max_{1\leq i\leq n}\left\|\sum_{j\neq i} \BDelta_{ij}\BS_j \right\|_{\op} & \leq \delta\sqrt{\frac{d}{n}} \|\BDelta\|_{\op}\max_{1\leq i\leq n}\|\BDelta_i\|_{\op}  + \max_{1\leq i\leq n}\left\| \BDelta_{i}^{\top}\BZ\right\|_{\op}
\end{align*}
where $\BDelta_i$ is the $i$th block column of $\BDelta.$
\end{lemma}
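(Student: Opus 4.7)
The plan is to handle the two bounds independently. The second is an elementary splitting, while the first reduces to a perturbation analysis around the symmetric polar factor of $\BM:=\BZ^{\top}\BS=\sum_j\BS_j$, whose smallest singular value is already controlled by Lemma~\ref{lem:sigmamin}.

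For the second bound, note that $\sum_{j\neq i}\BDelta_{ij}\BS_j = \BDelta_i^{\top}\BS$ because $\BDelta_{ii}=0$. Writing $\BS = \BZ\BQ + (\BS-\BZ\BQ)$ and using that $\BQ\in\RR^{d\times p}$ has orthonormal rows (so that $\|\BDelta_i^{\top}\BZ\BQ\|_{\op}=\|\BDelta_i^{\top}\BZ\|_{\op}$), together with $\|\BS-\BZ\BQ\|_{\op}\leq\|\BS-\BZ\BQ\|_F = d_F(\BS,\BZ)\leq\delta\sqrt{d/n}\,\|\BDelta\|_{\op}$, I get
\[
\bigl\|\BDelta_i^{\top}\BS\bigr\|_{\op} \leq \|\BDelta_i^{\top}\BZ\|_{\op} + \|\BDelta_i\|_{\op}\cdot\delta\sqrt{d/n}\,\|\BDelta\|_{\op},
\]
and taking the maximum over $i$ finishes this part.

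For the first bound I would start by simplifying $\BLambda_{ii}$ via Lemma~\ref{lem:firstcond}: since $\BS_i\BS_i^{\top}=\I_d$, the first-order condition gives $\BLambda_{ii}=\BLambda_{ii}\BS_i\BS_i^{\top} = (\BM+\BE_i)\BS_i^{\top}$ with $\BE_i:=\sum_{j\neq i}\BDelta_{ij}\BS_j$. Symmetry of $\BLambda_{ii}$ at the critical point then lets us write
\[
\BLambda_{ii} = \tfrac12\bigl(\BM\BS_i^{\top}+\BS_i\BM^{\top}\bigr) + \tfrac12\bigl(\BE_i\BS_i^{\top}+\BS_i\BE_i^{\top}\bigr),
\]
where the noise piece has operator norm at most $\|\BE_i\|_{\op}$. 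For the signal piece I would invoke the polar decomposition $\BM=\BB\BQ$ with $\BB=(\BM\BM^{\top})^{1/2}$ symmetric PSD and $\BQ\in\St(d,p)$ the minimizer in $d_F(\BS,\BZ)$; since $\BM\BQ^{\top}=\BB$ one has
\[
\tfrac12(\BM\BS_i^{\top}+\BS_i\BM^{\top}) = \BB + \tfrac12\bigl(\beps_i\BM^{\top}+\BM\beps_i^{\top}\bigr),\qquad \beps_i:=\BS_i-\BQ.
\]
Lemma~\ref{lem:sigmamin} gives $\lambda_{\min}(\BB)\geq n-\delta^2 d\|\BDelta\|_{\op}^2/(2n)$, so Weyl's inequality applied to this decomposition (combined with Lemma~\ref{lem:Lambdapos} to identify $\lambda_{\min}(\BLambda_{ii})$ with $\sigma_{\min}(\BLambda_{ii})$) would yield the claimed bound, provided the residual $\|\beps_i\BM^{\top}+\BM\beps_i^{\top}\|_{\op}$ is negligible. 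The key second-order identity forced by $\BS_i\BS_i^{\top}=\BQ\BQ^{\top}=\I_d$ is
\[
\beps_i\BQ^{\top}+\BQ\beps_i^{\top} = -\beps_i\beps_i^{\top},
\]
which, substituted together with $\BM=\BB\BQ$, recasts the residual as a commutator with $\BB-n\I_d$ plus a term quadratic in $\beps_i$; since $\|\BB-n\I_d\|_{\op}\leq\delta^2 d\|\BDelta\|_{\op}^2/(2n)$ is itself already small, the commutator part is of higher order.

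The hard part is the quadratic remainder $\|\beps_i\beps_i^{\top}\BB\|_{\op}\leq n\|\beps_i\|_{\op}^2$. Invoking only the proximity condition, the naive worst-case bound $\|\beps_i\|_{\op}^2\leq\|\beps_i\|_F^2\leq d_F^2(\BS,\BZ)=\delta^2 d\|\BDelta\|_{\op}^2/n$ produces $n\|\beps_i\|_{\op}^2\leq\delta^2 d\|\BDelta\|_{\op}^2$, which is a factor of $n$ weaker than what the lemma requires. Closing this gap appears to need the full second-order information of Lemma~\ref{lem:Hess}, not just the proximity inequality: one must extract from $\dot\BS:\nabla^2 f(\BS):\dot\BS\leq 0$ a roughly uniform distribution of the individual deviations $\|\beps_j\|_F^2$ across $j$, so that any single block satisfies $\|\beps_i\|_F^2\lesssim d_F^2(\BS,\BZ)/n$ rather than the trivial $\|\beps_i\|_F^2\leq d_F^2(\BS,\BZ)$. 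I would attempt to establish such equidistribution by testing the Riemannian Hessian against non-local tangent perturbations of the form $\dot\BS_j=\bu\bw_j^{\top}$, with each $\bw_j$ orthogonal to the row space of $\BS_j$, which remain in the product tangent space while mixing different blocks and produce block-wise comparisons strong enough to pin down the per-block Frobenius mass.
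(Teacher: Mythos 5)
Your treatment of the second bound is correct and matches the paper. The first bound, however, has a genuine gap that you yourself flag, and it stems from choosing an unnecessarily roundabout route. You decompose $\BLambda_{ii}$ \emph{additively} into a signal piece and a noise piece and then try to control the pieces separately via the polar factor of $\BM=\BZ^{\top}\BS$; this produces the residual $\beps_i\BM^{\top}+\BM\beps_i^{\top}$ whose quadratic-in-$\beps_i$ part you cannot bound per block without some equidistribution of $\|\beps_j\|_F^2$ over $j$ --- and no such equidistribution is actually available or needed.

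The step you are missing is multiplicative, not additive. You correctly record $\BLambda_{ii}=(\BM+\BE_i)\BS_i^{\top}$, but the identity to exploit is the first-order condition read the other way: $\BM+\BE_i=\sum_j\BA_{ij}\BS_j=\BLambda_{ii}\BS_i$. Since $\BS_i\BS_i^{\top}=\I_d$, this gives
\[
\Bigl(\sum_{j}\BA_{ij}\BS_j\Bigr)\Bigl(\sum_j\BA_{ij}\BS_j\Bigr)^{\top}=\BLambda_{ii}\BS_i\BS_i^{\top}\BLambda_{ii}^{\top}=\BLambda_{ii}\BLambda_{ii}^{\top},
\]
so the singular values of the $d\times p$ matrix $\sum_j\BA_{ij}\BS_j=\BM+\BE_i$ coincide exactly with those of the $d\times d$ matrix $\BLambda_{ii}$. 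By Lemma~\ref{lem:Lambdapos} (second-order criticality), $\BLambda_{ii}\succeq 0$ is symmetric, so $\lambda_{\min}(\BLambda_{ii})=\sigma_{\min}(\BLambda_{ii})=\sigma_{\min}(\BM+\BE_i)$. Then a single application of Weyl's inequality for singular values gives $\sigma_{\min}(\BM+\BE_i)\geq\sigma_{\min}(\BM)-\|\BE_i\|_{\op}$, and Lemma~\ref{lem:sigmamin} supplies $\sigma_{\min}(\BM)\geq n-\delta^2 d\|\BDelta\|_{\op}^2/(2n)$. That yields the first bound directly --- no polar decomposition, no commutator estimate, no quadratic remainder, no per-block equidistribution. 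The proposal as written does not close; the correct argument is both shorter and cleaner than what you attempted.
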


\begin{proof}

Suppose $\BS$ is a SOCP with $d_F(\BS,\BZ)\leq \delta\sqrt{n^{-1}d}\|\BDelta\|_{\op}$. We have
\[
\sigma_{\min}(\BZ^{\top}\BS) \geq n - \frac{\delta^2 d\|\BDelta\|_{\op}^2}{2n}, \quad \BLambda_{ii}  =  \frac{1}{2}\sum_{j=1}^n (\BS_i\BS_j^{\top} \BA_{ji} + \BA_{ij}\BS_j\BS_i^{\top}) \succeq 0
\]
from Lemma~\ref{lem:sigmamin} and~\ref{lem:Lambdapos}. 
The first order necessary condition~\eqref{eq:grad} implies 
\[
\sum_{j=1}^n\BA_{ij}\BS_j= \BLambda_{ii}\BS_i, \qquad \BLambda_{ii} = \sum_{j=1}^n\BS_i\BS_j^{\top}\BA_{ji} = \sum_{j=1}^n\BA_{ij}\BS_j\BS_i^{\top}
\]
where $\BS_i\BS_i^{\top} = \I_d$. Therefore, the singular values of $\sum_{j=1}^n\BA_{ij}\BS_j$ and $\BLambda_{ii}$ are the same.
Moreover, due to the symmetry and $\BLambda_{ii}\succeq 0$, its eigenvalues and singular values match:
\begin{align*}
\lambda_{\min}(\BLambda_{ii}) & = \sigma_{\min}(\BLambda_{ii}) =  \sigma_{\min}\left( \sum_{j=1}^n \BA_{ij}\BS_j
\right) \\
& =  \sigma_{\min}\left(\sum_{j=1}^n \BS_j + \sum_{j=1}^n \BDelta_{ij}\BS_j \right) \\
& \geq \sigma_{\min}\left(\BZ^{\top}\BS \right) -  \left\|  \sum_{j\neq i} \BDelta_{ij}\BS_j\right\|_{\op} \\
& \geq n - \frac{\delta^2d\|\BDelta\|^2_{\op}}{2n} - \max_{1\leq i\leq n} \left\|  \sum_{j\neq i} \BDelta_{ij}\BS_j\right\|_{\op}
\end{align*}
where the lower bound is independent of $i$.
The key is to bound $\left\|  \sum_{j\neq i} \BDelta_{ij}\BS_j \right\|_{\op}$ which is suboptimal in this analysis. 
\begin{align*}
 \left\|\sum_{j\neq i}  \BDelta_{ij}\BS_j  \right\|_{\op} 
 & \leq  \left\|\sum_{j\neq i}  \BDelta_{ij}(\BS_j - \BQ) \right\|_{\op} +  \left\| \sum_{j\neq i}   \BDelta_{ij}\BQ\right\|_{\op} \\
 & \leq \|  \BDelta_{i}^{\top}(\BS - \BZ\BQ)  \|_{\op} + \| \BDelta_{i}^{\top}\BZ \|_{\op}
\end{align*}
where $\BDelta_i^{\top}\in\RR^{d\times nd}$ is the $i$th row block  of $\BDelta.$
The operator norm of $\| \BDelta_{i} (\BS - \BZ\BQ) \|_{\op} $ is bounded by
\[
 \|  \BDelta_{i}^{\top}(\BS - \BZ\BQ ) \|_{\op} 
\leq  \| \BDelta_i\|_{\op} \|\BS -\BZ\BQ\|_{\op}
\leq  \| \BDelta\|_{\op} \|\BS - \BZ\BQ \|_F.
\]
Thus we have
\begin{align*}
 \left\|\sum_{j\neq i}  \BDelta_{ij} \BS_j \right\|_{\op}  & \leq \| \BDelta_i\|_{\op} \|\BS - \BZ\BQ \|_F +  \left\|\BDelta_{i}^{\top} \BZ\right\|_{\op}  \leq \delta\sqrt{\frac{d}{n}} \|\BDelta\|_{\op}\|\BDelta_i\|_{\op}  + \left\|\BDelta_{i}^{\top} \BZ\right\|_{\op}.
 \end{align*}
Taking the maximum over $1\leq i\leq n$ gives the desired result.
\end{proof}

With this supporting lemma, we are ready to prove Proposition~\ref{prop:keys}.

\begin{proof}[\bf Proof of Proposition~\ref{prop:keys}]
The proof consists of two steps: first to show that $\BS$ is a global maximizer to~\eqref{def:BM} by showing that $\BLambda - \BA\succeq 0$; then prove that $\BS$ is exactly rank-$d$.

{\bf Step One: show that $\BS$ is a global maximizer}

Remember that $\BC\BS = 0$ if $\BS\in\RR^{nd\times p}$ is a critical point of $f.$ Thus, to show $\BC$ is positive semidefinite at critical point $\BS$, it suffices to test 
$\bu^{\top}\BC\bu \geq 0$
for all $\bu\in\RR^{nd\times 1}$ which is perpendicular to each column of $\BS$:
\[
\BS^{\top} \bu = 0 \in\RR^{p\times 1} \Longleftrightarrow \sum_{j=1}^n \BS_j^{\top} \bu_j = 0 \in\RR^{p}
\]
where $\bu_j\in\RR^d$ is the $j$th block of $\bu$, $1\leq j\leq n.$

Then it holds that
\begin{align*}
\bu^{\top}\BC \bu & = \bu^{\top}\BLambda\bu - \bu^{\top}\BA \bu \\
& \geq \lambda_{\min}(\BLambda) \|\bu\|^2 - \bu^{\top} (\BZ\BZ^{\top} + \BDelta )\bu \\
& \geq \lambda_{\min}(\BLambda) \|\bu\|^2 - \bu^{\top} \BZ\BZ^{\top} \bu -  \|\BDelta\|_{\op}\|\bu\|^2
\end{align*}
Note that $\lambda_{\min}(\BLambda) = \min_{1\leq i\leq n}\lambda_{\min}(\BLambda_{ii})$ which is given by Lemma~\ref{lem:key}.
For $\bu^{\top}\BZ \BZ^{\top} \bu$, we use $\sum_{j=1}^n \BS_j^{\top} \bu_j = 0$ and 
\begin{align*}
\bu^{\top}\BZ \BZ^{\top} \bu & = \left\| \sum_{j=1}^n \bu_j\right\|^2 =  \left\| \sum_{j=1}^n \BQ^{\top} \bu_j\right\|^2 \\
& = \left\|\sum_{j=1}^n (\BS_j - \BQ)^{\top} \bu_j \right\|^2 \\
& \leq  \|( \BS -\BZ \BQ)^{\top}\bu \|^2 \\
 & \leq \|\BS - \BZ\BQ  \|_{\op}^2 \|\bu\|^2 \\
 & \leq \frac{\delta^2d\|\BDelta\|_{\op}^2}{n}\|\bu\|^2
\end{align*}
where the last inequality uses the proximity condition. 
For $\lambda_{\min}(\BLambda)$, we apply Lemma~\ref{lem:key} and immediately arrive at:
\begin{align*}
\lambda_{\min}(\BC) & \geq n - \left(  \frac{3\delta^2d\|\BDelta\|^2_{\op}}{2n} + \delta\sqrt{\frac{d}{n}} \|\BDelta\|_{\op}\max_{1\leq i\leq n}\|\BDelta_i\|_{\op}  + \max_{1\leq i\leq n}\left\|\BDelta_{i}^{\top} \BZ\right\|_{\op} + \|\BDelta\|_{\op}
\right) \geq 0.
\end{align*}

\paragraph{Step Two: $\BS$ is exactly rank-$d$}

We have shown the solution to the Burer-Monteiro approach is equivalent to that of the SDP. Now, we will prove that the solution to the Burer-Monteiro approach is exactly rank $d$.

How to show that $\BC$ is rank-$d$ deficient? It suffices to bound the dimension of its null space.
In a more compact version, we have
\[
\BC = \BLambda - \BA = \BLambda - \BZ\BZ^{\top}  - \BDelta
\]
The null space is bounded by 
\begin{align*}
\Null(\BC) & = nd- \rank(\BLambda - \BZ \BZ^{\top}  - \BDelta) \\
& \leq nd + d - \rank(\BLambda - \BDelta)
\end{align*}
where
\[
\rank(\BLambda - \BZ \BZ^{\top}- \BDelta) + \rank(\BZ \BZ^{\top}) = \rank(\BLambda - \BZ \BZ^{\top} - \BDelta) + d  \geq \rank(\BLambda - \BDelta).
\]
It suffices to provide a lower bound of $\rank(\BLambda - \BDelta)$. In particular, we aim to show that $\BLambda - \BDelta$ is full-rank by 
\[
\BLambda - \BDelta\succ 0.
\]
This is guaranteed by
\[
\lambda_{\min}(\BLambda) > \|\BDelta\|_{\op}
\]
and more explicitly 
\begin{align*}
& \lambda_{\min}(\BLambda) - \|\BDelta\|_{\op} \\
& \quad \geq n - \left(\frac{\delta^2d\|\BDelta\|^2_{\op}}{2n} + \delta\sqrt{\frac{d}{n}} \|\BDelta\|_{\op}\max_{1\leq i\leq n}\|\BDelta_i\|_{\op}  + \max_{1\leq i\leq n}\left\|\BDelta_{i}^{\top} \BZ\right\|_{\op} + \|\BDelta\|_{\op} \right)  > 0.
\end{align*}
Then we have $\Null(\BC) \leq nd +d - \rank(\BLambda - \BDelta) = nd + d - nd = d.$
If $\BC$ is of rank $nd - d$, then $\rank(\BS)=d$. Thus the global optimum is the same as that of~\eqref{def:CVX} and~\eqref{def:OD}.
\end{proof}

\subsection{Proof of Proposition~\ref{prop:SZ}}\label{s:propSZ}

\begin{proof}[\bf Proof of Proposition~\ref{prop:SZ}]
It is unclear how to characterize the global maximizer to the objective function~\eqref{def:BM}. However, the global maximizer must be a 2nd critical point whose corresponding objective function value is greater than $f(\BS)$ evaluated at the fully synchronous state $\BS_i = \BS_j.$

Throughout our discussion, we let $\BQ$ be the minimizer to $\min_{\BQ\in \St(d,p)} \|  \BS - \BZ\BQ\|_F$.
Given $\BS$ which satisfies $f(\BS)\geq f(\BZ\BQ)$, we have
\[
f(\BS) \geq f(\BZ\BQ)  \Longleftrightarrow \lag \BZ\BZ^{\top} + \BDelta,\BS\BS^{\top}\rag \geq \lag \BZ\BZ^{\top} + \BDelta,\BZ\BZ^{\top}\rag.
\]
Note that $\lag \BZ\BZ^{\top},  \BZ\BZ^{\top}\rag = n^2 d$ and $\lag \BZ\BZ^{\top}, \BS\BS^{\top}\rag=\|\BZ^{\top}\BS\|_F^2.$ 
This gives
\begin{align}
n^2 d - \|\BZ^{\top}\BS\|_F^2
& \leq \lag \BDelta, \BS\BS^{\top}  -  \BZ\BZ^{\top} \rag \nonumber  \\
& =  \lag  \BDelta, ( \BS - \BZ\BQ)(\BS + \BZ\BQ)^{\top}\rag \nonumber  \\
& \leq  \| \BDelta(\BS + \BZ\BQ)\|_F \cdot d_F(\BS,\BZ) \label{eq:SZ_upper}
\end{align}
where $d_F(\BS,\BZ) = \|\BS - \BZ\BQ \|_F$ and
\[
( \BS - \BZ\BQ)(\BS + \BZ\BQ)^{\top} = \BS\BS^{\top} - (\BZ\BQ)^{\top}\BS + \BS(\BZ\BQ)^{\top} - \BZ\BZ^{\top}
\]

In fact, $n^2d - \|\BZ^{\top}\BS\|_F^2$ is well controlled by $d_F(\BS,\BZ):$
\begin{equation}\label{eq:SZ}
n^2 d -  \|\BZ^{\top}\BS\|_F^2 = \sum_{i=1}^d (n^2 - \sigma_i^2(\BZ^{\top}\BS) )  = \sum_{i=1}^d (n + \sigma_i(\BZ^{\top}\BS) )(n - \sigma_i(\BZ^{\top}\BS) )
\end{equation}
where  $\sigma_i(\BZ^{\top}\BS)$ is the $i$th largest singular value of $\BZ^{\top}\BS.$
On the other hand, it holds that
\[
\frac{1}{2}\min_{\BQ\in \St(d,p)}\|\BS - \BZ\BQ \|_F^2 = nd - \|\BZ^{\top}\BS\|_* = \sum_{i=1}^d (n - \sigma_i(\BZ^{\top}\BS))
\]
Remember that $0\leq \sigma_i(\BZ^{\top}\BS)\leq n$ due to the orthogonality of each $\BS_i$. Therefore, we have
\begin{equation}\label{eq:df2}
\frac{nd_F^2(\BS,\BZ)}{2} \leq  n (nd -  \|\BZ^{\top}\BS\|_*) \leq n^2 d -  \left\| \BZ^{\top} \BS \right\|^2_F  \leq 2n (nd - \|\BZ^{\top}\BS\|_*) \leq n d_F^2(\BS,\BZ)
\end{equation}
which follows from~\eqref{eq:SZ}.
Substitute it back into~\eqref{eq:SZ_upper}, and we get
\[
\frac{nd^2(\BS,\BZ)}{2} \leq n^2 d -  \left\| \BZ^{\top} \BS \right\|^2_F  \leq  \|\BDelta(\BS + \BZ\BQ)\|_F \cdot d(\BS,\BZ) 
\]
Immediately, we have the following estimate of $d(\BS,\BZ):$
\begin{align*}
d_F(\BS,\BZ) & \leq \frac{2}{n} \|\BDelta(\BS +\BZ \BQ) \|_F \\
& \leq \frac{2}{n} \cdot \|\BDelta\|_{\op} \|\BS + \BZ \BQ\|_F \\
& \leq \frac{2}{n} \cdot \|\BDelta\|_{\op} \cdot 2\sqrt{nd} \\
& \leq 4\sqrt{\frac{d}{n}} \|\BDelta\|_{\op} 
\end{align*}
where $\|\BS +\BZ \BQ\|_F\leq 2\sqrt{nd}$ follows from $\|\BS\|_F = \|\BZ\|_F = \sqrt{nd}.$
\end{proof}

\subsection{Proof of Proposition~\ref{prop:SZBM}}\label{s:propSZBM}
This section is devoted to proving all the SOCPs are highly aligned with the fully synchronized state. The proof follows from two steps: (a) using the second order necessary condition to show that all SOCPs have a large objective function value; (b) combining (a) with the first order necessary condition leads to Proposition~\ref{prop:SZBM}.

\begin{lemma}\label{lem:BM2nd}
All the second order critical points $\BS\in\St(d,p)^{\otimes n}$ must satisfy:
\begin{align*}
(p-d)\|\BZ^{\top}\BS\|_F^2  & \geq (p -2d)n^2 d + \|\BS\BS^{\top}\|_F^2d \\
&  + \sum_{i=1}^n\sum_{j=1}^n (\|\BS_i\BS_j^{\top}\|_F^2 - d)\Tr(\BDelta_{ij}) + (p-d)\lag \BDelta, \BZ\BZ^{\top} - \BS\BS^{\top}\rag.
\end{align*}
\end{lemma}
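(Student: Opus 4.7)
The plan is to derive the inequality by applying the second-order necessary condition from Lemma~\ref{lem:Hess} to a carefully chosen family of tangent vectors. Specifically, I would use the ansatz
\[
\dot{\BS}_i \;=\; \BV\,(\I_p - \BS_i^{\top}\BS_i), \qquad i = 1,\ldots,n,
\]
with a common matrix $\BV \in \RR^{d\times p}$ at every site $i$. A direct calculation gives $\BS_i\dot{\BS}_i^{\top} = \BS_i(\I_p - \BS_i^{\top}\BS_i)\BV^{\top} = 0$ (using $\BS_i\BS_i^{\top} = \I_d$), so $\dot{\BS}_i\in T_{\BS_i}(\St(d,p))$ and the necessary condition \eqref{cond:2ndpsd} applies to every such choice.

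The central trick is to sum \eqref{cond:2ndpsd} over the basis $\BV = \be_k\be_\ell^{\top}$ with $k\in[d]$, $\ell\in[p]$. Because $(\I_p - \BS_i^{\top}\BS_i)$ and $(\I_p - \BS_j^{\top}\BS_j)$ are orthogonal projections, this basis-sum collapses into a trace,
\[
\sum_{k,\ell}\dot{\BS}_i\dot{\BS}_j^{\top} \;=\; \Tr\!\bigl((\I_p - \BS_i^{\top}\BS_i)(\I_p - \BS_j^{\top}\BS_j)\bigr)\I_d \;=\; \bigl(p - 2d + \|\BS_i\BS_j^{\top}\|_F^2\bigr)\I_d,
\]
where the last step uses $\Tr(\BS_i^{\top}\BS_i) = d$ and $\Tr(\BS_i^{\top}\BS_i\BS_j^{\top}\BS_j) = \|\BS_i\BS_j^{\top}\|_F^2$; in particular the diagonal case $i=j$ yields $(p-d)\I_d$.

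Plugging this identity into \eqref{cond:2ndpsd} (summed over the basis) gives
\[
(p-d)\sum_{i=1}^n\Tr(\BLambda_{ii}) \;\geq\; \sum_{i=1}^n\sum_{j=1}^n\bigl(p-2d+\|\BS_i\BS_j^{\top}\|_F^2\bigr)\Tr(\BA_{ij}).
\]
The first-order condition from Lemma~\ref{lem:firstcond} then gives $\Tr(\BLambda_{ii}) = \Tr(\BLambda_{ii}\BS_i\BS_i^{\top}) = \sum_j \lag\BA_{ij},\BS_i\BS_j^{\top}\rag$, so that $\sum_i\Tr(\BLambda_{ii}) = \lag\BA,\BS\BS^{\top}\rag$. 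Expanding $\BA = \BZ\BZ^{\top}+\BDelta$ and using the identities $\Tr(\BA_{ij}) = d+\Tr(\BDelta_{ij})$, $\sum_{i,j}\|\BS_i\BS_j^{\top}\|_F^2 = \|\BS\BS^{\top}\|_F^2$, and $\lag\BDelta,\BZ\BZ^{\top}\rag = \sum_{i,j}\Tr(\BDelta_{ij})$, a direct rearrangement yields exactly the advertised lower bound on $(p-d)\|\BZ^{\top}\BS\|_F^2$.

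The main obstacle is identifying the correct tangent-vector ansatz. The projection $(\I_p - \BS_i^{\top}\BS_i)$ is precisely what makes $\dot{\BS}_i$ automatically tangent, and, after basis-summation, makes $\dot{\BS}_i\dot{\BS}_j^{\top}$ proportional to $\I_d$; this is what collapses the Hessian form into traces and pairs cleanly with the first-order identity for $\Tr(\BLambda)$. The coefficient $p-2d$ in the resulting bound originates precisely from $\Tr((\I_p - \BS_i^{\top}\BS_i)(\I_p - \BS_j^{\top}\BS_j)) = p-2d+\|\BS_i\BS_j^{\top}\|_F^2$, which explains the role of the hypothesis $p\geq 2d+1$ in Theorem~\ref{thm:bm}: it keeps this coefficient strictly positive so that the proximity condition extracted downstream from this lemma remains non-vacuous.
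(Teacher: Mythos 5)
Your proof is correct and takes essentially the same approach as the paper: the same tangent-vector ansatz $\dot{\BS}_i = \BV(\I_p - \BS_i^{\top}\BS_i)$, the same collapse to $\Tr\bigl((\I_p-\BS_i^{\top}\BS_i)(\I_p-\BS_j^{\top}\BS_j)\bigr)\I_d$, and the same downstream manipulations using $\sum_i\Tr(\BLambda_{ii})=\lag\BA,\BS\BS^{\top}\rag$. The only cosmetic difference is that you average the second-order inequality by summing over the finite basis $\BV=\be_k\be_\ell^{\top}$, whereas the paper chooses a Gaussian random $\BPhi$ and takes expectation; both are valid positive combinations of the inequality and produce the identical quantity $\Tr(\BP_i\BP_j)\I_d$.
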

Suppose the noise is zero, then 
$(p-d)\|\BZ^{\top}\BS\|_F^2 \geq (p -2d)n^2 d + \|\BS\BS^{\top}\|_F^2d$ holds. It means that $\|\BZ^{\top}\BS\|_F^2$ is quite close to $n^2 d$, i.e., $\{\BS_i\}_{i=1}^n$ are highly aligned, if $p$ is reasonably large. The proof idea of this lemma can also be found in~\cite{MTG20,MMMO17}. 

\begin{proof}

Let's first consider the second-order necessary condition~\eqref{cond:2ndpsd}:
\[
\sum_{i=1}^n \lag \BLambda_{ii}, \dot{\BS}_i\dot{\BS}_i^{\top}\rag \geq \sum_{i=1}^n\sum_{j=1}^n\lag \BA_{ij}, \dot{\BS}_i\dot{\BS}_j^{\top}\rag
\]
for all $\dot{\BS}_i$ on the tangent space of $\St(p,d)$ at $\BS_i$ where
$\BLambda_{ii} = \frac{1}{2} \sum_{j=1}^n (\BS_i\BS_j^{\top}\BA_{ji} + \BA_{ij}\BS_j\BS_i^{\top} ).$
Now we pick $\dot{\BS}_i$ as
\[
\dot{\BS}_i = \BPhi(\I_p - \BS_i^{\top} \BS_i) \in\RR^{d\times p}
\]
where $\BPhi\in\RR^{d\times p}$ is a Gaussian random matrix, i.e., each entry in $\BPhi$ is an i.i.d. $\mathcal{N}(0,1)$ random variable. It is easy to verify that $\dot{\BS}_i$ is indeed on the tangent space since $\BS_i\dot{\BS}_i^{\top} = 0.$
By taking the expectation w.r.t. $\BPhi$, the inequality still holds:
\[
\sum_{i=1}^n \lag \BLambda_{ii}, \E \dot{\BS}_i\dot{\BS}_i^{\top}\rag \geq \sum_{i=1}^n\sum_{j=1}^n\lag \BA_{ij}, \E\dot{\BS}_i\dot{\BS}_j^{\top}\rag.
\]
It suffices to compute $\E\dot{\BS}_i\dot{\BS}_j^{\top} $ now.
\begin{align*}
\E (\dot{\BS}_i\dot{\BS}_j^{\top} ) & = \E \BPhi(\I_p - \BS_i^{\top}\BS_i)(\I_p - \BS_j^{\top}\BS_j)\BPhi^{\top} \\
& = \lag\I_p - \BS_i^{\top}\BS_i, \I_p - \BS_j^{\top}\BS_j \rag\I_d \\
& = (p - 2d + \|\BS_i\BS_j^{\top}\|_F^2)\I_d \\
& = 
\begin{cases}
(p- d)\I_d, & i = j, \\
(p-2d + \|\BS_i\BS_j^{\top}\|_F^2)\I_d, & i\neq j
\end{cases}
\end{align*}
where $d = \Tr(\BS_i\BS_i^{\top})$.
Therefore, we have
\begin{equation}\label{eq:2nd}
(p-d)\sum_{i=1}^n \Tr( \BLambda_{ii}) \geq \sum_{i=1}^n\sum_{j=1}^n(p-2d+\|\BS_i\BS_j^{\top}\|_F^2)\Tr(\BA_{ij}).
\end{equation}
The right hand side of~\eqref{eq:2nd} equals
\begin{align*}
& \sum_{i=1}^n\sum_{j=1}^n (p-2d + \|\BS_i\BS_j^{\top}\|_F^2)\Tr(\BA_{ij}) \\
& = \sum_{i=1}^n\sum_{j=1}^n (p-2d + \|\BS_i\BS_j^{\top}\|_F^2)(d + \Tr(\BDelta_{ij})) \\
& = (p -2d)n^2 d + \|\BS\BS^{\top}\|_F^2d + (p-2d)\lag \BDelta, \BZ\BZ^{\top}\rag + \sum_{i=1}^n\sum_{j=1}^n \|\BS_i\BS_j^{\top}\|_F^2 \Tr(\BDelta_{ij})
\end{align*}
where $\BA_{ij} = \I_d + \BDelta_{ij}.$
From the definition of $\BLambda_{ii}$, the left side of~\eqref{eq:2nd} equal to
\begin{align*}
\sum_{i=1}^n \Tr(\BLambda_{ii} ) & = \sum_{i=1}^n\sum_{j=1}^n \lag \BA_{ij}, \BS_i\BS_j^{\top}\rag\\
& = \lag \BA, \BS\BS^{\top}\rag \\
& = \lag \BZ\BZ^{\top} + \BDelta, \BS\BS^{\top}\rag\\
& = \|\BZ^{\top}\BS\|_F^2 + \lag \BDelta, \BS\BS^{\top}\rag.
\end{align*}
Plugging the estimation back to~\eqref{eq:2nd} results in
\begin{align*}
(p-d)\left( \|\BZ^{\top}\BS\|_F^2 + \lag \BDelta, \BS\BS^{\top}\rag \right) & \geq (p -2d)n^2 d + \|\BS\BS^{\top}\|_F^2d \\
& \quad + (p-2d)\lag \BDelta, \BZ\BZ^{\top}\rag + \sum_{i=1}^n\sum_{j=1}^n \|\BS_i\BS_j^{\top}\|_F^2 \Tr(\BDelta_{ij}).
\end{align*}
By separating the signal from the noise, we have
\begin{align*}
(p-d)\|\BZ^{\top}\BS\|_F^2  & \geq (p -2d)n^2 d + \|\BS\BS^{\top}\|_F^2d \\
& \quad + \sum_{i=1}^n\sum_{j=1}^n (\|\BS_i\BS_j^{\top}\|_F^2 - d)\Tr(\BDelta_{ij}) + (p-d)\lag \BDelta, \BZ\BZ^{\top}-  \BS\BS^{\top} \rag
\end{align*}
where $\lag \BDelta, \BZ\BZ^{\top}\rag = \sum_{i=1}^n\sum_{j=1}^n \Tr(\BDelta_{ij}).$
\end{proof}

\begin{lemma}\label{lem:BM1st}
Any first-order critical point satisfies:
\[
 \|\BS\BS^{\top}\|_F^2\geq \left\|\BZ^{\top}\BS \right\|_F^2  - \frac{1}{n}\|\BDelta\BS\|_F^2. 
\]
\end{lemma}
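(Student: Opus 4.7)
The plan is to use the first-order condition to constrain $\BZ^\top\BS$ modulo the row spans of the blocks $\BS_i$, and then to invoke the elementary PSD inequality $\BM^\top\BM\preceq n\BS^\top\BS$, where I write $\BM:=\BZ^\top\BS=\sum_i\BS_i\in\RR^{d\times p}$ and $(\BDelta\BS)_i:=\sum_j\BDelta_{ij}\BS_j\in\RR^{d\times p}$. I introduce $\BP_i:=\BS_i^\top\BS_i\in\RR^{p\times p}$; because $\BS_i\BS_i^\top=\I_d$ one checks that $\BP_i^2=\BS_i^\top(\BS_i\BS_i^\top)\BS_i=\BP_i$, so $\BP_i$ is the rank-$d$ orthogonal projection onto the row span of $\BS_i$.

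The only use of the FOCP is the following. The rows of $\BLambda_{ii}\BS_i$ manifestly lie in the row span of $\BS_i$, and Lemma~\ref{lem:firstcond} gives $\BLambda_{ii}\BS_i=\sum_j\BA_{ij}\BS_j=\BM+(\BDelta\BS)_i$; consequently
\[
(\BM+(\BDelta\BS)_i)(\I_p-\BP_i)=0,\qquad\text{i.e.}\qquad \BM(\I_p-\BP_i)=-(\BDelta\BS)_i(\I_p-\BP_i).
\]
Taking Frobenius norms of this identity and using the Pythagorean decomposition $\|\BX\|_F^2=\|\BX\BP_i\|_F^2+\|\BX(\I_p-\BP_i)\|_F^2$ yields, for each $i$, the scalar identity $\|\BM\|_F^2-\|\BM\BP_i\|_F^2=\|(\BDelta\BS)_i\|_F^2-\|(\BDelta\BS)_i\BP_i\|_F^2$. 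Summing over $i$, rearranging, and discarding the nonnegative term $\sum_i\|(\BDelta\BS)_i\BP_i\|_F^2$ produces
\[
\sum_{i=1}^n\|\BM\BP_i\|_F^2\;\geq\; n\|\BM\|_F^2-\|\BDelta\BS\|_F^2.
\]

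It remains to bound the left-hand side from above by $n\|\BS\BS^\top\|_F^2$. I repackage the sum as a single inner product: $\sum_i\|\BM\BP_i\|_F^2=\sum_i\Tr(\BP_i\BM^\top\BM\BP_i)=\lag\BM^\top\BM,\sum_i\BP_i\rag=\lag\BM^\top\BM,\BS^\top\BS\rag$. The ``centering'' identity
\[
n\BS^\top\BS-\BM^\top\BM=\tfrac{1}{2}\sum_{i,j=1}^{n}(\BS_i-\BS_j)^\top(\BS_i-\BS_j)\;\succeq\; 0
\]
then gives $\BM^\top\BM\preceq n\BS^\top\BS$, so by monotonicity of $\lag\cdot,\BS^\top\BS\rag$ on PSD matrices, $\lag\BM^\top\BM,\BS^\top\BS\rag\leq n\lag\BS^\top\BS,\BS^\top\BS\rag=n\|\BS^\top\BS\|_F^2=n\|\BS\BS^\top\|_F^2$. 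Chaining the two bounds and dividing by $n$ produces the claim.

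The only non-routine step is spotting the projection structure: once one notices that the FOCP forces $\BM+(\BDelta\BS)_i$ to lie in the row span of $\BS_i$, the natural object is $\BP_i=\BS_i^\top\BS_i$ and everything else reduces to Pythagoras and the classical ``variance $\succeq 0$'' identity, so I anticipate no serious analytic obstacle.
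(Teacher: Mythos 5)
Your proof is correct and follows essentially the same route as the paper: both hinge on the observation that the FOCP forces $\sum_j\BA_{ij}\BS_j$ to lie in the row span of $\BS_i$ (hence is annihilated by $\I_p-\BS_i^\top\BS_i$), both then take Frobenius norms and sum over $i$, and both conclude via the equivalent PSD inequalities $\BM^\top\BM\preceq n\BS^\top\BS$ (your variance identity) and $\BZ\BZ^\top\preceq n\I_{nd}$ (the paper's). The only differences are presentational: you invoke the projection structure and Pythagoras explicitly where the paper expands norms and inner products directly.
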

\begin{proof}
Note that the first-order necessary condition is
\[
\sum_{j=1}^n \BA_{ij}\BS_j - \frac{1}{2}\sum_{j=1}^n (\BS_i\BS_j^{\top}\BA_{ji} + \BA_{ij}\BS_j\BS_i^{\top})\BS_i = 0
\]
which implies $\sum_{j=1} \BS_i\BS_j^{\top}\BA_{ji}   = \sum_{j=1}\BA_{ij}\BS_j\BS_i^{\top}$ by applying $\BS_i^{\top}$ to the equation above.
Then it reduces to
\[
\sum_{j=1}^n \BA_{ij}\BS_j \left( \I_p - \BS_i^{\top} \BS_i\right) = 0 \Longleftrightarrow \left(\BZ^{\top}\BS + \sum_{j=1}^n \BDelta_{ij}\BS_j\right) \left( \I_p - \BS_i^{\top}\BS_i\right) = 0.
\]
By separating the signal from the noise, we have
\[
\BZ^{\top}\BS (\I_p - \BS_i^{\top} \BS_i) = -\sum_{j=1}^n \BDelta_{ij}\BS_j (\I_p - \BS_i^{\top} \BS_i)
\]
Taking the Frobenius norm leads to
\[
\|\BZ^{\top}\BS\|_F^2 - \lag \BZ^{\top}\BS \BS_i^{\top} \BS_i, \BZ^{\top}\BS\rag \leq \left\| \sum_{j=1}^n \BDelta_{ij}\BS_j \right\|_F^2
\]
where $\|\I_p - \BS_i^{\top} \BS_i\|_{\op}=1.$
Taking the sum over $1\leq i\leq n$ gives
\begin{align*}
\| \BDelta\BS\|_F^2 & =  \sum_{i=1}^n\left\| \sum_{j=1}^n \BDelta_{ij}\BS_j \right\|_F^2 \\
& \geq n \left\|\BZ^{\top}\BS \right\|_F^2 - \sum_{i=1}^n \lag \BZ^{\top}\BS \BS_i^{\top} \BS_i, \BZ^{\top}\BS \rag \\
& = n \left\|\BZ^{\top}\BS \right\|_F^2 - \lag \BZ^{\top}\BS \BS^{\top} \BS, \BZ^{\top}\BS \rag \\
& = n \left\|\BZ^{\top}\BS \right\|_F^2 - \lag \BS\BS^{\top}\BS\BS^{\top}, \BZ\BZ^{\top}, \rag
\end{align*}
where $\BS^{\top}\BS = \sum_{i=1}^n \BS_i^{\top}\BS_i.$
Thus
\[
\left\|\BZ^{\top}\BS \right\|_F^2  - \frac{1}{n}\|\BDelta\BS\|_F^2 \leq \frac{1}{n}\lag \BS\BS^{\top}\BS\BS^{\top}, \BZ\BZ^{\top} \rag \leq \|\BS\BS^{\top}\|_F^2
\]
where $\BZ\BZ^{\top} = \BJ_n\otimes \I_d \preceq n\I_{nd}.$
\end{proof}

\begin{lemma}\label{lem:hada}
For any matrix $\BX\in\RR^{nd\times nd}$, it holds that
\[
\|\BX\circ \BS\BS^{\top}\|_{\op} \leq \|\BX\|_{\op}
\]
where $\BS\in\RR^{p\times nd}$ and $[\BS\BS^{\top}]_{ii} = \I_d.$ Here $``\circ"$ stands for the Hadamard product of two matrices.
\end{lemma}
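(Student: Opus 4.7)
The plan is to deduce the bound from a Schur-type inequality on the operator norm of a Hadamard product: whenever $\BM$ is positive semidefinite,
\[
\|\BX \circ \BM\|_{\op} \le \Bigl(\max_{i} M_{ii}\Bigr) \|\BX\|_{\op}.
\]
I would then apply this to $\BM = \BS\BS^\top$, which is automatically PSD as a Gram matrix, and observe that the block-level normalization $[\BS\BS^\top]_{ii} = \I_d$ says exactly that every scalar diagonal entry of $\BS\BS^\top$ equals $1$. Hence $\max_i M_{ii} = 1$ and the claimed inequality $\|\BX \circ \BS\BS^\top\|_{\op} \le \|\BX\|_{\op}$ follows at once.

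For a self-contained write-up I would include a short proof of the auxiliary Schur-type bound. Factor $\BM = \BT\BT^\top$, label the rows of $\BT$ by $\bt_i^\top$ so that $M_{ij} = \bt_i^\top \bt_j$, and for arbitrary $\bu, \bw \in \RR^{nd}$ introduce the auxiliary matrices
\[
\tilde{\BU} := \diag(\bu)\,\BT, \qquad \tilde{\BW} := \diag(\bw)\,\BT.
\]
A direct expansion yields
\[
\bu^\top (\BX \circ \BM) \bw \;=\; \sum_{i,j} u_i\,(\bt_i^\top \bt_j)\,X_{ij}\,w_j \;=\; \lag \tilde{\BU},\, \BX \tilde{\BW} \rag,
\]
and Cauchy--Schwarz combined with $\|\BX \tilde{\BW}\|_F \le \|\BX\|_{\op} \|\tilde{\BW}\|_F$ gives the uniform bound $|\bu^\top (\BX \circ \BM) \bw| \le \|\BX\|_{\op} \|\tilde{\BU}\|_F \|\tilde{\BW}\|_F$. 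Since $\|\tilde{\BU}\|_F^2 = \sum_i u_i^2 M_{ii} \le (\max_i M_{ii}) \|\bu\|^2$, and likewise for $\tilde{\BW}$, taking the supremum over unit vectors $\bu, \bw$ delivers the inequality.

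There is no real analytic obstacle here; the only thing to watch is the bookkeeping between the block normalization $[\BS\BS^\top]_{ii} = \I_d$ and the \emph{entrywise} Hadamard product that appears in the bound, which amounts to the simple observation that the scalar diagonal entries $(\BS\BS^\top)_{kk}$ of a matrix whose $d\times d$ diagonal blocks are identities are all equal to one. The result is essentially a piece of linear-algebra plumbing that will be combined with the spectral estimates already developed for $\BDelta$ and $\BS$ in the landscape analysis.
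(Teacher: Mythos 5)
Your proposal is correct and takes essentially the same route as the paper: both rest on the observation that Hadamard multiplication by a rank-one PSD factor $\bt\bt^\top$ is conjugation by $\diag(\bt)$, then run the same operator-norm/Frobenius-norm chain, using $\diag(\BS\BS^\top)=\I_{nd}$ at the final step. The one place your write-up is actually a touch more careful is that you bound the bilinear form $\bu^\top(\BX\circ\BS\BS^\top)\bw$ over independent unit vectors, which is what the operator norm of a possibly non-symmetric matrix requires; the paper's argument bounds only the quadratic form $\bvarphi^\top(\BX\circ\BS\BS^\top)\bvarphi$, which is fine in its application (there $\BX$ is symmetric) but, read literally, does not cover the lemma's ``for any matrix $\BX$'' phrasing without a symmetrization remark.
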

\begin{proof}
Let the $\bu_{\ell}\in\RR^{nd\times 1}$ be the $\ell$th column of $\BS$ where $1\leq \ell\leq p$ and let $\bvarphi\in\RR^{nd\times 1}$ be an arbitrary unit vector. 
\begin{align*}
\bvarphi^{\top}(\BX\circ \BS\BS^{\top})\bvarphi 
& = \sum_{\ell=1}^p \bvarphi^{\top}(\BX\circ \bu_{\ell}\bu_{\ell}^{\top})\bvarphi \\
& = \sum_{\ell=1}^p \bvarphi^{\top}\diag(\bu_{\ell})\BX \diag(\bu_{\ell})\bvarphi \\
& \leq \|\BX\|_{\op} \cdot \sum_{{\ell}=1}^p \|\diag(\bu_{\ell})\bvarphi\|^2 \\
& = \|\BX\|_{\op} \|\diag(\bvarphi) [\bu_1,\cdots,\bu_p] \|_F^2 \\
& = \|\BX\|_{\op} \cdot \| \diag(\bvarphi)\BS\|_F^2 \\
& = \|\BX\|_{\op} \cdot \Tr(\diag(\bvarphi)\BS\BS^{\top}\diag(\bvarphi)) \\
& =  \|\BX\|_{\op} \|\bvarphi\|^2 
\end{align*}
where $\diag(\BS\BS^{\top}) = \I_{nd}.$ Thus we have shown $\|\BX\circ \BS\BS^{\top}\|_{\op} \leq \|\BX\|_{\op}$.
\end{proof}

\begin{proof}[\bf Proof of Proposition~\ref{prop:SZBM}]
Lemma~\ref{lem:BM2nd} and~\ref{lem:BM1st} imply that all the SOCPs of~\eqref{def:BM} satisfy
\begin{align*}
(p-d)\|\BZ^{\top}\BS\|_F^2 
& \geq (p -2d)n^2 d + d\left\|\BZ^{\top}\BS \right\|_F^2  - \frac{d}{n}\|\BDelta\BS\|_F^2  \\
&  + \sum_{i=1}^n\sum_{j=1}^n (\|\BS_i\BS_j^{\top}\|_F^2 - d)\Tr(\BDelta_{ij}) + (p-d)\lag \BDelta, \BZ\BZ^{\top} - \BS\BS^{\top}\rag
\end{align*}
where  $\|\BS\BS^{\top}\|_F^2\geq \left\|\BZ^{\top}\BS \right\|_F^2  - \frac{1}{n}\|\BDelta\BS\|_F^2.$
This is equivalent to
\begin{align*}
(p-2d)(n^2 d - \|\BZ^{\top}\BS\|_F^2) & \leq  \underbrace{\frac{d}{n}\| \BDelta\BS\|_F^2}_{T_1} -  \underbrace{\sum_{i=1}^n\sum_{j=1}^n (\|\BS_i\BS_j^{\top}\|_F^2 - d) \Tr(\BDelta_{ij})}_{T_2} + (p-d)\underbrace{\lag \BDelta, \BS\BS^{\top} - \BZ\BZ^{\top} \rag}_{T_3} \\
& \leq |T_1| + |T_2| + (p-d)|T_3|.
\end{align*}

{\bf Estimation of $|T_1|$ and $|T_3|$:}
For $T_1$, we simply have
\[
|T_1| \leq \frac{d}{n}\|\BDelta\|_{\op}^2 \|\BS\|_F^2 =\frac{d}{n}\cdot \|\BDelta\|_{\op}^2\cdot nd = d^2\|\BDelta\|_{\op}^2.
\]

For $T_3$, we have
\begin{align*}
|T_3| & = |\lag \BDelta, \BS\BS^{\top} - \BZ\BZ^{\top}\rag| \\
& = |\lag \BDelta, (\BS - \BZ\BQ)(\BS + \BZ\BQ)^{\top}\rag| \\
& \leq \|\BDelta\|_{\op} \cdot \|\BS + \BZ\BQ\|_F \cdot \|\BS - \BZ\BQ\|_F \\
& \leq 2\|\BDelta\|_{\op}\sqrt{nd}\cdot d_F(\BS,\BZ).
\end{align*}

{\bf Estimation of $|T_2|$:}
Define a new matrix $\widetilde{\BDelta} : = \Tr_{d}(\BDelta)\otimes \BJ_d$ whose $(i,j)$-entry block is $\Tr(\BDelta_{ij})\BJ_d$ and $\|\widetilde{\BDelta}\|_{\op} = \|\Tr_d(\BDelta)\|_{\op}d$. Note that
\begin{align*}
\sum_{i=1}^n\sum_{j=1}^n (\|\BS_i\BS_j^{\top}\|^2_F- d)\Tr(\BDelta_{ij}) & = \sum_{i=1}^n\sum_{j=1}^n \lag \BS_i\BS_j^{\top}\circ \BS_i\BS_j^{\top} - \I_d, \BJ_d\rag \Tr(\BDelta_{ij}) \\
& = \sum_{i=1}^n\sum_{j=1}^n \lag \BS_i\BS_j^{\top}\circ \BS_i\BS_j^{\top} - \I_d, \widetilde{\BDelta}_{ij} \rag \\
& = \lag \BS\BS^{\top}\circ \BS\BS^{\top}- \BZ\BZ^{\top}\circ \BZ\BZ^{\top}, \widetilde{\BDelta}\rag \\
 & = \lag  (\BS\BS^{\top} - \BZ\BZ^{\top})\circ (\BS\BS^{\top} + \BZ\BZ^{\top}), \widetilde{\BDelta} \rag
\end{align*}
where $(\BS\BS^{\top}\circ \BS\BS^{\top})_{ij} = \BS_i\BS_j^{\top}\circ \BS_i\BS_j^{\top}$, $(\BZ\BZ^{\top}\circ \BZ\BZ^{\top})_{ij} =\I_d$, and $\|\BS_i\BS_j^{\top}\|_F^2 = \lag \BS_i\BS_j^{\top},  \BS_i\BS_j^{\top}\rag  = \lag \BS_i\BS_j^{\top}\circ \BS_i\BS_j^{\top}, \BJ_d\rag$.
As a result, we have
\begin{align*}
 \sum_{i=1}^n\sum_{j=1}^n (\|\BS_i\BS_j^{\top}\|^2_F - d) \Tr(\BDelta_{ij})  
  & = \lag  \BS\BS^{\top} - \BZ\BZ^{\top}, \widetilde{\BDelta} \circ (\BS\BS^{\top} + \BZ\BZ^{\top}) \rag  \\
& = \lag (\BS - \BZ\BQ)(\BS + \BZ\BQ)^{\top}, \widetilde{\BDelta} \circ (\BS\BS^{\top} + \BZ\BZ^{\top}) \rag \\
& \leq \|\BS - \BZ\BQ\|_F \cdot \| \widetilde{\BDelta} \circ (\BS\BS^{\top} + \BZ\BZ^{\top})\|_{\op} \|\BS + \BZ\BQ\|_F\\
& \leq d_F(\BS,\BZ)\cdot 2\sqrt{nd} \| \widetilde{\BDelta} \circ (\BS\BS^{\top} + \BZ\BZ^{\top})\|_{\op}. 
\end{align*}
Now the goal is to get an upper bound of $ \| \widetilde{\BDelta} \circ (\BS\BS^{\top} + \BZ\BZ^{\top})\|_{\op}$. In fact, it holds
\begin{align*}
\| \widetilde{\BDelta} \circ (\BS\BS^{\top} + \BZ\BZ^{\top})\|_{\op} 
& \leq \| \widetilde{\BDelta} \circ \BS\BS^{\top} \|_{\op}  + \| \widetilde{\BDelta} \circ  \BZ\BZ^{\top}\|_{\op} \\
& \leq 2\|\widetilde{\BDelta} \|_{\op} = 2d \|\Tr_{d}(\BDelta)\|_{\op}
\end{align*}
where the second inequality follows from Lemma~\ref{lem:hada} and $\|\widetilde{\BDelta}\|_{\op} = d\|\Tr_d(\BDelta)\|_{\op}$. Therefore,
\begin{align*}
|T_2| & \leq d_F(\BS,\BZ)\cdot 2\sqrt{nd} \| \widetilde{\BDelta} \circ (\BS\BS^{\top} + \BZ\BZ^{\top})\|_{\op} \\
& \leq d_F(\BS,\BZ) \cdot 2\sqrt{nd} \cdot 2d \|\Tr_d(\BDelta)\|_{\op} \\
& = 4d\sqrt{nd}  \|\Tr_d(\BDelta)\|_{\op}d_F(\BS,\BZ)
\end{align*}

\vskip0.25cm

Now we wrap up the calculations:
\begin{align*}
(p-2d)(n^2d - \|\BZ^{\top}\BS\|_F^2) & \leq d^2\|\BDelta\|_{\op}^2 +  4d\sqrt{nd}  \|\Tr_d(\BDelta)\|_{\op}d_F(\BS,\BZ) + 2(p-d)\|\BDelta\|_{\op}\sqrt{nd}\cdot d_F(\BS,\BZ) \\
& \leq d^2\|\BDelta\|_{\op}^2 + 2\sqrt{nd}(p-d+ 2\gamma d) \|\BDelta\|_{\op} \cdot d_F(\BS,\BZ)
\end{align*}
where $\gamma = \frac{\|\Tr(\BDelta)\|_{\op}}{\|\BDelta\|_{\op}} \vee 1$ is defined in~\eqref{def:gamma}.
Note that
$n^2 d - \|\BZ^{\top}\BS\|_F^2 \geq 2^{-1} nd_F^2(\BS,\BZ)$ in~\eqref{eq:df2}.
Thus for $p > 2d$, we have
\[
\frac{nd_F^2(\BS,\BZ)}{2} \leq \frac{2\sqrt{nd}(p-d+ 2\gamma d) \|\BDelta\|_{\op}}{p-2d} \cdot d_F(\BS,\BZ) + \frac{d^2}{p-2d}\|\BDelta\|_{\op}^2
\]
and equivalently
\[
d_F^2(\BS,\BZ) \leq \frac{4(p-d+ 2\gamma d) }{p-2d} \cdot \sqrt{\frac{d}{n}}\|\BDelta\|_{\op} \cdot d_F(\BS,\BZ) + \frac{2d}{p-2d}\cdot \frac{d}{n}\|\BDelta\|_{\op}^2.
\]
As a result, we have
\begin{align*}
d_F(\BS,\BZ) & \leq  \left( \frac{2(p-d+ 2\gamma d)}{p-2d}  +\sqrt{4\left(\frac{p-d+ 2\gamma d}{p-2d}\right)^2 + \frac{2d}{p-2d}}~\right)\sqrt{\frac{d}{n}}\|\BDelta\|_{\op}\\
& \leq  \frac{(2+\sqrt{5})(p-d+ 2\gamma d)}{p-2d} \sqrt{\frac{d}{n}}\|\BDelta\|_{\op} \leq \frac{(2+\sqrt{5})(p+ d)\gamma}{p-2d}  \sqrt{\frac{d}{n}}\|\BDelta\|_{\op}
\end{align*}
where $(p-d+2\gamma d)^2\geq (p+d)^2\geq 2d(p-2d)$ holds for $p \geq 2d + 1$ and $\gamma \geq 1.$
\end{proof}

\subsection{Proof of Theorem~\ref{thm:CVX_Gaussian} and~\ref{thm:BM_Gaussian}}
Proposition~\ref{prop:keys} implies that it suffices to prove
\begin{equation}\label{cond:suff}
n 
\geq  \frac{3\delta^2d}{2n}\|\BDelta\|^2_{\op} + \delta\sqrt{\frac{d}{n}} \|\BDelta\|_{\op}^2 + \max_{1\leq i\leq n}\left\|\BDelta_{i}^{\top} \BZ \right\|_{\op} + \|\BDelta\|_{\op}
\end{equation}
where $\max_{1\leq i\leq n}\|\BDelta_i\|_{\op} \leq \|\BDelta\|_{\op}.$
Now we will estimate
$\|\BDelta\|_{\op}$ and $\max_{1\leq i\leq n}\|\BDelta_i^{\top} \BZ\|_{\op}$ for $\BDelta = \sigma\BW$ where $\BW$ is an $nd\times nd$ symmetric Gaussian random matrix.

\begin{proof}[\bf Proof of Theorem~\ref{thm:CVX_Gaussian} and~\ref{thm:BM_Gaussian}]
The proof is straightforward: to show that~\eqref{cond:suff} holds for some $\delta$ in both convex and nonconvex cases. 
If $\BDelta = \sigma\BW$ where $\BW$ is a Gaussian random matrix, it holds that
\[
\|\BDelta\|_{\op} \leq 3\sigma\sqrt{nd}
\]
with high probability at least $1 - e^{-nd/2}$ according to~\cite[Proposition 3.3]{BBS17}. For $\BDelta_i^{\top} \BZ$, we have
\[
\BDelta_i^{\top}  \BZ= \sigma\sum_{j\neq i} \BW_{ij}\in\RR^{d\times d}, \quad (\BDelta_i^{\top}  \BZ)_{k\ell} \overset{\text{i.i.d.}}{\sim}\sigma\mathcal{N}(0, n-1), \quad 1\leq k,\ell\leq d.
\]
Theorem 4.4.5 in~\cite{V18} implies that the Gaussian matrix $\BDelta_i^{\top}\BZ$ is bounded by
\[
\| \BDelta_i^{\top}\BZ \|_{\op} \leq C_2\sigma\sqrt{n}(\sqrt{d} + \sqrt{2\log n})
\]
with probability at least $1 - 2n^{-2}$. By taking the union bound over all $1\leq i\leq n$, we have
\[
\max_{1\leq i\leq n}\| \BDelta_i^{\top} \BZ \|_{\op} \leq C_2\sigma\sqrt{n}(\sqrt{d} + \sqrt{2\log n})
\]
with probability at least $1 - 2n^{-1}$.
In the convex relaxation, we have $\delta = 4$. Then  the right hand of~\eqref{cond:suff} is bounded by
\[
C_1\left( \left( \frac{3\delta^2d}{2n} + \delta\sqrt{\frac{d}{n}}\right) 9\sigma^2 nd  + C_2\sigma\sqrt{n}(\sqrt{d} + \sqrt{2\log n}) + 3\sigma\sqrt{nd}\right)
\]
where $\delta = 4.$ The leading term is of order $\sigma^2 \sqrt{n}d^{3/2}$ and thus $\sigma < C_0 n^{1/4}d^{-3/4}$ guarantees the tightness of SDP.

For Burer-Monteiro approach, it suffices to estimate $\gamma$ in~\eqref{def:gamma}.
The partial trace  $\Tr_d(\BDelta)$ is essentially equal to $\sigma\sqrt{d} \BW_{GOE,n}$, which implies
\[
\|\Tr_d(\BDelta)\|_{\op} \leq 3\sigma\sqrt{nd}
\]
with probability at least $1 - e^{-n/2}$ and 
\[
\delta \|\BDelta\|_{\op} \leq  \frac{(2+\sqrt{5})(p+d)}{(p-2d)} \max\{ \|\BDelta\|_{\op}, \|\Tr_d(\BDelta)\|_{\op}\} \lesssim  \frac{p+d}{p-2d}\cdot \sigma \sqrt{nd}.
\]
where $\delta= (2+ \sqrt{5})(p+d)(p-2d)^{-1}\gamma.$

Thus the right hand of~\eqref{cond:suff} is bounded by
\[
C_1'\left(  \frac{d}{n}\cdot \left(\frac{p+d}{p-2d}\right)^2\cdot \sigma^2 nd + \sqrt{\frac{d}{n}}  \cdot  \frac{p+d}{p-2d}\cdot \sigma^2nd  + \sigma\sqrt{n}(\sqrt{d} + \sqrt{2\log n}) + \sigma\sqrt{nd}\right)
\]
for some universal constant $C_1'.$ The leading order term is $\sigma^2(p-2d)^{-1}(p+d)d\sqrt{nd} $
which implies that~\eqref{cond:suff} holds if
\[
\sigma^2 < \frac{C_0 n(p-2d)}{d\sqrt{nd}(p+d)} = \frac{C_0n^{1/2}(p-2d)}{d^{3/2}(p+d)}
\]
for some small constant $C_0$. This means $\sigma^2 < C_0 n^{1/2}(p-2d)d^{-3/2}(p+d)^{-1}$ ensures  that the optimization landscape of~\eqref{def:BM} is benign.
\end{proof}


\end{document}